\documentclass[12pt]{amsart}

\usepackage{amsmath}
\usepackage{amssymb}
\usepackage{amsfonts}
\usepackage{amsthm}
\usepackage{enumerate}
\usepackage{color}
\usepackage{psfrag}
\usepackage{mathrsfs}
\usepackage{amscd}
\usepackage[hyperindex,backref=page]{hyperref}
\usepackage{graphicx,color}
\usepackage{tikz} 

\newtheorem{thm}{Theorem}[section]
\newtheorem{cor}[thm]{Corollary}
\newtheorem{lemma}[thm]{Lemma}
\newtheorem{prop}[thm]{Proposition}

\newtheorem{conj}[thm]{Conjecture}

\theoremstyle{definition}
\newtheorem{defi}[thm]{Definition}

\theoremstyle{remark}

\DeclareMathOperator{\link}{link}

\DeclareMathOperator{\dist}{dist}
\DeclareMathOperator{\st}{star}
\DeclareMathOperator{\depth}{depth}
\DeclareMathOperator{\pd}{pd}
\DeclareMathOperator{\height}{ht}

\DeclareMathOperator{\supp}{supp}

\DeclareMathOperator{\Min}{Min}

\DeclareMathOperator{\lcm}{lcm}

\DeclareMathOperator{\bou}{bou}
\DeclareMathOperator{\bc}{bc}

\begin{document}


\title[Symbolic depth functions of edge ideals via local graph structure]{Symbolic depth functions of edge ideals via local graph structure}

\author{Yuji Muta and Naoki Terai}

\address[Y. Muta]{Department of Mathematics, Okayama University, 3-1-1 Tsushima-naka, Kita-ku, Okayama 700-8530 Japan.}

\email{p8w80ole@s.okayama-u.ac.jp}

\address[N. Terai]{Department of Mathematics, Okayama University, 3-1-1 Tsushima-naka, Kita-ku, Okayama 700-8530, Japan.}
	
\email{terai@okayama-u.ac.jp}
\begin{abstract}
We study the depth of symbolic powers of edge ideals, focusing on local properties of graphs. First, we give several sufficient conditions for the symbolic depth function to be non-increasing, using local properties of graphs and local cohomology. Next, we give a counterexample to a conjecture of Minh, T.N. Trung, and Vu concerning the stable value of the symbolic depth function, where the difference between the conjectured value and the actual value can be arbitrary large. Finally, we study a conjecture of Muta stating the depth of a symbolic power of an edge ideal is at least that of the corresponding ordinary power. We prove it for graphs whose connected components are unicyclic or bipartite. 
\end{abstract}


\subjclass[2020]{Primary: 13F55, 13H10, 05C75; Secondary: 13D45, 05C90, 55U10}


\keywords{Edge ideals, depth, symbolic powers, ordinary power, local cohomology.}


\dedicatory{Dedicated to Professor Rafael H. Villarreal on the occasion of his 75th birthday}


\thanks{}
	

\maketitle

\section{Introduction}

The study of symbolic powers of a squarefree monomial ideal is an important topic in combinatorial commutative algebra. Let us recall the definition of symbolic powers. Let $K$ be a field, $S=K[x_{1},\ldots, x_{n}]$ be the polynomial ring in $n$ variables over $K$ and let $\mathfrak{m}$ be the unique homogeneous maximal ideal of $S$. Let $I$ be a squarefree monomial ideal of $S$. Then, for an integer $k\geq1$, the $k$-{\it th symbolic power }of $I$, denoted by $I^{(k)}$, is defined by 
$$I^{(k)}=\bigcap_{P \in \Min(I)} \big(I^k S_P \cap S\big).$$
If $\Min(I)=\{P_{1},\ldots, P_{r}\}$, then it is known that $I^{(k)}=P_{1}^{k}\cap\cdots\cap P_{r}^{k}$. 

The depth of symbolic powers of $I$ has been studied by many authors. It follows from a result of Hoa and T.N. Trung \cite[Theorem 4.7]{ht2010} that the function $k\mapsto\depth S/I^{(k)}$ is constant for all sufficiently large $k$. Varbaro \cite[Proposition 2.4]{v2011} proved that $$\min_{k\geq1}\depth S/I^{(k)}=n-\ell_{s}(I),$$
where $\ell_{s}(I)$ is the symbolic analytic spread of $I$, that is, $\ell_{s}(I)=\dim \bigoplus_{k}I^{(k)}/\mathfrak{m}I^{(k)}$. 
Later, Hoa, Kimura, Terai, and T.N. Trung proved that this minimum value is also the limit of the symbolic depth function. 

A natural problem is to determine when the symbolic depth function is non-increasing. Hoa, Kimura, Terai, and T.N. Trung \cite[Theorem 3.2]{hktt2017} proved that this property holds for the cover ideals, that is, the unmixed squarefree monomial ideals of height two. However, for general squarefree monomial ideals, the situation is different. In \cite[Theorem 2.8]{nt2019}, Nguyen and N.V. Trung showed that the symbolic depth function of squarefree monomial ideals need not be non-increasing. Later, the authors of this paper \cite[Theorem 4.17]{mt2025} gave an example of a pure shellable simplicial complex whose second symbolic powers is Cohen--Macaulay, but whose symbolic depth function is not non-increasing. More recently, Ficarra, Moradi, and Muta \cite[Proposition 5.5]{fmm2026Rees} provided an example of a complementary edge ideal whose symbolic depth function is not non-increasing. Here, complementary edge ideals are the squarefree monomial ideals generated by squarefree monomials of degree $n-2$. Moreover, in another paper \cite{fmm2026reg-depth}, they gave a more precise description of the symbolic depth function of complementary edge ideals.

On the other hand, it is still unknown whether the symbolic depth function of an edge ideal is always non-increasing. This problem was raised by Kimura, Terai, and Yassemi \cite[Problem 1.2]{kty2018} and was also stated by Nguyen and N.V. Trung \cite[Question 2.9]{nt2019}. 

Throughout, we assume that graphs always mean finite simple graphs. For a graph $G$ on the vertex set $[n]$, the {\it edge ideal }of $G$ is the squarefree monomial ideal generated by monomials corresponding to the edges, that is, $$I(G)=(x_{i}x_{j}\,\,:\{i,j\}\in E(G))\subset S.$$ 
Edge ideals of graphs were introduced by Villarreal in 1990 \cite{v1990} and have been extensively studied in combinatorial commutative algebra (see, for example, \cite{mv2012}). 

In this paper, we focus on the following conjecture. 

\begin{conj}[{\normalfont\cite[Problem 1.2]{kty2018} or \cite[Question 2.9]{nt2019}}]\label{conj of non-increasing}
Let $G$ be a graph. Then we have 
$$\depth S/I(G)^{(k)}\geq\depth S/I(G)^{(k+1)}\mbox{ for all }k\geq1.$$
\end{conj}

Several positive results are known. Kimura, Terai, and Yassemi \cite{kty2018} proved that 
Conjecture \ref{conj of non-increasing} holds if $G$ is very well-covered or has a leaf. In \cite[Theorem 3.5]{hlt2024}, Hien, Lam, and N.V. Trung studied the behavior of symbolic depth functions and proved that, for a connected graph, once the depth of a symbolic powers becomes one, it stays one for all higher symbolic powers. Minh, T.N. Trung, and Vu \cite{mtv2024} also studied the stable value of the symbolic depth function. 

The results for graphs with a leaf leads to the main problem of our paper. How much can the symbolic depth function be controlled by local information about a graph? In particular, can we find other local conditions that gives the non-increasing property, even when the graph has no leaf? More generally, can local information give inequalities between symbolic powers whose exponents differ by more than one?

We first recall a useful criterion based on minimal vertex covers. It is closely related to \cite[Remark 3.3]{sf2017}, where a similar colon identity was used to study Stanley depth. The same idea, together with a depth inequality for colon ideals, gives the following comparison between the depths of symbolic powers.

\begin{thm}[see, Theorem \ref{decreasing}]
Let $G$ be a  graph and $A\subset V(G)$, $t\in\mathbb{N}$.
If $|C\cap A|=t$ for any minimal vertex cover $C$ of $G$, then, we have 
$$\depth S/I(G)^{(k)}\geq\depth S/I(G)^{(k+t)}\mbox{ for all }k\geq1.$$
\end{thm}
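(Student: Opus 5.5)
The plan is to realize $I(G)^{(k)}$ as a colon ideal of $I(G)^{(k+t)}$ by a single monomial, and then apply the standard depth inequality for colon ideals: for a monomial ideal $I$ and a monomial $f$, one has $\depth S/(I:f)\geq \depth S/I$. This inequality is known from Rauf's work; it can also be obtained from Takayama's formula or by polarization and restriction of degree complexes.

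Set $u=\prod_{i\in A}x_i$. The key step is the colon identity
$$I(G)^{(k+t)}:u = I(G)^{(k)}.$$
To prove it, recall that $I(G)^{(m)}=\bigcap_{C} P_C^{m}$, where $C$ runs over the minimal vertex covers of $G$ and $P_C=(x_i : i\in C)$. Since colon commutes with intersection, it suffices to compute $P_C^{k+t}:u$ for each minimal vertex cover $C$.

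Write $u=u_C\cdot u'$, where $u_C=\prod_{i\in C\cap A}x_i$ and $u'$ involves only variables outside $C$. The variables of $u'$ are nonzerodivisors modulo the $P_C$-primary ideal $P_C^{k+t}$, so
$$P_C^{k+t}:u=P_C^{k+t}:u_C.$$
Now $u_C$ is a monomial of degree $|C\cap A|=t$ in the variables of $P_C$. Hence $P_C^{k+t}:u_C=P_C^{k}$. This is a direct monomial check: a monomial $m$ satisfies $mu_C\in P_C^{k+t}$ if and only if the $C$-degree of $m$ is at least $k$, because $u_C$ contributes exactly $t$ to the $C$-degree.

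Intersecting over all minimal vertex covers $C$ gives the identity. The hypothesis $|C\cap A|=t$ for every $C$ is exactly what makes the exponent drop uniformly by $t$. Combining the identity with the colon inequality yields
$$\depth S/I(G)^{(k)}=\depth S/\bigl(I(G)^{(k+t)}:u\bigr)\geq \depth S/I(G)^{(k+t)}.$$

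I expect the colon identity to be routine. The main obstacle is justifying the depth inequality for colon ideals, in case it is not quoted earlier in the paper.

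If a self-contained argument is needed, I would reduce to $f=x_i$ by induction on $\deg f$, using $(I:fg)=((I:f):g)$. Then I would use the degree-complex description of local cohomology (Takayama's formula). The $\mathbb{Z}^n$-graded pieces of $H^i_{\mathfrak m}(S/(I:x_i))$ correspond to graded pieces of $H^i_{\mathfrak m}(S/I)$ shifted by one in the $i$-th coordinate, for degrees with $a_i\geq 0$. For $a_i<0$, the degree complexes agree after shifting to $a_i=0$ or to a degree for which the $x_i$ direction is already free. This gives the vanishing of $H^j_{\mathfrak m}(S/(I:x_i))$ for $j<\depth S/I$.
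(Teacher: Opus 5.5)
Your proposal is correct and follows essentially the same route as the paper: prove the colon identity $I(G)^{(k+t)}:\prod_{i\in A}x_i=I(G)^{(k)}$ prime by prime on the $P_C$, then apply Rauf's inequality $\depth S/(I:f)\geq\depth S/I$ for a monomial $f\notin I$, which the paper simply cites as \cite[Corollary 1.3]{r2010}. The hypothesis $f\notin I(G)^{(k+t)}$ needed for that inequality holds automatically, since the colon equals the proper ideal $I(G)^{(k)}$ for $k\geq1$.
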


Although this result gives a general condition, it is stated in terms of all minimal vertex covers of $G$. We therefore study how this condition can be obtained from the structure of an induced subgraph. In particular, we give sufficient conditions involving complete multipartite graphs and very well-covered graphs (see, Corollaries \ref{$t$-partite} and \ref{very well}). 

The following corollary extends the depth inequality for graphs with a leaf to graphs with a simplicial vertex.

\begin{cor}[see, Corollary \ref{simplicial}] 
Let $G$ be a graph. Suppose that $G$ has a simplicial vertex of degree $p$. Then we have
$$\depth S/I(G)^{(k)}\geq\depth S/I(G)^{(k+p)}\mbox{ for all }k\geq1$$
\end{cor}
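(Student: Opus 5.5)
We will derive this directly from the preceding theorem (Theorem \ref{decreasing}) by choosing the set $A$ to be the closed neighborhood of the simplicial vertex. Let $v$ be a simplicial vertex of $G$ of degree $p$, and put $A=N_G[v]=\{v\}\cup N_G(v)$. Since $v$ is simplicial, $A$ induces a complete graph $K_{p+1}$ on $p+1$ vertices. The plan is to show that every minimal vertex cover $C$ of $G$ satisfies $|C\cap A|=p$. Theorem \ref{decreasing} with $t=p$ then gives $\depth S/I(G)^{(k)}\geq\depth S/I(G)^{(k+p)}$ for all $k\geq 1$.

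The lower bound is immediate. A vertex cover of $G$ restricts to a vertex cover of the induced clique $G[A]\cong K_{p+1}$. A vertex cover of $K_{p+1}$ must omit at most one vertex, since two omitted vertices would leave an uncovered edge. Hence $|C\cap A|\geq p$.

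For the upper bound, we argue by contradiction. Suppose $|C\cap A|=p+1$, so $A\subseteq C$; in particular $v\in C$. We claim that $C\setminus\{v\}$ is still a vertex cover. The edges of $G$ containing $v$ are exactly $\{v,u\}$ with $u\in N_G(v)\subseteq C$, and each of these is covered by $u$. All other edges are untouched by removing $v$. This contradicts the minimality of $C$, so $|C\cap A|\le p$, and therefore $|C\cap A|=p$ for every minimal vertex cover $C$.

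There is no real obstacle here; the only step needing care is the upper bound, which uses that all neighbors of $v$ lie in $A$. The degenerate case $p=0$ also needs a check. There $v$ is isolated and $A=\{v\}$, which no minimal vertex cover contains, so $t=0$ and the inequality is trivial. The case $p=1$, where $v$ is a leaf, recovers the known result of Kimura, Terai, and Yassemi.
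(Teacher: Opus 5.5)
Your proposal is correct and matches the paper's proof essentially step for step. You take $A=N[v]$, get $|C\cap N[v]|\geq p$ because $N[v]$ spans a clique, rule out $N[v]\subseteq C$ by removing $v$, and apply Theorem \ref{decreasing} with $t=p$, treating $p=0$ separately as trivial.
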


We next investigate the non-increasing property from the viewpoint of local cohomology. We consider an edge $\{u,v\}$ of $G$ such that $N(u)\setminus\{v\}$ is a clique. Under this condition, we prove that, for every $k\geq1$ and $i\geq0$, $H_{\mathfrak m}^{i}(S/I(G)^{(k)})_{\bf a}\neq0$ implies that $H_{\mathfrak m}^{i}(S/I(G)^{(k+1)})\neq0$ provided that the multi-degree ${\bf a}$ satisfies certain conditions (see, Theorem \ref{non-vanishing}). As an application, we obtain the following sufficient condition for the non-increasing property as follows. 

\begin{cor}[see, Corollary \ref{deg2}]
Let $G$ be a graph. If there exists an edge $\{u,v\}\in E(G)$ with $\deg u=\deg v=2$, then we  have $$\depth S/I(G)^{(k)}\geq\depth S/I(G)^{(k+1)}\mbox{ for all }k\geq1.$$
\end{cor}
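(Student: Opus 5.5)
Let $\{u,v\}$ be the edge with $\deg u=\deg v=2$, and write $N(u)=\{v,a\}$ and $N(v)=\{u,b\}$. If $a=b$, the vertices $u,v,a$ form a triangle. Then $u$ is a simplicial vertex of degree $2$, since $N(u)=\{v,a\}$ is an edge. This case is only partly covered by the simplicial-vertex corollary (Corollary \ref{simplicial}), which gives the comparison with $k+2$ rather than $k+1$. So I would treat all cases uniformly with the local cohomology result, Theorem \ref{non-vanishing}. Its hypothesis only asks for an edge $\{u,v\}$ with $N(u)\setminus\{v\}$ a clique. Here $N(u)\setminus\{v\}=\{a\}$ is a single vertex, hence trivially a clique, and by symmetry the same holds with $u$ and $v$ swapped.

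The plan is this. Let $d=\depth S/I(G)^{(k+1)}$. Then $H_{\mathfrak m}^{d}(S/I(G)^{(k+1)})\neq0$, and we want to show $\depth S/I(G)^{(k)}\ge d$. Equivalently, we show that whenever $H_{\mathfrak m}^{i}(S/I(G)^{(k)})\neq0$, the module $H_{\mathfrak m}^{i}(S/I(G)^{(k+1)})$ is also nonzero. Apply this with $i=\depth S/I(G)^{(k)}$: it gives $\depth S/I(G)^{(k+1)}\le \depth S/I(G)^{(k)}$. So pick a multidegree $\mathbf a$ with $H_{\mathfrak m}^{i}(S/I(G)^{(k)})_{\mathbf a}\neq0$. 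We must check that $\mathbf a$ can be chosen to satisfy the conditions of Theorem \ref{non-vanishing}, using either $u$ or $v$ as the distinguished vertex.

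To choose $\mathbf a$, I would use Takayama's formula. The multidegrees carrying nonzero local cohomology are described by degree complexes $\Delta_{\mathbf a}(I^{(k)})$. We may take $\mathbf a$ with $a_j\ge0$ on $G_{\mathbf a}=\{j: a_j<0\}^c$, with negative coordinates equal to $-1$, and the nonzero part is governed by the link and stable-star structure. The conditions in Theorem \ref{non-vanishing} presumably restrict $a_u$, $a_v$ (for example $a_u\ge0$, or $u\notin$ the negative support, or a bound on $a_u+a_a$). The degree-$2$ hypothesis at both ends gives two candidate distinguished vertices, $u$ with neighbour $a$ and $v$ with neighbour $b$. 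I expect a short case analysis on the signs of $a_u,a_v$ to show that at least one orientation satisfies the hypothesis. For instance, if $a_u<0$, then $u$ lies in every face of the relevant complex or is deleted. One then switches to $v$, or shifts $\mathbf a$ along $u$ using the known fact that $H^i$ is isomorphic across degrees in the same chamber.

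The main obstacle is this last step: handling the multidegrees that violate the hypothesis of Theorem \ref{non-vanishing} for $u$. I would first try to show that the offending degrees yield isomorphic degree complexes to some admissible degree. The natural candidate is obtained by replacing $a_u$ with $0$ or moving the excess to $a_v$. Verifying this needs only that every minimal vertex cover contains $u$ or $v$ and one of $\{a,u\}$, $\{v,b\}$, which is where $\deg u=\deg v=2$ is used.
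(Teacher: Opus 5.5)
Your overall frame matches the paper's: take $i=\depth S/I(G)^{(k)}$ and $\mathbf a$ with $H^i_{\mathfrak m}(S/I(G)^{(k)})_{\mathbf a}\neq0$, then push the non-vanishing to $I(G)^{(k+1)}$. However, the step that makes this work is missing, and the route you sketch for it fails at two points.

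\textbf{Wrong edge for Theorem \ref{non-vanishing}.} The theorem requires $a_u\ge0$, $a_v\ge0$ and $0\le a_w\le a_u$ for $w\in N(u)\setminus\{v\}$. You apply it to the edge $\{u,v\}$ with clique $N(u)\setminus\{v\}=\{a\}$. Then you need $a_a\le a_u$, or $a_b\le a_v$ in the swapped orientation. These inequalities involve the coordinates of the outer neighbours, which you do not control, so both can fail at once (e.g.\ $a_a>a_u$ and $a_b>a_v$).

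Your proposed repair, shifting $\mathbf a$ to a degree with an isomorphic degree complex, is unjustified. $\Delta_{\mathbf a}(I(G)^{(k)})$ is generated by the sets $A\setminus\supp_{-}\mathbf a$, where $A$ runs over maximal independent sets containing $\supp_{-}\mathbf a$ with $\sum_{j\notin A}a_j\le k-1$. Changing $a_u$ changes this sum for every $A$ avoiding $u$, so the complex generally changes.

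The fix is to use the edge $\{u,a\}$ instead. Then $N(u)\setminus\{a\}=\{v\}$ is the one-vertex clique, and the required inequality becomes $0\le a_v\le a_u$. You can force this by choosing, among the two degree-$2$ vertices, the one with the larger coordinate. This is Corollary \ref{clique} with $Q=\{u,v\}$, and it is where $\deg u=\deg v=2$ is really used. Your remark that the degree hypothesis enters through ``every minimal vertex cover contains $u$ or $v$ and one of $\{a,u\}$, $\{v,b\}$'' does not use it at all, since that holds for any edges.

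\textbf{Negative coordinates.} Theorem \ref{non-vanishing} needs nonnegativity at both endpoints of the chosen edge and on the clique. So if $a_u<0$, ``switching to $v$'' does not help. The missing ingredient is Lemma \ref{negative-coord}: if $a_z<0$ and $\{z,w\}\in E(G)$, then $w$ lies outside every maximal independent set containing $\supp_{-}\mathbf a$. Hence each relevant sum rises by exactly one, $\Delta_{\mathbf a+\mathbf e_w}(I(G)^{(k+1)})=\Delta_{\mathbf a}(I(G)^{(k)})$, and $H^i_{\mathfrak m}(S/I(G)^{(k+1)})_{\mathbf a+\mathbf e_w}\neq0$ directly.

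The complete argument runs as follows. If one of $a_u,a_v$ is negative, apply the lemma across the edge $\{u,v\}$. Otherwise assume $a_u\ge a_v\ge0$. If $a_a<0$, apply the lemma across the edge $\{a,u\}$. In the remaining case, apply Theorem \ref{non-vanishing} to the edge $\{u,a\}$ with clique $\{v\}$. This also covers the triangle case $a=b$, so no appeal to Corollary \ref{simplicial} is needed.
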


As a further application of Theorem \ref{non-vanishing}, in Corollary \ref{clique-corona}, we also obtain the non-increasing property for the multi-clique corona graphs introduced in \cite[Definition 5.1]{mt2026}. 

In the rest of Section \ref{sec3}, we discuss the following conjecture, proposed by Minh, T.N. Trung, and Vu in \cite{mtv2024}, relating the stable value of the symbolic depth function to the restricted bipartite connectivity number $\bc^{\prime}(G)$ of a graph $G$. 

\begin{conj}[\cite{mtv2024}, Conjecture 1.4]\label{conj for lim intro}
Let $G$ be a graph. Then we have 
$$\lim_{k\rightarrow\infty}\depth S/I(G)^{(k)}=\bc^{\prime}(G).$$
\end{conj}

The following theorem gives a counterexample to Conjecture \ref{conj for lim intro}. In these examples, $\bc^{\prime}(G)$ is larger than the stable value of the symbolic depth function, and the difference can be arbitrary large. 

\begin{thm}[see, Theorem \ref{counterexample}] 
Let $p\geq2$, and let $G=W(K_{p,p,p})$ be the whisker graph of the complete multipartite graph $K_{p,p,p}$. Then we have 
$$\lim_{k\rightarrow\infty}\depth S/I(G)^{(k)}\leq 2<p+1=\bc^{\prime}(G).$$
\end{thm}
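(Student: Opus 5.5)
The plan has two independent halves. One is the upper bound $\lim_{k}\depth S/I(G)^{(k)}\le 2$. The other is the identity $\bc^{\prime}(G)=p+1$.

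For the limit, I will use Varbaro's formula quoted in the introduction, $\lim_k \depth S/I(G)^{(k)}=\min_k\depth S/I(G)^{(k)}=n-\ell_s(I(G))$, which holds because the limit equals the minimum (Hoa, Kimura, Terai, T.N. Trung). Here $n=6p$: the graph $K_{p,p,p}$ has $3p$ vertices and there are $3p$ whisker vertices. So it suffices to show $\ell_s(I(G))\ge 6p-2$. A more direct alternative would be to exhibit, for large $k$, a degree ${\bf a}$ with $H^{i}_{\mathfrak m}(S/I(G)^{(k)})_{\bf a}\neq 0$ for some $i\le 2$ via Takayama's formula. I will first try the analytic spread route.

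The symbolic Rees algebra of an edge ideal is generated by monomials attached to the vertex cover polytope. Hence $\ell_s$ is one more than the dimension of the maximal face of the symbolic polyhedron $\{{\bf x}\ge 0: x_i+x_j\ge 1 \text{ for } ij\in E\}$ that lies in a hyperplane $\sum x_i=\text{const}$ minimizing the total sum, or equivalently the dimension of the minimal-weight face of the fractional cover polytope. Whiskers force $x_v+x_{v'}\ge1$ for each whisker edge $vv'$, so the minimum of $\sum x_i$ is $3p$. This minimum is attained by every cover that takes exactly one endpoint of each whisker and still covers $K_{p,p,p}$, and also by fractional points such as $x\equiv\tfrac12$. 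I will compute the dimension of this optimal face $F=\{x: x_v+x_{v'}=1 \ \forall v,\ x_u+x_w\ge1 \text{ on edges of } K_{p,p,p}\}$. Its affine hull has dimension $3p$ at least, since the point $\tfrac12\cdot\mathbf 1$ is relatively interior with respect to the edge constraints of $K_{p,p,p}$ and these constraints are non-tight there. This gives $\ell_s\ge 3p+1$, which is too weak.

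So I will instead pick the appropriate face. The correct object is the maximal dimension of a face of the symbolic polyhedron on which the linear functional is bounded, which is governed by the fan of minimal covers. I expect the intended argument to use the colon and vertex-cover criterion of Theorem~\ref{decreasing}, or a direct localization and degree computation. The key combinatorial fact is that every minimal vertex cover of $K_{p,p,p}$ omits exactly one part. I would therefore localize at the variables of one whole part together with the whiskers of the others. This reduces to a graph whose symbolic powers have depth computable by the leaf and whisker results of \cite{kty2018}. I would then show, via Takayama's degree complexes, that $H^2_{\mathfrak m}(S/I^{(k)})\neq0$ for large $k$. I expect this Takayama computation to be the main obstacle. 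Concretely, I would choose ${\bf a}$ negative on two whisker vertices from different parts and large positive on their neighbours. Then I would check that the degree complex is disconnected in dimension $0$, i.e. has nonvanishing $\tilde H_{1}$-type homology, giving depth at most $2$.

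For $\bc^{\prime}(G)=p+1$, I will unwind the definition from \cite{mtv2024}: a maximum, over suitable vertex subsets and bipartite connected component structure, of the number of bipartite components. I will take the independent set consisting of one part $V_1$ of size $p$ together with the whiskers of $V_2\cup V_3$. Deleting the appropriate set leaves $p$ isolated whisker edges plus one bipartite component, which gives $p+1$. For the reverse inequality I will use that any admissible configuration must omit at least two parts' worth of structure. This is a routine case check.
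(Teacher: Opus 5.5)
Your proof of $\lim_{k}\depth S/I(G)^{(k)}\le 2$ has a genuine gap: none of the routes you sketch produces a nonvanishing local cohomology class in degree $\le 2$.

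\textbf{The analytic-spread route.} It uses the wrong polyhedron. The set $\{{\bf x}\ge 0: x_i+x_j\ge 1 \text{ for } \{i,j\}\in E(G)\}$ is the symbolic polyhedron of the cover ideal $J(G)=\bigcap_{\{i,j\}\in E(G)}(x_i,x_j)$. The symbolic polyhedron of $I(G)$ is instead cut out by $\sum_{j\in C}x_j\ge 1$ over the minimal vertex covers $C$. So your bound $\ell_s\ge 3p+1$ says nothing about $I(G)$.

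\textbf{The localization step.} It carries no information. Inverting $X_1\cup Y_2\cup Y_3$, a maximal independent set, leaves a single prime power.

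\textbf{The Takayama plan.} It cannot work as stated. If $|\supp_{-}{\bf a}|=2$, then $H^2_{\mathfrak m}(S/I(G)^{(k)})_{\bf a}\cong\widetilde{H}_{-1}(\Delta_{\bf a}(I(G)^{(k)});K)$, which is nonzero only if $\Delta_{\bf a}=\{\emptyset\}$. The facets of $\Delta_{\bf a}(I(G)^{(k)})$ are the sets $A\setminus\supp_{-}{\bf a}$, where $A\supseteq\supp_{-}{\bf a}$ is a maximal independent set with $\sum_{j\notin A}a_j\le k-1$. Every maximal independent set of $G$ has exactly $3p>2$ elements, so $\Delta_{\bf a}=\{\emptyset\}$ never occurs. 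A disconnected degree complex at such an ${\bf a}$ would only give $H^3\neq 0$. Note also that disconnectedness is $\widetilde{H}_0$, not $\widetilde{H}_1$. Putting large weights on $x_{1,1},x_{2,1}$ changes nothing, since these vertices lie outside every admissible $A$.

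\textbf{The missing idea.} Take ${\bf a}$ with no negative entries: $a_{x_{i,j}}=1$, $a_{y_{i,j}}=0$, and $k=2p+1$.
\begin{itemize}
\item Maximal independent sets have the form $A\cup\{y_{i,j}: x_{i,j}\notin A\}$ with $A$ independent in $K_{p,p,p}$. The complement has weight $3p-|A|$, which is $\le 2p$ exactly when $A\in\{X_1,X_2,X_3\}$.
\item Hence $\Delta_{\bf a}(I(G)^{(2p+1)})=\Gamma_1\cup\Gamma_2\cup\Gamma_3$ with $\Gamma_i=\langle X_i\cup\bigcup_{j\neq i}Y_j\rangle$. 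These are three simplices meeting pairwise in the disjoint faces $\langle Y_3\rangle,\langle Y_2\rangle,\langle Y_1\rangle$.
\item Mayer--Vietoris gives $\widetilde{H}_1\neq 0$, so $H^2_{\mathfrak m}(S/I(G)^{(2p+1)})_{\bf a}\neq 0$.
\item Since the limit equals the minimum, this finishes the bound.
\end{itemize}

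\textbf{The $\bc^{\prime}(G)$ half.} $\bc^{\prime}(G)$ is the minimum of $c^{\prime}(H)=c+\bou_G(H)$ over $H\in\mathcal{B}(G)$, not a maximum. Exhibiting one configuration therefore only gives $\bc^{\prime}(G)\le p+1$. The direction that needs proof is $c^{\prime}(H)\ge p+1$ for every $H\in\mathcal{B}(G)$, and this comes from classifying $\mathcal{B}(G)$.
\begin{itemize}
\item Three vertices from distinct parts form a triangle, and whiskers are leaves. Hence $\mathcal{B}(G)=\{G-X_1,G-X_2,G-X_3\}$.
\item Each $G-X_\ell$ has exactly one component with an edge, plus the $p$ isolated vertices $Y_\ell$.
\item Each $x_{\ell,j}\in V(G)\setminus V(H)$ is adjacent to only $y_{\ell,j}$ within $Y_\ell$. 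So every bouquet clusters a single isolated vertex, and $\bou_G(G-X_\ell)=p$.
\end{itemize}
Your claim that an admissible configuration must omit ``at least two parts' worth of structure'' is not the right statement. A maximal induced bipartite subgraph omits exactly one part and no whisker.
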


In section \ref{sec4}, we discuss a comparison between depths of ordinary and symbolic powers of edge ideals. Simis, Vasconcelos, and Villarreal \cite{svv1994} proved that $I(G)^{(k)}=I(G)^{k}$ for all $k\geq1$ if and only if $G$ is bipartite. Hence, the depths of ordinary and symbolic powers are equal for bipartite graphs. In \cite{m2026}, Muta showed that, for every $k\geq2$, there exists a squarefree monomial ideal $I$ of height two such that $\depth S/I^{(k)}<\depth S/I^{k}$. These examples are given by cover ideals of certain graphs, and they are not edge ideals. Hence, the depth of a symbolic power can be smaller than that of the corresponding ordinary power for squarefree monomial ideal in general. However, no such example is known for edge ideals. Moreover, in the same paper, Muta proved that $\depth S/I(G)^{(k)}\geq\depth S/I(G)^{k}$ for every graph $G$ when $k=2,3$ and proposed the following conjecture. 

\begin{conj}[\cite{m2026}, Conjecture 1.6]
Let $G$ be a graph. Then we have 
$$\depth S/I(G)^{(k)}\geq\depth S/I(G)^{k}\mbox{ for all }k\geq1.$$
\end{conj}

We study this conjecture using local properties of graphs. Recall that a vertex $x$ of a graph $G$ is called {\it weakly simplicial }if the open neighborhood $N(x)$ of $x$ is an independent set and, for any $u,v\in N(x)$, either $N(u)\subset N(v)$ or $N(v)\subset N(u)$. We give a sufficient condition for the conjecture using a leaf whose neighbor is weakly simplicial. More precisely, we show that, when a graph has a leaf whose neighbor is weakly simplicial, the conjecture can be reduced to certain smaller induced subgraphs obtained by deleting vertices around the leaf (see, Theorem \ref{comparison}). The proof uses properties of colon ideals of ordinary and symbolic powers, together with an Eliahou--Kervaire splitting, introduced in \cite{ek1990}. 

As an application of this result, we prove the conjecture for graphs whose connected components are unicyclic. 

\begin{cor}[see, Corollary \ref{unicyclic}]
Let $G$ be a graph, Suppose that every connected component of $G$ is unicyclic or bipartite. Then we have 
$$\depth S/I(G)^{(k)}\geq\depth S/I(G)^{k}\mbox{ for all }k\geq1.$$
\end{cor}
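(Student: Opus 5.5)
We argue by induction on the number of vertices of $G$. The reduction is Theorem \ref{comparison}: if $G$ has a leaf whose neighbor is weakly simplicial, the inequality for $G$ follows from the same inequality for certain smaller induced subgraphs obtained by deleting vertices around the leaf.

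\textbf{Step 1: reduce to a connected, non-bipartite component.} Suppose $G=G_1\sqcup G_2$. Then $I(G)^{(k)}$ and $I(G)^{k}$ decompose into mixed sums of the symbolic, respectively ordinary, powers of the edge ideals of the components. The standard depth formulas for such sums (via Tor-splitting, or the formula for $\depth S/(I+J)^{k}$ with $I,J$ in disjoint variables) express the depth as a minimum over $i+j=k$ (and $i+j=k+1$) of sums of depths of the factors. Comparing these formulas term by term reduces the claim to the components. If we prefer not to rely on these formulas, we keep the induction on the whole graph and check that Theorem \ref{comparison} can be applied to a component; this step must be handled carefully either way.

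For bipartite components, Simis--Vasconcelos--Villarreal gives $I^{(k)}=I^{k}$, so equality holds. So we may assume $G$ is connected and unicyclic with an odd cycle $C$. If $G=C$ is just a cycle, we use the known case: for odd cycles, $I^{(k)}=I^{k}$ for $k<(|C|+1)/2$, and beyond that the depths are known. We could also appeal to the known depth of symbolic and ordinary powers of cycle edge ideals, where the ordinary depth stabilizes at a value at most the symbolic one. I expect to cite this rather than reprove it.

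\textbf{Step 2: find the required leaf.} If $G$ is unicyclic but not a cycle, then $G$ is the cycle $C$ with trees attached, and some tree is nontrivial. Take a longest path from $C$ into a pendant tree, ending at a leaf $\ell$, and let $x$ be its neighbor.

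If $x\notin C$, maximality of the path forces all neighbors of $x$ except one (its parent $y$) to be leaves. Then $N(x)$ is independent, since $G$ has no triangles through $x$ off the cycle, and every leaf neighbor $u$ has $N(u)=\{x\}\subset N(y)$. So $x$ is weakly simplicial.

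If the only leaves hang directly on cycle vertices, take $x\in C$ adjacent to a leaf. Now $N(x)$ contains two cycle neighbors $a,b$ together with leaves. The leaves have neighborhood $\{x\}$, which is contained in $N(a)$ and in $N(b)$. However, $N(a)$ and $N(b)$ need not be comparable, and if $|C|=3$ then $a,b$ are adjacent, so $N(x)$ is not independent. This case is the main obstacle.

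There are two plans for it. One is to note that Theorem \ref{comparison} might be applicable with a weaker hypothesis; for example, the paper may allow deleting $N[x]$ and reduce to forests and paths. The other is to treat graphs whose trees are all stars of depth one attached to the cycle ("whiskered"/corona-type) directly. For these, deleting $N[x]$ leaves a forest, for which equality holds. I would try to verify the weakly-simplicial-type hypothesis required in Theorem \ref{comparison} precisely in this configuration, or else handle it with the colon-ideal and Eliahou--Kervaire splitting argument used to prove Theorem \ref{comparison}.

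\textbf{Step 3: conclude by induction.} The smaller induced subgraphs produced by Theorem \ref{comparison} are induced subgraphs of a unicyclic graph. Each of their components is therefore unicyclic or a tree, so the induction hypothesis applies to them and gives the inequality for $G$.
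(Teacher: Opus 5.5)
Your reduction steps match the paper's proof of Theorem \ref{unicyclic}. Disconnected graphs are handled by comparing the Nguyen--Vu and H\`a--Nguyen--Trung--Trung depth formulas term by term (Lemma \ref{disconnected}), and bipartite components by Simis--Vasconcelos--Villarreal. When some vertex lies at distance at least $2$ from the cycle $C$, a farthest leaf and its neighbor $x\notin V(C)$ satisfy the hypotheses of Theorem \ref{comparison} exactly as you describe, and the smaller graphs are handled by induction.

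The genuine gap is the remaining case, where every vertex is at distance at most $1$ from $C$, including $G=C$ itself. You leave this case open, and neither of your plans works as stated. Theorem \ref{comparison} rests on Lemma \ref{colon-lemma} and Proposition \ref{EK-splitting}, applied successively to $x$ and a chain $N(u_1)\subset\cdots\subset N(u_r)$ of its neighbors. At a cycle vertex no such chain exists: when $|C|\ge 5$ the two cycle neighbors of $x$ have incomparable neighborhoods, and when $|C|=3$ they are adjacent. Observing that $G-N[x]$ is a forest only verifies condition (2) of the theorem, not this structural hypothesis. For the bare cycle you only gesture at the literature.

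The missing idea is that no reduction is needed in this case: once the ordinary powers differ from the symbolic ones, they already have depth $0$. Write $|V(C)|=2t+1$.
\begin{itemize}
\item For $k\le t$, $G$ has no odd cycle of length at most $2k-1$, so $I(G)^{(k)}=I(G)^{k}$ by \cite[Lemma 3.10]{rty2011}.
\item For $k\ge t+1$, fix an edge $\{a,b\}$ of $C$ and put $m=(ab)^{k-t-1}\prod_{v\in V(C)}v$. Since $\deg m=2k-1$ and $I(G)^k$ is generated in degree $2k$, we have $m\notin I(G)^k$.
\item Now let $w$ be any vertex. The distance hypothesis gives a neighbor $c\in V(C)$ of $w$. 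The graph $C-c$ is a path on $2t$ vertices, so it has a perfect matching $\{a_1,b_1\},\dots,\{a_t,b_t\}$. Hence $wm=(wc)\prod_{i}a_ib_i\,(ab)^{k-t-1}\in I(G)^k$.
\end{itemize}
So $m$ is a nonzero socle element of $S/I(G)^k$, giving $\depth S/I(G)^k=0$, and the inequality holds trivially. This argument also covers $G=C$, so no separate citation for cycles is needed.
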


\section{Preliminaries}

Throughout this paper, let $K$ be a field, let $S=K[x_{1},\ldots, x_{n}]$ be a polynomial ring over $K$, and let $\mathfrak{m}=(x_{1},\ldots, x_{n})$ be the unique homogeneous maximal ideal. We recall the notation and results used in this paper. We refer the reader to \cite{BHBook, HHBook, VBook} for the detailed information. 

\subsection{Graphs and simplicial complexes}
We assume that all graphs are finite and simple. For a graph $G$, we write $V(G)$ and $E(G)$ for its vertex set and edge set. For a subset $W\subset V(G)$, let $G|_{W}$ be the induced subgraph on $W$, and set $G-W=G|_{V(G)\setminus W}$. We write $G-v$ for $G-\{v\}$, and $G^{c}$ for the complement of $G$. For $v\in V(G)$, let $N_{G}(v)$ be its open neighborhood and set $N_{G}[v]=N(v)\cup\{v\}$. We omit the subscript when the graph is clear. A subset $C$ of $V(G)$ is called {\it vertex cover}, if $C\cap e\neq\emptyset$ for every edge $e$ of $G$. Also, if $C$ is a vertex cover that is minimal with respect to inclusion, then $C$ is called a {\it minimal vertex cover} of $G$. Moreover, a subset $A$ of $V(G)$ is called an {\it independent set}, if no two vertices in $A$ are adjacent to each other. In particular, if $A$ is an independent set of $G$ that is maximal with respect to inclusion, then $A$ is called a {\it maximal independent set} of $G$. A graph $G$ is {\it well-covered }if all its minimal vertex covers have the same cardinality. 

For a graph $G$ on the vertex set $[n]=\{1,\ldots, n\}$, the {\it edge ideal} of $G$, denoted by $I(G)$, is defined as 
$$I(G)=(x_{i}x_{j}\,\,:\{i,j\}\in E(G)).$$
For a subset $W\subset V(G)$, we write $K[W]$ for the polynomial ring in the variables corresponding to $W$. 

A \emph{simplicial complex} $\Delta$ on $[n]$ is a collection of subsets of $[n]$ closed under taking subsets, that is, $F\in\Delta$ and $H\subseteq F$ imply $H\in \Delta$. Elements of $\Delta$ are called {\it faces}, and maximal faces under inclusion are {\it facets}. We denote the set of facets of $\Delta$ by $\mathcal{F}(\Delta)$. For a face $F \in \Delta$, the {\it link} of $F$ in $\Delta$ is the simplicial complex $\link_{\Delta}F=\{H\subseteq[n]\setminus F\,\,:H\cup F\in\Delta\}$, and the {\it star} of $F$ is the simplicial complex $\st_{\Delta}F=\{H\in\Delta\,\,: H\cup F\in\Delta\}$. Also, for a subset $W\subset V(\Delta)$, set $\Delta_{W}=\{F\in\Delta\,\,: F\subset W\}$. We write $\langle F_{1},\ldots, F_{s}\rangle$ for the simplicial complex generated by $F_{1},\ldots, F_{s}$. 

\subsection{local cohomology modules and the depth}
For a monomial ideal $I$ of $S$, let $\mathcal{G}(I)$ denote the set of minimal monomial generators of $I$. For a nonzero finitely generated graded $S$-module $M$, we write $\pd M$ and $\depth M$ for its projective dimension and depth. For $i \geq 0$, we write $H^i_{\mathfrak{m}}(M)$ for the $i$-th {\it local cohomology module} of $M$ with respect to $\mathfrak{m}$. Then it is known that 
$$\depth M=\min\{i\,\,: H_{\mathfrak m}^{i}(M)\neq0\}.$$

\subsection{Symbolic powers}
Let $I$ be a squarefree monomial ideal of $S$. We denote by $\Min(I)$ the set of minimal prime ideals of $I$. For $k\geq1$, the $k$-th {\it symbolic power} of $I$ is defined by $$I^{(k)}=\bigcap_{P\in\Min(I)}(I^k S_P \cap S).$$
If $\Min(I)=\{P_{1},\ldots, P_{r}\}$, then it is known that $$I^{(k)}=P_{1}^{k}\cap\cdots\cap P_{r}^{k}.$$

For a vector $\mathbf{a} = (a_1,\dots,a_n) \in \mathbb{Z}^n$, we write ${\textbf x}^{\mathbf{a}} = x_1^{a_1}\cdots x_n^{a_n}$, $\supp({\bf a})=\{i\,\,:a_{i}\neq0\}$, and $\supp_{-}{\bf a}=\{i\,\,: a_{i}<0\}$. Let ${\bf e}_{i}$ be the $i$-th standard basis vector of $\mathbb{Z}^{n}$. For a subset $F \subseteq [n]$, let $S_F = S[x_j^{-1} : j \in F]$. The {\it degree complex }of a monomial ideal $I$ at ${\bf a}$ is 
$$\Delta_{\mathbf{a}}(I)=\{F\setminus\supp_{-}{\bf a}\,\,:\supp_{-}{\bf a} \subseteq F \subseteq [n], \ {\textbf x}^{\mathbf{a}} \notin I S_{F}\}.$$
Takayama's formula gives the following combinatorial description of the local cohomology module $H^i_{\mathfrak{m}}(S/I)_{\mathbf{a}}$ for a monomial ideal $I$.  
\begin{thm}[{\normalfont\cite[Theorem 1]{t2005}}]
\label{thm:Takayama}
Let $I\subset S$ be a monomial ideal. Then for every $\mathbf{a} \in \mathbb{Z}^n$ and every $i \geq 0$,
$$H^{i}_{\mathfrak{m}}(S/I)_{\bf a}\simeq\widetilde{H}^{i-|\supp_-({\mathbf{a}})|-1}(\Delta_{\bf a}(I);K).$$
\end{thm}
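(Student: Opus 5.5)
The plan is to compute $H^i_{\mathfrak m}(S/I)$ with the Čech complex and read off each multidegree $\mathbf a$ separately. Recall that $H^i_{\mathfrak m}(M)$ is the $i$-th cohomology of
$$C^\bullet(M):\ 0\to M\to\bigoplus_{|F|=1}M_{F}\to\cdots\to\bigoplus_{|F|=j}M_{F}\to\cdots\to M_{[n]}\to 0,$$
where $M_F=M\otimes_S S_F$. The differential sends the $F$-summand to the $F\cup\{\ell\}$-summands, $\ell\notin F$, with the usual sign $(-1)^{\#\{m\in F:\,m<\ell\}}$. For $M=S/I$ one has $M_F=S_F/IS_F$, and everything is $\mathbb Z^n$-graded. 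So $H^i_{\mathfrak m}(S/I)_{\mathbf a}$ is the $i$-th cohomology of the complex of $K$-vector spaces $C^\bullet(S/I)_{\mathbf a}$.

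\emph{Step 1.} Compute the pieces. Since $S_F$ and $IS_F$ are spanned by Laurent monomials, $(S_F/IS_F)_{\mathbf a}$ is either $K\cdot\mathbf x^{\mathbf a}$ or $0$. It is nonzero exactly when $\mathbf x^{\mathbf a}\in S_F$ and $\mathbf x^{\mathbf a}\notin IS_F$. The first condition says $\supp_-\mathbf a\subseteq F$. Hence $C^j(S/I)_{\mathbf a}$ has a basis indexed by the sets
$$\mathcal F_{\mathbf a}=\{F\subseteq[n]:\ \supp_-\mathbf a\subseteq F,\ \mathbf x^{\mathbf a}\notin IS_F\}$$
with $|F|=j$.

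\emph{Step 2.} Identify the combinatorics. If $\supp_-\mathbf a\subseteq F\subseteq F'$, then $IS_F\subseteq IS_{F'}$. So $\mathcal F_{\mathbf a}$ is closed under shrinking $F$, as long as $F$ still contains $\supp_-\mathbf a$. Therefore $F\mapsto F\setminus\supp_-\mathbf a$ is a bijection from $\mathcal F_{\mathbf a}$ onto the simplicial complex $\Delta_{\mathbf a}(I)$, which is well defined by this closure property. Under this bijection a set with $|F|=j$ becomes a face of dimension $j-|\supp_-\mathbf a|-1$.

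\emph{Step 3.} Match the differentials. The component of the Čech differential from $F$ to $F\cup\{\ell\}$ is the localization map $\mathbf x^{\mathbf a}\mapsto\mathbf x^{\mathbf a}$, times the sign. It is nonzero exactly when both $F$ and $F\cup\{\ell\}$ lie in $\mathcal F_{\mathbf a}$; if $\mathbf x^{\mathbf a}\in IS_{F\cup\{\ell\}}$ the target vanishes. Thus $C^\bullet(S/I)_{\mathbf a}$ is, up to a shift by $|\supp_-\mathbf a|+1$ in cohomological degree and a sign twist, the augmented simplicial cochain complex of $\Delta_{\mathbf a}(I)$. The term $F=\supp_-\mathbf a$ plays the role of the empty face in degree $-1$. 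Taking cohomology in degree $i$ then gives
$$H^i_{\mathfrak m}(S/I)_{\mathbf a}\simeq\widetilde H^{\,i-|\supp_-\mathbf a|-1}(\Delta_{\mathbf a}(I);K).$$

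\emph{Main obstacle.} I expect the delicate point to be Step 3, namely checking that the signs agree. The Čech signs count elements of all of $F$, while the simplicial signs count only elements of $F\setminus\supp_-\mathbf a$. These differ by the factor $(-1)^{\#\{m\in\supp_-\mathbf a:\,m<\ell\}}$, which depends on $\ell$. I would absorb it by rescaling each basis vector $e_F$ by $(-1)^{\sum_{m\in F\setminus\supp_-\mathbf a}\#\{s\in\supp_-\mathbf a:\,s<m\}}$, giving an isomorphism of complexes.

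I would also track the degenerate cases separately. If $\mathbf x^{\mathbf a}\in IS_{\supp_-\mathbf a}$, then $\Delta_{\mathbf a}(I)$ is the void complex and both sides vanish. If $\Delta_{\mathbf a}(I)=\{\emptyset\}$, its only reduced cohomology sits in degree $-1$, which matches the single surviving Čech term $F=\supp_-\mathbf a$.
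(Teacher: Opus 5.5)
Your proof is correct and complete. The paper gives no proof of this statement; it quotes it from Takayama as a black box, so there is no internal argument to compare against.

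What you have written is the standard direct derivation from the \v{C}ech complex, and each step holds:
\begin{itemize}
\item $(S_F/IS_F)_{\mathbf a}$ is $K\mathbf x^{\mathbf a}$ or $0$ according as $F\in\mathcal F_{\mathbf a}$ or not.
\item The closure of $\mathcal F_{\mathbf a}$ under shrinking makes $\Delta_{\mathbf a}(I)$ a simplicial complex, with $F=\supp_-\mathbf a$ playing the role of the empty face.
\item Your sign twist works. Put $\epsilon(m)=(-1)^{\#\{s\in\supp_-\mathbf a:\,s<m\}}$ and $\sigma(G)=\prod_{m\in G}\epsilon(m)$. The \v{C}ech component $e_F\mapsto e_{F\cup\{\ell\}}$ carries the simplicial sign times $\epsilon(\ell)$. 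Since $\sigma(G\cup\{\ell\})=\epsilon(\ell)\sigma(G)$, the rescaling $e_F\mapsto\sigma(F\setminus\supp_-\mathbf a)\,e^*_{F\setminus\supp_-\mathbf a}$ intertwines the two differentials.
\item Your treatment of the void complex and of $\{\emptyset\}$ is right.
\end{itemize}

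One advantage of your route is that it proves exactly the form the paper uses: $\Delta_{\mathbf a}(I)$ is described through the condition $\mathbf x^{\mathbf a}\notin IS_F$, and there is no restriction on $\mathbf a$. This is not literally how Takayama phrased his original theorem, which uses the exponents of the minimal generators of $I$. Quoting it in the present form therefore tacitly relies on a translation that your argument makes unnecessary.

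Note also that the paper's later proofs use reduced homology $\widetilde H_{\bullet}$ rather than $\widetilde H^{\bullet}$. Over the field $K$ these have the same dimension for a finite complex, so your cohomological version is all that is needed there.
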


\section{the non-increasing property for symbolic powers}\label{sec3}

In this section, we study the non-increasing property of the depth of symbolic powers of edge ideals using local structures of graphs. We first give a condition on minimal vertex covers that yields inequalities between symbolic powers whose exponents differ by a fixed positive integer. We then study the non-vanishing of local cohomology and obtain further sufficient conditions for the symbolic depth function to be non-increasing. Finally, we give counterexamples to a conjecture on its stable value posed by Minh, T.N. Trung, and Vu. 

The following result follows from \cite[Remark 3.3]{sf2017} and \cite[Corollary 1.3]{r2010}. We include a proof for completeness. 

\begin{thm} \label{decreasing} Let $G$ be a  graph and $A\subset V(G)$, $t\in\mathbb{N}$.
If $|C\cap A|=t$ for any minimal vertex cover of $G$, then, we have 
$$\depth S/I(G)^{(k)}\geq\depth S/I(G)^{(k+t)}\mbox{ for all }k\geq1.$$
\end{thm}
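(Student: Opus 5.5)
The plan is to exhibit $I(G)^{(k)}$ as a colon ideal of $I(G)^{(k+t)}$ by a single monomial, and then to invoke the known depth inequality for colon ideals by monomials.

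Write $\Min(I(G))=\{P_C\}$, where $C$ ranges over the minimal vertex covers of $G$ and $P_C=(x_i : i\in C)$. Thus $I(G)^{(m)}=\bigcap_C P_C^{m}$. Set $x_A=\prod_{i\in A}x_i$. We first claim that
$$I(G)^{(k+t)}:x_A=I(G)^{(k)}.$$
Since colon commutes with intersection, we have $I(G)^{(k+t)}:x_A=\bigcap_C (P_C^{k+t}:x_A)$. For a monomial prime $P_C$ and a squarefree monomial $x_A$, a monomial $u$ lies in $P_C^{m}$ if and only if $\sum_{i\in C}\deg_{x_i}u\geq m$. Hence $u x_A\in P_C^{k+t}$ if and only if $\sum_{i\in C}\deg_{x_i}u+|C\cap A|\geq k+t$. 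By hypothesis $|C\cap A|=t$, so this is equivalent to $u\in P_C^{k}$. Therefore $P_C^{k+t}:x_A=P_C^{k}$ for every $C$. Intersecting over all $C$ gives the claim. This is the identity of \cite[Remark 3.3]{sf2017}.

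Next we invoke the depth inequality for colons of monomial ideals, \cite[Corollary 1.3]{r2010}: for a monomial ideal $I$ and a monomial $f\notin I$, one has $\depth S/(I:f)\geq\depth S/I$. Applying this with $I=I(G)^{(k+t)}$ and $f=x_A$ gives
$$\depth S/I(G)^{(k)}\geq\depth S/I(G)^{(k+t)},$$
which is the statement. We still need $x_A\notin I(G)^{(k+t)}$. Otherwise the colon would be all of $S$, whereas by the claim it equals $I(G)^{(k)}\neq S$. If $G$ has no edges the statement is vacuous.

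The main point needing care is the colon inequality. If one wants the argument to be self-contained, one can prove it by induction on the degree of $f$, reducing to $f=x_i$. For a variable, one can use polarization, or Takayama's formula (Theorem \ref{thm:Takayama}): the degree complexes of $I:x_i$ at $\mathbf a$ coincide with those of $I$ at $\mathbf a+\mathbf e_i$ when $a_i\ge 0$, and they agree when $a_i<0$. Hence the nonvanishing of $H^j_{\mathfrak m}(S/(I:x_i))$ forces the nonvanishing of $H^j_{\mathfrak m}(S/I)$. Since the paper cites \cite{r2010}, however, we simply quote it.
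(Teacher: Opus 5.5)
Your proposal is correct and follows essentially the same route as the paper: you prove $P_C^{k+t}:x_A=P_C^{k}$ for each minimal vertex cover $C$ using $|C\cap A|=t$, intersect to get $I(G)^{(k+t)}:x_A=I(G)^{(k)}$, and then apply \cite[Corollary 1.3]{r2010}. Your justification that $x_A\notin I(G)^{(k+t)}$ (the colon equals the proper ideal $I(G)^{(k)}$) is sound, and so is your optional Takayama-based proof of the colon depth inequality.
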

\begin{proof}
Let $P_{C}$ be the minimal prime ideal of $I(G)$ corresponding to a minimal vertex cover $C$ of $G$. Set $m_A=\prod_{j\in A}x_j$ and fix $k\geq1$. We show that $P_{C}^{k+t}: m_{A}=P_{C}^{k}$.
Take a minimal monomial generator $m$ of $P_{C}^{k+t}$. Since $|C\cap A|=t$, we have   $(m):m_A \subset P_{C}^{k}$, and hence, we obtain $P_{C}^{k+t}:m_A\subset P_{C}^{k}$.
Conversely, take a minimal monomial generator $m$ of $P_{C}^{k}$. Since we have $|C\cap A|=t$, we have $mm_{A}\in P_{C}^{k+t}$, and hence, we see that $P_{C}^{k}\subset(P_{C}^{k+t}:m_A)$. Now we have
\begin{align*}
I(G)^{(k+t)}:m_{A}&=(\bigcap_{P \in \Min_{S}I(G)} P^{k+t}):m_{A} \\
&=\bigcap_{P\in\Min_{S}I(G)}(P^{k+t}:m_{A})\\   
&=\bigcap_{P \in \Min_{S}I(G)}P^{k}\\
&=I(G)^{(k)}.
\end{align*}
Note that  $m_{A}\not\in I(G)^{(k+t)}$, since $k\ge1$.
From \cite[Corollary 1.3]{r2010}, we have 
$$\depth S/I(G)^{(k+t)}\leq\depth S/(I(G)^{(k+t)}:m_{A})=\depth S/I(G)^{(k)},$$
which completes the proof. 
\end{proof}

We will apply Theorem \ref{decreasing} to induced subgraphs. The following definition describes vertices that have no neighbors outisde a given induced subgraph. 

\begin{defi}
Let $G$ be a graph and let $H$ be an induced subgraph of $G$. A vertex $v\in V(H)$ is $H$-{\it closed }in $G$ if $N_{G}(v)=N_{H}(v)$, or equivalently, if $\deg_{G}(v)=\deg_{H}(v)$. 
\end{defi}

Our first application uses a complete multipartite induced subgraph $H$. We choose one $H$-closed vertex from each part and apply Theorem \ref{decreasing} to the set of these vertices. 

\begin{cor} \label{$t$-partite} 
Let $t\geq2$ be an integer. Suppose a graph $G$ contains a complete $t$-partite graph $H$ as an induced subgraph. We assume each partition of $H$ has an $H$-closed vertex. Then we have 
$$\depth S/I(G)^{(k)}\geq\depth S/I(G)^{(k+t-1)}\mbox{ for all }k\geq1.$$
\end{cor}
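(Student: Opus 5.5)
The plan is to apply Theorem \ref{decreasing} with $A=\{v_1,\ldots,v_t\}$, where $V_1,\ldots,V_t$ are the parts of $H$ and $v_i\in V_i$ is an $H$-closed vertex. Everything then reduces to showing that $|C\cap A|=t-1$ for every minimal vertex cover $C$ of $G$. With that in hand, Theorem \ref{decreasing}, applied with $t-1$ in place of $t$, gives the stated inequality. Since $t\geq 2$, we have $t-1\in\mathbb{N}$, so the theorem applies.

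First, the upper bound $|C\cap A|\leq t-1$. The vertices $v_1,\ldots,v_t$ lie in distinct parts of the complete $t$-partite graph $H$, and $H$ is an induced subgraph, so $A$ is a clique of size $t$ in $G$. Any vertex cover misses at most one vertex of a clique. Hence $|C\cap A|\geq t-1$, and it remains to rule out $A\subset C$.

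Second, the key step: no minimal vertex cover contains all of $A$. Suppose $A\subset C$. Minimality of $C$ means that for each $v_i$ there is a neighbor $w_i\in N_G(v_i)$ with $w_i\notin C$; otherwise $C\setminus\{v_i\}$ would still be a cover. Because $v_i$ is $H$-closed, $N_G(v_i)=N_H(v_i)=V(H)\setminus V_i$. So each $w_i$ lies in $V(H)\setminus V_i$, say $w_i\in V_{j}$ with $j\neq i$.

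Take $i=1$ and let $w_1\in V_j$ with $j\neq 1$, $w_1\notin C$. Now look at $v_j$. It lies in $C$, and its neighborhood is $V(H)\setminus V_j$. So there is $w_j\notin C$ with $w_j\in V_l$ for some $l\neq j$. Since $w_1\in V_j$ and $w_j\in V_l$ lie in different parts, they are adjacent in $H$, hence in $G$. But neither lies in $C$, so the edge $\{w_1,w_j\}$ is uncovered, a contradiction. Therefore $|C\cap A|=t-1$ for every minimal vertex cover $C$.

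The only delicate point is this second step. It uses the closedness hypothesis so that the witnesses $w_i$ lie inside $H$, and it uses complete multipartiteness so that two witnesses from different parts are adjacent. Choosing a witness for $v_j$, where $V_j$ is the part containing $w_1$, guarantees that the two uncovered vertices lie in different parts.
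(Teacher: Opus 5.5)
Your proof is correct and is essentially the paper's argument. Both use that $A$ is a $t$-clique to get $|C\cap A|\geq t-1$. Both then use that once some vertex $w\in V_j$ of $H$ lies outside $C$, every neighbor of the $H$-closed vertex $v_j$ is adjacent to $w$ and hence in $C$, so $v_j\notin C$ by minimality; the paper states this directly, while you phrase it as a contradiction with $A\subset C$.

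One small slip: the paragraph you announce as the ``upper bound'' actually proves the lower bound $|C\cap A|\geq t-1$.
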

\begin{proof}
We may assume that $G$ does not have a leaf. Choose an $H$-closed vertex for each partition of $H$. Let $A$ be the set  consisting of these vertices. By Theorem \ref{decreasing} it is enough to prove that $|C\cap A|=k-1$ for any minimal vertex cover $C$ of $G$. Since $G_{A}$ is a $t$-complete graph $K_{t}$, we see that $|C\cap A|\geq t-1$. Suppose $V(H)\subset C$.
Take an  an $H$-closed vertex $v$. Then $C\setminus \{v\}$ is a vertex cover of $G$.
This is a contradiction with the fact that $C$ is a minimal vertex cover of $G$. Hence there exists $w\in V(H)$ which does not belong to $C$. Then  all the vertices connecting to $w$  belong to $C$. Therefore, the $H$-closed vertices in the partition belonging to $w$  do not belong to $C$. Hence we have $|C\cap A|\geq t-1$. Then, we conclude that $|C\cap A|=t-1$. as required. 
\end{proof}

As another application of Theorem \ref{decreasing}, we consider very well-covered graphs. A graph $G$ is {\it very well-covered }graph if $G$ is well-covered, has no isolated vertices, and $\height I(G)=|V(G)|/2$. Every very well-covered graph has a perfect matching. Here, {\it perfect matching} is a set of pairwise disjoint edges covering all vertices. It is known that, by \cite{f1982}, every very well-covered graph has a perfect matching.

\begin{cor}\label{very well}
Let $G$ be a  graph which contains a very well-covered graph $H$ as an induced subgraph. Suppose that there exists an edge $\{u,v\}$ of a perfect matching of $H$ has both endpoints $H$-closed. Then we have 
$$\depth S/I(G)^{(k)}\geq\depth S/I(G)^{(k+1)}\mbox{ for all }k\geq1.$$
\end{cor}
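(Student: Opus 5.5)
We apply Theorem \ref{decreasing} with $A=\{u,v\}$ and $t=1$. It then suffices to show that every minimal vertex cover $C$ of $G$ satisfies $|C\cap\{u,v\}|=1$.

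The lower bound is immediate. Since $\{u,v\}$ is an edge of $G$, every vertex cover contains $u$ or $v$, so $|C\cap\{u,v\}|\geq 1$.

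The main step is to rule out $\{u,v\}\subset C$ for a minimal vertex cover $C$. Suppose $u,v\in C$. Since $u$ is $H$-closed, $N_G(u)=N_H(u)$. By minimality, $C\setminus\{u\}$ is not a vertex cover, so some neighbor of $u$ lies outside $C$, and this neighbor lies in $V(H)$. The same argument gives a neighbor of $v$ in $V(H)\setminus C$. We now pass to $H$. Since $H$ is an induced subgraph, $C\cap V(H)$ is a vertex cover of $H$, so it contains some minimal vertex cover $C'$ of $H$. Note that $u,v$ need not lie in $C'$. The cleaner way to use the hypothesis is through the structure of very well-covered graphs. I would use the known characterization, which I assume and do not reprove: for a perfect matching $\{x_1y_1,\dots,x_hy_h\}$ of a very well-covered graph, $x_iy_i$ is a matching edge, and if $ay_i, bx_i$ are edges then $ab$ is an edge. (This is the Favaron / Crupi--Rinaldo--Terai property for a suitable perfect matching; I will take $\{u,v\}=\{x_i,y_i\}$.)

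Let $a\in N_H(u)\setminus C$ and $b\in N_H(v)\setminus C$. If $a=b$, then $a$ is a common neighbor of $u$ and $v$. Then the triangle $\{a,u,v\}$ lies in $H$, which is impossible, because a very well-covered graph with such a matching edge has no triangle on it: the rule would force the edge $aa$, a contradiction. If $a\neq b$, the property above gives $\{a,b\}\in E(H)\subset E(G)$. But neither $a$ nor $b$ is in $C$, contradicting that $C$ covers $G$. Hence $\{u,v\}\not\subset C$, so $|C\cap\{u,v\}|=1$, and Theorem \ref{decreasing} gives the claim.

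The main obstacle I expect is that the structural property "$ay_i,bx_i\in E\Rightarrow ab\in E$" holds for a particular perfect matching of $H$, whereas the hypothesis names an edge of "a perfect matching". In a very well-covered graph the perfect matching is essentially determined in the relevant sense, but care is needed here. If that fails, I would instead argue directly from well-coveredness. Extend $\{a,b\}$, which is independent in $H$, to a maximal independent set $M$ of $H$ avoiding $u$ and $v$. Its complement is then a minimal vertex cover of $H$ containing both ends of a matching edge. This contradicts the fact that every minimal cover of $H$ has size $h=|V(H)|/2$ and meets each of the $h$ matching edges, hence meets each in exactly one vertex.

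This fallback argument actually works by itself and avoids the structure theorem. Every minimal vertex cover of $H$ has size $h$ and must meet all $h$ disjoint matching edges, so it meets each exactly once. The set $\{a,b\}$ is independent in $H$ (since $a,b\notin C$ and $C$ covers $G$), and it is disjoint from $\{u,v\}$. If $\{a,b\}$ extends to a maximal independent set of $H$ missing both $u$ and $v$, we obtain the contradiction above. Such an extension exists because $u$ is adjacent to $a$ and $v$ is adjacent to $b$, so neither $u$ nor $v$ can ever be added to a set containing $\{a,b\}$. The case $a=b$ is handled the same way using $\{a\}$. This is the argument I would write.
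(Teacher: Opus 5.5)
Your final (fallback) argument is correct and is essentially the paper's proof. Both apply Theorem \ref{decreasing} with $A=\{u,v\}$, $t=1$, and rest on the counting fact that every minimal vertex cover of $H$ has $|V(H)|/2$ elements and so meets each perfect-matching edge exactly once, with $H$-closedness keeping the uncovered neighbors of $u,v$ inside $H$. The differences are cosmetic: the paper takes a minimal cover $D\subset C\cap V(H)$ of $H$ and shows $C\cap\{u,v\}=D\cap\{u,v\}$, whereas you build a minimal cover of $H$ containing $u,v$ as the complement of a maximal independent set through those neighbors, and your detour through the Favaron-type structure property is unnecessary.
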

\begin{proof}
Let $C$ be a minimal vertex cover of $G$ and let $M$ be a perfect matching of $H$ containing $\{u,v\}$. Take a minimal vertex cover $D$ of $H$ with $D\subset C\cap V(H)$. Since $H$ is a very well-covered, we have $|D|=|M|$. Notice that $|D\cap\{u,v\}|=1$. From now on, we verify that $C\cap\{u,v\}=D\cap\{u,v\}$. Let $x\in C\cap\{u,v\}$. Since $C$ is a minimal vertex cover of $G$, there exists a vertex $y\in N_{G}(x)\setminus C$. As $x$ is $H$-closed, we have $y\in V(H)$. Since $D$ is a vertex cover of $H$ and $y\notin D$, we obtain $x\in D$. Hence, we see that $C\cap\{u,v\}=D\cap\{u,v\}$, and thus, from Theorem \ref{decreasing}, the assertion follows. 
\end{proof}

The following corollary extends the depth inequality for graphs with a leaf to graphs with a simplicial vertex. 

\begin{cor} \label{simplicial} 
Let $G$ be a graph. Suppose that $G$ has a simplicial vertex of degree $p$. Then we have
$$\depth S/I(G)^{(k)}\geq\depth S/I(G)^{(k+p)}\mbox{ for all }k\geq1$$
\end{cor}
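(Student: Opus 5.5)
Let $x$ be a simplicial vertex of $G$ with $\deg x=p$, so that $N[x]=\{x\}\cup N(x)$ is a clique on $p+1$ vertices. The plan is to apply Theorem \ref{decreasing} with $A=N[x]$ and $t=p$. It then suffices to show that every minimal vertex cover $C$ of $G$ satisfies $|C\cap N[x]|=p$.

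Since $G|_{N[x]}$ is the complete graph $K_{p+1}$, any vertex cover misses at most one vertex of $N[x]$. Hence $|C\cap N[x]|\geq p$.

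For the reverse inequality, suppose $N[x]\subset C$. Every edge of $G$ containing $x$ has its other endpoint in $N(x)\subset C$. Therefore $C\setminus\{x\}$ is still a vertex cover, which contradicts the minimality of $C$. So $|C\cap N[x]|\leq p$, and hence $|C\cap N[x]|=p$. Theorem \ref{decreasing} now gives $\depth S/I(G)^{(k)}\geq\depth S/I(G)^{(k+p)}$ for all $k\geq1$.

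The degenerate case $p=0$ (an isolated vertex) is trivial and can be excluded or handled directly. Alternatively, the result also follows from Corollary \ref{$t$-partite} with $H=K_{p+1}=G|_{N[x]}$ and $t=p+1$: each part is a single vertex, and $x$ is an $H$-closed vertex in its part. However, the other parts need not be $H$-closed, so the direct argument above via Theorem \ref{decreasing} is cleaner. There is no real obstacle here; the only step needing care is the minimality argument that excludes $N[x]\subset C$.
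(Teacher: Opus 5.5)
Your argument is correct and is the paper's proof: you take $A=N[x]$ and $t=p$ in Theorem \ref{decreasing}, get $|C\cap N[x]|\geq p$ from the clique $G|_{N[x]}$, and rule out $N[x]\subset C$ because $C\setminus\{x\}$ would still be a vertex cover. Drop the aside about the complete multipartite corollary, since, as you yourself note, its hypothesis that every part contain an $H$-closed vertex generally fails when neighbors of $x$ have neighbors outside $N[x]$.
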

\begin{proof}
Let $v$ be a a simplicial vertex of degree $p$ in $G$. The assertion clearly holds if $p=0$, hence assume $p>0$. Let $C$ be a minimal vertex cover of $G$. Since $G|_{N[v]}$ is complete, we have $|C\cap N[v]|\geq p$. If $N[v]\subset C$, then $C\setminus\{v\}$ is a vertex cover of $G$, a contradiction. Hence, we obtain $|C\cap N[v]|=p$, and thus, from Theorem \ref{decreasing}, the assertion follows. 
\end{proof}

For further applications of Theorem \ref{decreasing}, we introduce the following notion. 

\begin{defi}
Let $G$ be a graph and let $M\subset V(G)$. For $T\subset V(G)\setminus M$, let $G_{T}$ be the graph obtained from $G$ by adding a new leaf adjacent to $v$ for each $v\in T$. We call $(G,M)$ {\it well-covered of cardinality} $c$ if $|C\cap V(G)|=c$  for every $T\subset V(G)\setminus M$ and minimal vertex cover $C$ of $G_{T}$. We say that $M$ is {\it minimal } if no proper subset of $M$ has this property.    
\end{defi}

The following corollary gives a depth inequality for graphs containing a well-covered marked graph as an induced subgraph. 

\begin{cor} \label{subgraph} 
Let $(H,M)$ be a graph $H$ with the marked vertex set $M$. Suppose $(H,M)$ is well-covered  of cardinality $c$. A graph $G$ contains $H$ as an induced subgraph such that any vertex $v\in M$ is $H$-closed in $G$. Then we have 
$$\depth S/I(G)^{(k)} \geq\depth S/I(G)^{(k+c)}\mbox{ for all }k\geq1$$
\end{cor}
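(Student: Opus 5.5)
We apply Theorem \ref{decreasing} with $A=V(H)$ and $t=c$. It suffices to show that $|C\cap V(H)|=c$ for every minimal vertex cover $C$ of $G$. The plan is to encode how $C$ interacts with the outside of $H$ by a suitable set $T\subset V(H)\setminus M$, and then invoke the well-covered hypothesis on $(H,M)$.

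Fix a minimal vertex cover $C$ of $G$ and define
$$T=\{v\in V(H)\,:\, v\text{ has a neighbor in }V(G)\setminus(V(H)\cup C)\}.$$
Every vertex of $M$ is $H$-closed, so it has no neighbors outside $V(H)$; hence $T\subset V(H)\setminus M$. Each $v\in T$ has a neighbor outside $C$, so $v\in C$. Let $\ell_v$ be the new leaf attached to $v$ in $H_T$. We claim that
$$C'=C\cap V(H)$$
is a minimal vertex cover of $H_T$.

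\emph{Covering.} Edges of $H$ are edges of $G$, since $H$ is induced, so $C'$ covers them. Each leaf edge $\{v,\ell_v\}$ is covered because $v\in C$.

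\emph{Minimality.} Take $x\in C'$. Since $C$ is minimal in $G$, there is $y\in N_G(x)\setminus C$. If $y\in V(H)$, then $y$ is a neighbor of $x$ in $H_T$ lying outside $C'$, so $C'\setminus\{x\}$ misses the edge $\{x,y\}$. Otherwise $y\notin V(H)\cup C$, so $x\in T$, and the edge $\{x,\ell_x\}$ is uncovered by $C'\setminus\{x\}$, since $\ell_x\notin C'$.

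Therefore $C'$ is a minimal vertex cover of $H_T$. By the hypothesis that $(H,M)$ is well-covered of cardinality $c$, we get $|C\cap V(H)|=|C'\cap V(H)|=c$. Theorem \ref{decreasing} with $A=V(H)$ then gives the inequality for all $k\geq1$.

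The only delicate point is choosing $T$ so that minimality transfers. Membership in $T$ must record exactly those vertices of $V(H)$ whose "private" uncovered neighbor lies outside $H$. The $H$-closedness of $M$ is what guarantees that $T$ avoids $M$.
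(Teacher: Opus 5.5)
Your proposal is correct and follows essentially the paper's route: both apply Theorem~\ref{decreasing} with $A=V(H)$ and $t=c$ after showing that $C\cap V(H)$ is a minimal vertex cover of $H_T$ for a suitable $T\subset V(H)\setminus M$. The only difference is the choice of $T$. The paper takes $T=(C\cap V(H))\setminus M$, so the leaf edges are covered automatically and $H$-closedness is used in the minimality step; your smaller $T$ (vertices with an uncovered neighbor outside $H$) instead uses $H$-closedness to guarantee $T\cap M=\emptyset$, which makes minimality immediate.
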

\begin{proof}
Assume $c\geq1$. Let $C$ be a minimal vertex cover of $G$, and set $D=C\cap V(H)$, $T=D\setminus M$. Let $H_{T}$ be obtained from $H$ by adding a new leaf $z_{v}$ adjacent only to $v$ for each $v\in C\cap(V(H)\setminus M)$. Then, we see that $D$ is a vertex cover of $H_{T}$. We prove that $D$ is minimal. Suppose that there exists a vertex $v\in D$ such that $D\setminus\{v\}$ is a vertex cover of $H_{T}$. Since $\{u,z_{u}\}\in E(H_{T})$ for every $u\in T$, we have $v\notin T$, and hence, $v\in M$. Since $D\setminus\{v\}$ is a vertex cover of $H$, we obtain that $N_{G}(v)=N_{H}(v)\subset D\setminus\{v\}\subset C$, which implies that $C\setminus\{v\}$ is a vertex cover of $G$, a contradiction. Hence, $D$ is a minimal vertex cover of $H_{T}$, and thus, from assumption and Theorem \ref{decreasing}, the assertion follows. 
\end{proof}

We need the following lemma for the classification of well-covered marked graphs of cardinality three. 

\begin{lemma}\label{leaf-lemma}
Let $G$ be a connected graph with $|V(G)|\geq3$ and let $M\subset V(G)$. If there exists a leaf $u$ such that $u\in V(G)\setminus M$, then $(G,M)$ is not well-covered. 
\end{lemma}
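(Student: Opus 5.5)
Let $u$ be a leaf of $G$ with $u\notin M$, and let $w$ be its unique neighbor. To show that $(G,M)$ is not well-covered, I will find a set $T\subset V(G)\setminus M$ and two minimal vertex covers $C_1,C_2$ of $G_T$ with $|C_1\cap V(G)|\neq |C_2\cap V(G)|$. Since $u\notin M$, both $T=\emptyset$ and $T=\{u\}$ are allowed. In $G_{\{u\}}$ there is a new leaf $z$ attached to $u$, so the graph contains the path $w-u-z$. The plan is to compare covers that contain both $w$ and $u$ with covers that contain only one of them, keeping everything else fixed.

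First I use that $G$ is connected with $|V(G)|\geq 3$. Then $w$ has a neighbor $y\neq u$, since otherwise $G$ would consist of the single edge $uw$. I choose a set $B$ that covers all edges of $G-\{u,w\}$, and then extend to minimal covers of $G_{\{u\}}$ in two ways.

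\emph{Cover containing $w$ and $u$.} Start from $\{w,u\}\cup B$. Here $u$ is needed because it is the only cover of the edge $uz$. This set covers $G_{\{u\}}$, and I minimize it inside $B$ only. The result stays a minimal cover of $G_{\{u\}}$ as long as $w$ remains necessary, that is, as long as some neighbor of $w$ other than $u$ lies outside the cover.

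\emph{Cover avoiding $w$.} Take $C_2=N_G(w)\cup\{\dots\}$, so that $u\in C_2$ and $w\notin C_2$. Minimize this set while keeping $u$, which is forced anyway by $uz$.

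The cleanest way to do this is to fix one minimal vertex cover $D$ of the graph $G'=(G-\{u,w\})$ with the vertices of $N(w)\setminus\{u\}$ made mandatory. Then I set
\[ C_2=D\cup\{u\} \quad\text{and}\quad C_1=D'\cup\{u,w\}, \]
where $D'$ is a minimal cover of $G-\{u,w\}$ that omits some neighbor $y$ of $w$. I expect this bookkeeping to be the awkward point, so here is a simpler alternative.

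\emph{Simpler alternative.} Let $C$ be any minimal vertex cover of $G_{\{u\}}$ with $w\in C$. Such a cover exists: start from $V(G)\setminus\{y\}\cup\{u\}$ for a neighbor $y\neq u$ of $w$, and reduce it while keeping $y$ out. Keeping $y$ out forces $w$ to stay, and $u$ is forced by $z$. Then $|C\cap V(G)|$ counts both $u$ and $w$.

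\emph{Comparison cover with $T=\emptyset$.} Now $C\setminus\{u\}$ is a vertex cover of $G$, because the only $G$-edge at $u$ is $uw$ and $w\in C$. It is minimal in $G$: every other vertex $v$ of $C$ still has a $G$-neighbor outside $C$, since the $G_{\{u\}}$-neighbor witnessing the minimality of $v$ is not $z$ (only $u$ is adjacent to $z$). It is also not $u$, because $u\in C$. So $C\setminus\{u\}$ is a minimal vertex cover of $G_\emptyset=G$, and its intersection with $V(G)$ has size $|C\cap V(G)|-1$.

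The two values for $T=\emptyset$ and $T=\{u\}$ differ by one, so $(G,M)$ is not well-covered. The main step to check is the existence of a minimal cover of $G_{\{u\}}$ containing $w$ and avoiding a neighbor $y\neq u$ of $w$. I will get it by extending the independent set $\{y,z\}$ to a maximal independent set $I$ of $G_{\{u\}}$ and taking $C$ to be the complement of $I$. Then $y\in I$ forces $w\notin I$, and $z\in I$ forces $u\notin I$, so both $u$ and $w$ lie in $C$.
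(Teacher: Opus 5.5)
Your final argument (the ``simpler alternative'') is correct and is essentially the paper's proof run in reverse. The paper takes a maximal independent set of $G$ containing the leaf and a second neighbor of the leaf's neighbor, then adds the leaf to the complementary minimal cover of $G$ to get a minimal cover of $G_{\{u\}}$; you take a maximal independent set of $G_{\{u\}}$ containing $z$ and $y$ and remove $u$ from its complement, which produces the same pair of covers, whose sizes in $V(G)$ differ by one. The earlier sketches involving $D$, $D'$ and $C_2=N_G(w)\cup\{\dots\}$ are not needed and can be deleted.
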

\begin{proof}
Suppose that $(G,M)$ is well-covered of cardinality $c$ and $\{u,v\}\in E(G)$. Take $w\in N(v)\setminus\{u\}$ and let $A$ be a maximal independent set of $G$ such that $u,w\in A$. Set $C=V(G)\setminus A$. Since $V(G)\setminus A$ is a minimal vertex cover of $G$, we have $|C|=c$. Let $G^{\prime}$ be a graph obtained from $G$ by attaching a leaf $z$ to $u$. Since $C$ is a minimal vertex cover of $G$ and $u$ is a leaf, every vertex of $C\setminus\{v\}$ has a neighbor in $A\setminus\{u\}$, which implies that $C\cup\{u\}$ is a minimal vertex cover of $G^{\prime}$. Moreover, we have $|(C\cup\{u\})\cap V(G)|=c+1$, a contradiction, as required. 
\end{proof}

We now classify the connected well-covered marked graphs of cardinality three under the following assumptions. 

\begin{prop}\label{cardinality3} 
Let $(G,M)$ be a connected graph $G$, and let $M\subset V(G)$ such that $M$ does not contain a simplicial vertex of degree 1,2. Then $(G,M)$ is well-covered of  cardinality 3 such that $M$ is minimal if and only if, up to isomorphism of marked graphs, one of the following hold:
\begin{enumerate}
\item $G\cong C_{5}$ and $V(G)\setminus M$ consists of two non-adjacent vertices. 
\item $G\cong(P_{3}\sqcup K_{2})^{c}$ and $V(G)\setminus M$ consists of one end point of the $P_{3}$ in the complement of $G$. 
\item $V(G)=[5]$, $E(G)=\{\{1,2\},\{2,3\},\{3,4\},\{4,1\},\{1,5\},\{2,5\}\}$, and $M=\{1,2,3,4\}$. 
\item $G\cong K_{4}$ and $|M|=1$. 
\item $G\cong K_{3,3}$ and $M=V(G)$. 
\end{enumerate}
\end{prop}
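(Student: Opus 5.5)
The proof has two directions: checking that each listed marked graph works (the "if" part), and a classification showing nothing else does (the "only if" part). Throughout I use the following reformulation. For $T\subset V(G)\setminus M$, the minimal vertex covers of $G_T$ restricted to $V(G)$ are exactly the covers $D$ of $G$ that contain $T$ and are minimal away from $T$. Such a $D$ must satisfy two conditions: every $v\in D\setminus T$ has a neighbour outside $D$, and every pendant leaf at a vertex of $T$ lies outside the cover. Equivalently, $(G,M)$ is well-covered of cardinality $c$ exactly when every set $D=V(G)\setminus A$ has size $c$, where $A$ is independent and every vertex of $M$ that is outside $A$ has a neighbour in $A$. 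In other words, every independent set of $G$ that dominates $M$ (not necessarily all of $V(G)$) has size $|V(G)|-c$.

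For the "if" direction I will verify this reformulated property case by case; each graph has at most six vertices, so the check is finite.
\begin{itemize}
\item For $C_5$ with two non-adjacent unmarked vertices, every independent set dominating the three marked vertices has size $2$.
\item For $K_4$ with a single marked vertex $m$, an independent set has at most one vertex, and to dominate $m$ it must be nonempty, so it has size $1$.
\item For $K_{3,3}$ with $M=V(G)$, a dominating independent set must be a whole side, so it has size $3$.
\item Cases (2) and (3) are checked in the same way by listing their independent sets.
\end{itemize}
Minimality of $M$ follows by exhibiting, for each $m\in M$, an independent set that dominates $M\setminus\{m\}$ but has the wrong size. I will also confirm that no vertex of $M$ is simplicial of degree $1$ or $2$ in any of these graphs.

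For the "only if" direction, the first step is to bound the graph. Since $c=3$, every independent set dominating $M$ has size $|V(G)|-3$. Lemma~\ref{leaf-lemma} shows that every leaf lies in $M$. By hypothesis no vertex of $M$ is simplicial of degree $\le 2$, and connectivity then rules out leaves altogether except in tiny cases. Next I bound $|V(G)|$. Take a maximal independent set $A$: its complement is a minimal cover, so $|V(G)|-|A|=3$. Taking $T=V(G)\setminus M$ and choosing $A$ disjoint from $T$ forces $M$ to be dominated by independent sets inside $M$. Applying Lemma~\ref{leaf-lemma}-type leaf attachments in the manner of Corollary~\ref{subgraph}, I compare covers obtained by adding one vertex of $V(G)\setminus M$ to a cover. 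The aim is to show that $\alpha(G)\le 3$, so that $|V(G)|\le 6$.

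The remaining step, which I expect to be the main obstacle, is the exhaustive classification of graphs on at most $6$ vertices with $|V(G)|-\alpha=3$ in which all maximal independent sets have the same size (taking $T=\emptyset$ forces $G$ to be well-covered, with cover number $3$). I will organize it by the value of $|V(G)|$:
\begin{itemize}
\item $n=4$: the only well-covered graph with cover number $3$ is $K_4$.
\item $n=5$: the connected well-covered graphs with $\alpha=2$, whose complements are triangle-free.
\item $n=6$: the graphs with $\alpha=3$ that are well-covered and leaf-free, which gives $K_{3,3}$ and a few others.
\end{itemize}
For each candidate graph I then determine the minimal marked sets $M$ by applying the reformulated condition to every $T$. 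Candidates are eliminated when some independent set dominating $M$ has the wrong size, and also when $M$ contains a simplicial vertex of degree $1$ or $2$. That last exclusion is what removes, for example, the extra markings on $K_4$ and triangle-attached configurations. I expect the survivors to be exactly cases (1)–(5).
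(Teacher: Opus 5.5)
You take the same route as the paper: use $T=\emptyset$ to see that $G$ is unmixed with $\height I(G)=3$, bound $|V(G)|$, exclude leaves via Lemma~\ref{leaf-lemma} and the hypothesis on $M$, then check a finite list. Your reformulation (every independent set dominating $M$ has size $|V(G)|-c$) is a clean way to do both the ``if'' verifications and the final check. However, the step that makes the check finite is missing.

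\textbf{The gap.} You state as an aim that $\alpha(G)\le 3$, to be obtained by attaching leaves ``in the manner of Corollary~\ref{subgraph}'' and comparing covers after adding a vertex of $V(G)\setminus M$. This is not an argument. Attaching pendants at vertices of $V(G)\setminus M$ only enlarges the family of independent sets required to have size $|V(G)|-3$. Nothing you describe turns that into an upper bound on $|V(G)|$. Without such a bound, the case split $n=4,5,6$ is unjustified.

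\textbf{The missing ingredient.} You need the classical inequality for unmixed (well-covered) graphs without isolated vertices: $|V(G)|\le 2\,\height I(G)$, equivalently $\alpha(G)\le |V(G)|/2$. This is \cite[Corollary 3.4]{gv2005}, which is exactly what the paper invokes. Your graph is connected and has a minimal vertex cover of size $3$, so it has no isolated vertices. The inequality then gives $4\le |V(G)|\le 6$ immediately. From there your plan agrees with the paper's: no leaves, then a check against the list of connected unmixed graphs on at most six vertices.

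\textbf{A minor inaccuracy.} For a fixed $T$, a minimal cover of $G_T$ may contain the pendant $z_v$ instead of $v$, so its trace on $V(G)$ need not contain $T$. After taking the union over all $T\subset V(G)\setminus M$, you get exactly the covers $D$ in which each vertex of $D\cap M$ has a neighbour outside $D$. So your final criterion is correct.
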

\begin{proof}
If $G$ is unmixed with $\height I(G)=3$, then $4\leq|V(G)|\leq6$ from \cite[Corollary 3.4]{gv2005}. By the assumption on $M$ and Lemma \ref{leaf-lemma}, $G$ has no leaves. 
The list of all the connected unmixed graphs at most 6 vertices given in \cite{VBook}. By checking one by one we can obtain the above classification.
\end{proof}

We next study the non-increasing property from the viewpoint of local cohomology. We give conditions on the graph and the multidegree under which non-vanishing is preserved from one symbolic power to the next in the same cohomological degree. We then apply this result to obtain further sufficient conditions for the symbolic depth function to be non-increasing.

\begin{lemma}\label{negative-coord}
Let $G$ be a graph, and let $k\geq1$, $i\geq0$. Suppose that $H_{\mathfrak m}^{i}(S/I(G)^{(k)})_{\bf a}\neq0$ for some ${\bf a}\in\mathbb{Z}^{n}$. If $a_{v}<0$ and $\{v,w\}\in E(G)$, then we have $H_{\mathfrak m}^{i}(S/I(G)^{(k+1)})_{{\bf a}+{\bf e}_{w}}\neq0$.  
\end{lemma}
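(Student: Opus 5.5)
The plan is to compare degree complexes directly via Takayama's formula (Theorem \ref{thm:Takayama}). I will show that
$$\Delta_{\bf a}(I(G)^{(k)})=\Delta_{{\bf a}+{\bf e}_w}(I(G)^{(k+1)})\quad\text{and}\quad \supp_{-}{\bf a}=\supp_{-}({\bf a}+{\bf e}_w).$$
Then the two local cohomology modules in the statement are isomorphic, and the nonvanishing transfers.

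\emph{Step 1: a criterion for membership in the localized symbolic power.} Write $I(G)^{(k)}=\bigcap_{C}P_C^{k}$, where $C$ runs over the minimal vertex covers of $G$, and let $F\supseteq\supp_{-}{\bf a}$. Localization commutes with finite intersections, so $I(G)^{(k)}S_F=\bigcap_C P_C^kS_F$. If $C\cap F\neq\emptyset$, then $P_C^kS_F=S_F$. If $C\cap F=\emptyset$, then $a_j\ge 0$ for all $j\in C$ (because $\supp_-{\bf a}\subseteq F$), and ${\bf x}^{\bf a}\in P_C^kS_F$ if and only if $\sum_{j\in C}a_j\ge k$, since inverting variables outside $C$ does not affect the $P_C$-adic order. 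Hence
$${\bf x}^{\bf a}\notin I(G)^{(k)}S_F\iff \text{there is a minimal vertex cover } C \text{ with } C\cap F=\emptyset \text{ and } \sum_{j\in C}a_j\le k-1.$$

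\emph{Step 2: $a_w\ge 0$, so the negative supports agree.} Suppose $a_w<0$. Then every admissible $F$ contains both $v$ and $w$. Since $\{v,w\}\in E(G)$, every vertex cover meets $F$, so by Step 1 we get ${\bf x}^{\bf a}\in I(G)^{(k)}S_F$ for every such $F$. Thus $\Delta_{\bf a}(I(G)^{(k)})$ is the void complex, and by Takayama's formula $H^i_{\mathfrak m}(S/I(G)^{(k)})_{\bf a}=0$, a contradiction. Therefore $a_w\ge0$, and consequently $\supp_-({\bf a}+{\bf e}_w)=\supp_-{\bf a}$.

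\emph{Step 3: the degree complexes coincide.} Fix $F\supseteq\supp_-{\bf a}$, so $v\in F$. Any minimal vertex cover $C$ with $C\cap F=\emptyset$ misses $v$, hence contains $w$. For such $C$,
$$\sum_{j\in C}(a+e_w)_j=\sum_{j\in C}a_j+1.$$
So the condition $\sum_{j\in C}a_j\le k-1$ holds exactly when $\sum_{j\in C}(a+e_w)_j\le k$. By Step 1, ${\bf x}^{\bf a}\notin I(G)^{(k)}S_F$ if and only if ${\bf x}^{{\bf a}+{\bf e}_w}\notin I(G)^{(k+1)}S_F$. The two degree complexes are therefore equal. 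Takayama's formula, with the same $|\supp_-|$ on both sides, gives the isomorphism and hence the claim.

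The main point needing care is Step 1: checking that localization at $F$ kills exactly the primes $P_C$ with $C\cap F\neq\emptyset$. For the remaining primes, membership is decided by the $C$-degree of the monomial. After that, the step that really uses the hypothesis is the observation that $v\in F$ forces $w\in C$.
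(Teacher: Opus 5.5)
Your proposal is correct and follows essentially the same route as the paper. The paper also notes that $\supp_{-}{\bf a}$ must be independent (your Step 2) and that every maximal independent set $A\supseteq\supp_{-}{\bf a}$ misses $w$; this gives $\Delta_{{\bf a}+{\bf e}_w}(I(G)^{(k+1)})=\Delta_{\bf a}(I(G)^{(k)})$, and Takayama's formula finishes. The only difference is presentation: you phrase and derive the membership criterion via minimal vertex covers $C$ with $C\cap F=\emptyset$, whereas the paper uses the complementary maximal independent sets without spelling that description out.
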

\begin{proof}
Since $H_{\mathfrak m}^{i}(S/I(G)^{(k)})_{\bf a}\neq0$ for some ${\bf a}\in\mathbb{Z}^{n}$, there exists a maximal independent set containing $\supp_{-}{\bf a}$, in particular, $\supp_{-}{\bf a}$ is independent. Since $v\in\supp_{-}{\bf a}$ and $\{v,w\}\in E(G)$, we know $w\notin\supp_{-}{\bf a}$. Fix a maximal independent set $A$ of $G$ with $\supp_{-}{\bf a}\subset A$. Since $v\in A$, we have $w\notin A$ and 
$$\sum_{j\notin A}(a_{j}+({\bf e}_{w})_{j})=\sum_{j\notin A}a_{j}+1.$$
Also, we have $\supp_{-}\{{\bf a}+{\bf e}_{w}\}=\supp_{-}{\bf a}$, and thus, we obtain $\Delta_{{\bf a}+{\bf e}_{w}}(I(G)^{(k+1)})=\Delta_{\bf a}(I(G)^{(k)})$. Therefore, we obtain that 
\begin{align*}
\dim_{K} H_{\mathfrak m}^{i}(S/I(G)^{(k+1)})_{{\bf a}+{\bf e}_{w}}&=\dim_{K}\widetilde{H}_{i-|\supp_{-}({\bf a}+{\bf e}_{w})|-1}(\Delta_{{\bf a}+{\bf e}_{w}}(I(G)^{(k+1)});K) \\
&=\dim_{K}\widetilde{H}_{i-|\supp_{-}{\bf a}|-1}(\Delta_{\bf a}(I(G)^{(k)});K) \\
&=\dim_{K} H_{\mathfrak m}^{i}(S/I(G)^{(k)})_{\bf a} \\
&>0,
\end{align*}
which completes the proof.

\end{proof}

We next give a local condition under which non-vanishing of local
cohomology is preserved from one symbolic power to the next in the
same cohomological degree.

\begin{thm}\label{non-vanishing}
Let $G$ be a graph on the vertex set $[n]$, $\{u,v\}\in E(G)$, and let $k\geq1$, $i\geq0$. Suppose that $N(u)\setminus\{v\}$ is a clique. Let ${\bf a}\in\mathbb{Z}^{n}$ such that $a_{u}\geq0,a_{v}\geq0$, and $0\leq a_{w}\leq a_{u}$ for each $N(u)\setminus\{v\}$. If $H_{\mathfrak m}^{i}(S/I(G)^{(k)})_{\bf a}\neq0$, then we have $H_{\mathfrak m}^{i}(S/I(G)^{(k+1)})\neq0$. 
\end{thm}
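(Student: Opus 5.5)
We follow the pattern of Lemma \ref{negative-coord}: find a shifted multidegree ${\bf b}$ with $\supp_{-}{\bf b}=\supp_{-}{\bf a}$ and $\Delta_{\bf b}(I(G)^{(k+1)})=\Delta_{\bf a}(I(G)^{(k)})$, and then conclude by Theorem \ref{thm:Takayama}. First we describe the degree complex concretely. For a symbolic power,
$${\bf x}^{\bf a}\notin I(G)^{(k)}S_F \iff \sum_{j\in C}a_j\le k-1 \text{ for some minimal vertex cover } C \text{ with } C\cap F=\emptyset .$$
This holds because $P_C^kS_F=S_F$ if $C\cap F\neq\emptyset$. Writing $A=V(G)\setminus C$ for the complementary maximal independent set, we get that $\Delta_{\bf a}(I(G)^{(k)})$ is generated by the faces $A\setminus\supp_{-}{\bf a}$, where $A$ runs over the maximal independent sets with $A\supseteq\supp_{-}{\bf a}$ and $\sum_{j\notin A}a_j\le k-1$. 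Call these the $({\bf a},k)$-admissible sets.

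The natural choice is ${\bf b}={\bf a}+{\bf e}_u+{\bf e}_v$. Since $\{u,v\}$ is an edge, every maximal independent set $A$ misses at least one of $u,v$. So $\sum_{j\notin A}b_j$ equals $\sum_{j\notin A}a_j+1$ when $A$ contains $u$ or $v$, and equals $\sum_{j\notin A}a_j+2$ when $A$ misses both. Moreover $\supp_{-}{\bf b}=\supp_{-}{\bf a}$ because $a_u,a_v\ge0$. Hence every $({\bf b},k+1)$-admissible set is $({\bf a},k)$-admissible. Conversely, every $({\bf a},k)$-admissible $A$ meeting $\{u,v\}$ is $({\bf b},k+1)$-admissible.

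The key step, and the main obstacle, is the remaining case: $A$ is $({\bf a},k)$-admissible but $u,v\notin A$. Then some vertex of $N(u)\setminus\{v\}$ lies in $A$, and since this set is a clique it is exactly one vertex $w$. We would replace $A$ by $A'=(A\setminus\{w\})\cup\{u\}$. This set is independent: the only neighbours of $u$ are $v$ and the clique, and $A$ contains neither $v$ nor any clique vertex other than $w$. Also $\sum_{j\notin A'}a_j=\sum_{j\notin A}a_j+a_w-a_u\le k-1$, using $a_w\le a_u$. Then we enlarge $A'$ to a maximal independent set $A''$. This can only decrease the sum over the complement provided the added vertices have $a_j\ge0$; if a vertex with negative coordinate would be added, we argue instead that it can be skipped, since $\supp_{-}$ vertices already lie in $A$. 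Thus $A''$ meets $\{u,v\}$ and is $({\bf b},k+1)$-admissible, and $A''\setminus\supp_{-}{\bf a}$ contains $A\setminus\{w\}$ up to the swap of $w$ for $u$.

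This exchange shows the two degree complexes need not be equal, so equality will be replaced by a homotopy or homology comparison. Our plan is to show that each facet $A\setminus\supp_{-}{\bf a}$ of $\Delta_{\bf a}(I(G)^{(k)})$ with $u,v\notin A$ can be removed, or more precisely that $\Delta_{\bf a}(I(G)^{(k)})$ deformation retracts onto $\Delta_{\bf b}(I(G)^{(k+1)})$. We would try a cone argument first: the swapped faces all contain $u$, and the faces containing $w$ are absorbed through the link of $u$. If this retraction fails in general, we fall back on the freedom the statement allows, namely nonvanishing at some degree of $S/I(G)^{(k+1)}$ rather than at ${\bf b}$. We would then try ${\bf b}={\bf a}+{\bf e}_v$, or ${\bf a}+{\bf e}_u$ combined with adjusted clique coordinates, choosing whichever makes the admissible families coincide exactly, and conclude as in Lemma \ref{negative-coord}.
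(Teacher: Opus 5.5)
Your preliminary steps are correct and are the ones the paper uses: the description of $\Delta_{\bf a}(I(G)^{(k)})$ via maximal independent sets $A\supseteq\supp_{-}{\bf a}$ with $\sum_{j\notin A}a_j\le k-1$, the inclusion $\Delta_{\bf b}(I(G)^{(k+1)})\subseteq\Delta_{\bf a}(I(G)^{(k)})$ for ${\bf b}={\bf a}+{\bf e}_u+{\bf e}_v$, and the exchange $A\mapsto(A\setminus\{w\})\cup\{u\}$. (Your worry about negative coordinates is unnecessary. Since $a_w\ge0$, we have $\supp_{-}{\bf a}\subseteq A\setminus\{w\}$, so every vertex added when enlarging has $a_j\ge0$.)

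However, the proof stops exactly where the real work begins, and the plan you propose there is false. The complex $\Delta_{\bf a}(I(G)^{(k)})$ need not retract onto $\Delta_{\bf b}(I(G)^{(k+1)})$. Worse, ${\bf b}$ need not witness any nonvanishing at all. Take $G=C_5$ with vertices $1,\dots,5$ in cyclic order, $u=1$, $v=2$ (so $N(u)\setminus\{v\}=\{5\}$), ${\bf a}={\bf 0}$, $k=1$, $i=2$. Then $\Delta_{\bf 0}(I(G))$ is the independence complex of $C_5$, a $5$-cycle, so $H^{2}_{\mathfrak m}(S/I(G))_{\bf 0}\neq0$. But $\Delta_{\bf b}(I(G)^{(2)})$ loses the facet $\{3,5\}$ and becomes a path, so $H^{2}_{\mathfrak m}(S/I(G)^{(2)})_{\bf b}=0$.

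Your fallback is not an argument either. Shifting by ${\bf e}_v$ (resp.\ ${\bf e}_u$) alone admits new facets $A\ni v$ (resp.\ $A\ni u$) with $\sum_{j\notin A}a_j=k$, and nothing forces some shift to reproduce the admissible family exactly. For instance, take $C_5$, ${\bf a}=(0,1,2,1,0)$, $k=3$. The complex has two components, the ${\bf e}_u$- and ${\bf e}_v$-shifts turn it into a path, and ${\bf b}$ changes it as well.

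The missing idea is a dichotomy on the links of the clique vertices. Put $Q=N(u)\setminus\{v\}$, $\Delta=\Delta_{\bf a}(I(G)^{(k)})$ and $d=i-|\supp_{-}{\bf a}|-1$. Your exchange argument already yields the sandwich $\Delta_{V(\Delta)\setminus Q}\subseteq\Delta_{\bf b}(I(G)^{(k+1)})\subseteq\Delta$, because a face that avoids $Q$ and lies in $A$ also lies in the enlarged set.

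\emph{First case.} Suppose $\widetilde{H}_{d-1}(\link_{\Delta}w;K)\neq0$ for some $w\in Q\cap V(\Delta)$. Change only the $w$-coordinate of ${\bf a}$ to $-1$, obtaining ${\bf c}$. Then $\Delta_{\bf c}(I(G)^{(k)})=\link_\Delta w$, so $H^{i}_{\mathfrak m}(S/I(G)^{(k)})_{\bf c}\neq0$. Lemma \ref{negative-coord}, applied to ${\bf c}$ and the edge $\{w,u\}$, then gives $H^{i}_{\mathfrak m}(S/I(G)^{(k+1)})_{{\bf c}+{\bf e}_u}\neq0$. In the $C_5$ example this is the degree ${\bf e}_1-{\bf e}_5$.

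\emph{Second case.} Otherwise, delete the vertices of $Q$ one at a time and use Mayer--Vietoris for $\Delta=\Delta_{V(\Delta)\setminus\{w\}}\cup\st_\Delta w$. The links of the remaining vertices of $Q$ do not change, because $Q$ is a clique. This shows that $\widetilde{H}_d(\Delta_{V(\Delta)\setminus Q};K)\to\widetilde{H}_d(\Delta;K)$ is surjective. Since this map factors through $\widetilde{H}_d(\Delta_{\bf b}(I(G)^{(k+1)});K)$, that group is nonzero.

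So the witness degree is ${\bf b}$ or ${\bf c}+{\bf e}_u$ depending on the case. The comparison is a surjection on homology, not an equality of complexes or a retraction.
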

\begin{proof}
Fix $k\geq1$. Set $I=I(G)$ and $\Delta=\Delta_{\bf a}(I^{(k)})$. Suppose that $H_{\mathfrak m}^{i}(S/I(G)^{(k)})_{\bf a}\neq0$. From Takayama's formula \cite[Theorem 1]{t2005}, we know $\widetilde{H}_{i-|\supp_{-}{\bf a}|-1}(\Delta;K)\neq0$. Set ${\bf b}={\bf a}+{\bf e}_{u}+{\bf e}_{v}$. Notice that ${\supp}_{-}{\bf a}=\supp_{-}{\bf b}$. We claim two inclusions $$\Delta_{(V(\Delta)\setminus (N(u)\setminus\{v\}))}\subset \Delta_{\bf b}(I^{(k+1)})\subset\Delta,$$ where $\Delta_{(V(\Delta)\setminus (N(u)\setminus\{v\}))}$ denotes the induced subcomplex on the vertex set $V(\Delta)\setminus (N(u)\setminus\{v\})$. Take a face $F\in\Delta_{\bf b}(I^{(k+1)})$. Then there exists a maximal independent set $A$ such that ${\supp}_{-}{\bf a}\subset A$, $F\subset A\setminus\supp_{-}{\bf a}$, and $\sum_{j\notin A}b_{j}\leq k$. Since $\{u,v\}\in E(G)$, we have $|\{u,v\}\cap A|\leq1$, and thus, we obtain that 
$$\sum_{j\notin A}a_{j}=\sum_{j\notin A}b_{j}-|\{u,v\}\setminus A|\leq k-1,$$
which implies that $F\subset A\setminus\supp_{-}{\bf a}\in\Delta_{\bf a}(I^{(k)})$, and thus, we obtain the desired inclusion. Also, take a face $F\in\Delta|_{(V(\Delta)\setminus(N(u)\setminus\{v\})}$. Then there exists a maximal independent set $A$ of $G$ such that ${\supp}_{-}{\bf a}\subset A$, $F\subset A\setminus\supp_{-}{\bf a}$, and $\sum_{j\notin A}a_{j}\leq k-1$. We distinguish the following cases: 

{\bf Case 1.} Suppose that $A\cap\{u,v\}\neq\emptyset$. Since $\{u,v\}\in E(G)$, we know $|\{u,v\}\cap A|=1$, and hence, we obtain $\sum_{j\notin A}b_{j}=\sum_{j\notin A}a_{j}+1\leq k$. Hence, we see that $F\in\Delta_{\bf b}(I^{(k+1)})$. 

{\bf Case 2.} Suppose that $A\cap\{u,v\}=\emptyset$. Since $A$ is a maximal independent set of $G$ and $u\notin A$, we know $A\cap N(u)\neq\emptyset$. Also, since $v\notin A$, we have $A\cap (N(u)\setminus\{v\})\neq\emptyset$. On the other hand, since $(N(u)\setminus\{v\})$ is a clique and $A$ is independent, we obtain $|A\cap(N(u)\setminus\{v\})|\leq1$, which implies that there exists unique vertex $w\in(N(u)\setminus\{v\})$ such that $A\cap(N(u)\setminus\{v\})=\{w\}$. Set $B=(A\setminus\{w\})\cup\{u\}$. Fix $p\in A\setminus\{w\}$. If $\{u,p\}\in E(G)$, then we have $p\in N(u)$, a contradiction. Hence, $B$ is independent. Since $w\notin\supp_{-}{\bf a}$ and $\supp_{-}{\bf a}\subset A$, we obtain $\supp_{-}{\bf a}\subset B$. Take a maximal independent set $A^{\prime}$ such that $B\subset A^{\prime}$. Notice that $\supp_{-}{\bf a}\subset A^{\prime}$. Since $(A^{\prime}\setminus B)\cap\supp_{-}{\bf a}=\emptyset$, we have $a_{
j}\geq0$ for all $j\in A^{\prime}\setminus B$. Hence, it follows that 
$$\sum_{j\notin A^{\prime}}a_{j}\leq\sum_{j\notin B}a_{j}=\sum_{j\notin A}a_{j}+a_{w}-a_{u}\leq\sum_{j\notin A}a_{j}\leq k-1,$$
which implies that $$\sum_{j\notin A^{\prime}}b_{j}=\sum_{j\notin A^{\prime}}a_{j}+1\leq k.$$
Therefore, we see that $A^{\prime}\setminus\supp_{-}{\bf a}\in\Delta_{\bf b}(I^{(k+1)})$. Since $F\cap(N(u)\setminus\{v\})=\emptyset$ and $w\in N(u)\setminus\{v\}$, we have $w\notin F$, and thus, we obtain that $F\subset A\setminus(\supp_{-}{\bf a}\cup\{w\})\subset A^{\prime}\setminus\supp_{-}{\bf a}$. 

From now on, we prove that $H_{\mathfrak m}^{i}(S/I(G)^{(k+1)})\neq0$. We distinguish the following cases:

{\bf Case A.} Suppose that there exists $w\in(N(u)\setminus\{v\})\cap V(\Delta)$ such that $$\widetilde{H}_{i-|\supp_{-}{\bf a}|-2}(\link_{\Delta}w;K)\neq0.$$ Define a vector ${\bf c}\in\mathbb{Z}^{n}$ as $c_{j}=-1$ if $j=w$  and $c_{j}=a_{j}$ otherwise. Notice that $\supp_{-}{\bf c}=\supp_{-}{\bf a}\cup\{w\}$ and $\Delta_{\bf c}(I^{(k)})=\link_{\Delta}w$. Hence, from Takayama's formula, we obtain $$H_{\mathfrak m}^{i}(S/I^{(k)})_{\bf c}\neq0.$$ Since $c_{u}=a_{u}\geq0$, we have $\supp_{-}({\bf c}+{\bf e}_{u})=\supp_{-}{\bf c}$. Fix a maximal independent set $A$ of $G$ such that ${\supp}_{-}{\bf c}\subset A$. Since $c_{w}=-1$, we know $w\in\supp_{-}{\bf c}\subset A$. Also, since $w\in N(u)\setminus\{v\}$, we have $\{u,w\}\in E(G)$, and hence, we have $u\notin A$. Hence, we see that $\sum_{j\notin A}({\bf c}+{\bf e}_{u})_{j}=\sum_{j\notin A}c_{j}+1$, which implies that $\Delta_{\bf c}(I^{(k)})=\Delta_{{\bf c}+{\bf e}_{u}}(I^{(k+1)})$. Therefore, we obtain that 
\begin{align*}
\dim_{K} H_{\mathfrak m}^{i}(S/I^{(k+1)})_{{\bf c}+{\bf e}_{u}}&=\dim_{K}\widetilde{H}_{i-|\supp_{-}{\bf c}|-1}(\Delta_{{\bf c}+{\bf e}_{u}}(I^{(k+1)});K) \\
&=\dim_{K}\widetilde{H}_{i-|\supp_{-}{\bf c}|-1}(\Delta_{\bf c}(I^{(k)});K) \\
&=\dim_{K} H_{\mathfrak m}^{i}(S/I^{(k)})_{\bf c} \\
&>0,
\end{align*}
which completes the proof. 

{\bf Case B.} Suppose that $\widetilde{H}_{i-|\supp_{-}{\bf a}|-2}(\link_{\Delta_{\bf a}(I^{(k)})}w;K)=0$ for any $w\in(N(u)\setminus\{v\})\cap V(\Delta)$. We claim that the induced map 
$$\iota_{*}:\widetilde{H}_{i-|\supp_{-}{\bf a}|-1}(\Delta_{V(\Delta)\setminus(N(u)\setminus\{v\})};K)\rightarrow\widetilde{H}_{i-|\supp_{-}{\bf a}|-1}(\Delta;K)$$
from the inclusion map $\iota:\Delta_{V(\Delta)\setminus(N(u)\setminus\{v\})}\rightarrow\Delta$ is surjective. 
We prove by induction on $t=|(N(u)\setminus\{v\})\cap V(\Delta)|$. If $t=0$, then we have $\Delta_{V(\Delta)\setminus(N(u)\setminus\{v\})}=\Delta$, and hence, the assertion trivially holds. Hence, assume that $t>0$ and take $w\in N(u)\setminus\{v\})\cap V(\Delta)$. Now, we have 
$$\Delta=\Delta_{V(\Delta)\setminus\{w\}}\cup\st_{\Delta}w\mbox{ and }\Delta_{V(\Delta)\setminus\{w\}}\cap\st_{\Delta}w=\link_{\Delta}w.$$
Hence, since $\st_{\Delta}w$ is a cone, from the Mayer--Vietoris exact sequence and the assumption, we obtain that 
$$\widetilde{H}_{i-|\supp_{-}{\bf a}|-1}(\Delta_{V(\Delta)\setminus\{w\}};K)\rightarrow\widetilde{H}_{i-|\supp_{-}{\bf a}|-1}(\Delta;K)$$
is surjective. Fix $z\in(N(u)\setminus\{v,w\})\cap V(\Delta)$ and $F\in\link_{\Delta}z$. If $w\in F$, then, we know that $\{w,z\}\subset F\cup\{z\}$ and $w,z$ contain a clique $N(u)\setminus\{v\}$, a contradiction. Hence, we obtain $w\notin F$, and thus, we see that $F\cup\{z\}\in\Delta_{V(\Delta)\setminus\{w\}}$. Therefore, we get $\link_{\Delta_{V(\Delta)\setminus\{w\}}}z=\link_{\Delta}z$, ane thus, we obtain that $$\widetilde{H}_{i-|\supp_{-}{\bf a}|-2}(\link_{\Delta_{V(\Delta)\setminus\{w\}}}z;K)\simeq\widetilde{H}_{i-|\supp_{-}{\bf a}|-2}(\link_{\Delta}z;K)=0\mbox{ for each }z\in N(u)\setminus\{v,w\}.$$
Then, by the induction hypothesis, we see that the inclusion map $\Delta_{V(\Delta)\setminus(N(u)\setminus\{v\})}\rightarrow\Delta_{V(\Delta)\setminus\{w\}}$ induces a surjective 
$$\widetilde{H}_{i-|\supp_{-}{\bf a}|-1}(\Delta_{V(\Delta)\setminus(N(u)\setminus\{v\})};K)\rightarrow\widetilde{H}_{i-|\supp_{-}{\bf a}|-1}(\Delta_{V(\Delta)\setminus\{w\}};K).$$
Therefore, the composition of these maps on reduced homology is exactly the map induced by the inclusion $\Delta_{V(\Delta)\setminus(N(u)\setminus\{v\})}\rightarrow\Delta$. Now, we have 
$$\widetilde{H}_{i-|\supp_{-}{\bf a}|-1}(\Delta_{V(\Delta)\setminus(N(u)\setminus\{v\})};K)\rightarrow\widetilde{H}_{i-|\supp_{-}{\bf a}|-1}(\Delta_{\bf b}(I^{(k+1)});K)\rightarrow\widetilde{H}_{i-|\supp_{-}{\bf a}|-1}(\Delta;K).$$
By the claim, we know that this composition is surjective. Since $\widetilde{H}_{i-|\supp_{-}{\bf a}|-1}(\Delta;K)\neq0$, we obtain that $\widetilde{H}_{i-|\supp_{-}{\bf a}|-1}(\Delta_{\bf b}(I^{(k+1)});K)\neq0$, which completes the proof. 
\end{proof}

Combining Lemma \ref{negative-coord} with Theorem \ref{non-vanishing}, we obtain the following graph-theoretic condition for the symbolic depth function to be non-increasing.

\begin{cor}\label{clique}
Let $G$ be a graph. If there exists $Q\subset V(G)$ with $|Q|\geq2$ such that $G|_{Q}$ is complete and $|N(u)\setminus Q|\leq1$ for every $u\in Q$. Then we have 
$$\depth S/I(G)^{(k)}\geq\depth S/I(G)^{(k+1)}\mbox{ for all }k\geq1.$$
\end{cor}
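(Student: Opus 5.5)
The plan is to set $d=\depth S/I(G)^{(k)}$ and show that $H_{\mathfrak m}^{d}(S/I(G)^{(k+1)})\neq0$. This gives $\depth S/I(G)^{(k+1)}\leq d$. Fix ${\bf a}\in\mathbb{Z}^{n}$ with $H_{\mathfrak m}^{d}(S/I(G)^{(k)})_{\bf a}\neq0$. The argument splits according to whether ${\bf a}$ has a negative coordinate on $Q$.

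\textbf{Case 1: $a_{x}<0$ for some $x\in Q$.} Since $|Q|\geq2$, we can pick $y\in Q\setminus\{x\}$. Because $G|_{Q}$ is complete, $\{x,y\}\in E(G)$. Lemma \ref{negative-coord} then gives $H_{\mathfrak m}^{d}(S/I(G)^{(k+1)})_{{\bf a}+{\bf e}_{y}}\neq0$, and we are done.

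\textbf{Case 2: $a_{x}\geq0$ for all $x\in Q$.} Choose $u\in Q$ with $a_{u}$ maximal among $\{a_{x}:x\in Q\}$. We want to apply Theorem \ref{non-vanishing} to $u$ and a suitable neighbour $v$; this needs three things.
\begin{enumerate}
\item $N(u)\setminus\{v\}$ must be a clique.
\item $a_{u},a_{v}\geq0$.
\item $0\leq a_{w}\leq a_{u}$ for every $w\in N(u)\setminus\{v\}$.
\end{enumerate}
The choice of $v$ depends on whether $u$ has a neighbour outside $Q$.
\begin{itemize}
\item If $N(u)\setminus Q=\{v\}$, then $N(u)\setminus\{v\}=Q\setminus\{u\}$. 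This set is a clique, and $0\leq a_{w}\leq a_{u}$ holds on it by maximality. Also $a_{u}\geq0$, but $a_{v}$ could be negative. If $a_{v}<0$, apply Lemma \ref{negative-coord} to the edge $\{v,u\}$, which gives $H_{\mathfrak m}^{d}(S/I(G)^{(k+1)})_{{\bf a}+{\bf e}_{u}}\neq0$. Otherwise all hypotheses of Theorem \ref{non-vanishing} hold.
\item If $N(u)\subset Q$, take $v$ to be any other vertex of $Q$. Then $N(u)\setminus\{v\}\subset Q$ is a clique, $a_{v}\geq0$, and the bound $a_{w}\leq a_{u}$ again follows from maximality.
\end{itemize}
In either subcase Theorem \ref{non-vanishing} yields $H_{\mathfrak m}^{d}(S/I(G)^{(k+1)})\neq0$.

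The main subtlety is arranging the bound $a_{w}\leq a_{u}$ on all of $N(u)\setminus\{v\}$. This is exactly why $u$ is chosen to maximize $a$ over $Q$ and why $v$ is forced to be the unique outside neighbour when one exists. The possible negative coordinate at that outside neighbour is then the only residual issue, and Lemma \ref{negative-coord} handles it. Combining the cases gives $\depth S/I(G)^{(k+1)}\leq\depth S/I(G)^{(k)}$ for all $k\geq1$.
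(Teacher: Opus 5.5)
Your proof is correct and follows the paper's argument essentially step for step. You dispose of a negative coordinate on $Q$ via Lemma \ref{negative-coord}, choose $u\in Q$ maximizing $a_u$, take $v$ to be the unique neighbour of $u$ outside $Q$ if it exists (otherwise another vertex of $Q$), handle $a_v<0$ again by Lemma \ref{negative-coord}, and otherwise apply Theorem \ref{non-vanishing}.
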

\begin{proof}
Fix $k\geq1$ and set $i=\depth S/I(G)^{(k)}$. Take ${\bf a}\in\mathbb{Z}^{n}$ such that $H_{\mathfrak m}^{i}(S/I(G)^{(k)})_{\bf a}\neq0$. If $a_{z}<0$ for some $z\in Q$, then we take $w\in Q\setminus\{z\}$. Since $\{z,w\}\in E(G)$, from Lemma \ref{negative-coord}, we obtain $H_{\mathfrak m}^{i}(S/I(G)^{(k+1)})\neq0$. Hence, we assume that $a_{z}\geq0$ for every $z\in Q$. Take $u\in Q$ such that $a_{u}=\max\{a_{z}\,\,: z\in Q\}$. If $N(u)\setminus Q\neq\emptyset$, let $v$ be its unique vertex. Otherwise, take $v\in Q\setminus\{u\}$. In either case, we have $\{u,v\}\in E(G)$ and $N(u)\setminus\{v\}\subset Q\setminus\{u\}$. If $a_{v}<0$, then, from Lemma \ref{negative-coord}, we obtain $H_{\mathfrak m}^{i}(S/I(G)^{(k+1)})\neq0$. Hence, we assume that $a_{v}\geq0$. Since $G|_{Q}$ is complete and $N(u)\setminus\{v\}\subset Q\setminus\{u\}$, we have $N(u)\setminus\{v\}$ is a clique. Moreover, we see that $0\leq a_{w}\leq a_{u}$ for every $w\in N(u)\setminus\{v\}$. Therefore, from Theorem \ref{non-vanishing}, we obtain $H_{\mathfrak m}^{i}(S/I(G)^{(k+1)})\neq0$, which completes the proof. 
\end{proof}

Taking $Q=\{u,v\}$ in Corollary \ref{clique}, we obtain the following consequence. 

\begin{cor}\label{deg2}
Let $G$ be a graph. If there exists an edge $\{u,v\}\in E(G)$ with $\deg u=\deg v=2$, then we  have $$\depth S/I(G)^{(k)}\geq\depth S/I(G)^{(k+1)}\mbox{ for all }k\geq1.$$
\end{cor}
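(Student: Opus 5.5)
The plan is to apply Corollary \ref{clique} with $Q=\{u,v\}$, as indicated just before the statement. The conclusion of Corollary \ref{clique} is exactly the desired inequality. So it suffices to check its two hypotheses for this choice of $Q$.

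First, $|Q|=2\geq2$. Since $\{u,v\}\in E(G)$, the induced subgraph $G|_{Q}$ is the single edge $K_{2}$, which is complete.

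Second, we need $|N(w)\setminus Q|\leq1$ for each $w\in Q$. Take $w=u$. Since $\deg u=2$ and $v\in N(u)$, we have $N(u)=\{v,x\}$ for a unique vertex $x\neq v$. Also $u\notin N(u)$, because $G$ is simple. Hence $N(u)\setminus Q=\{x\}$, which has cardinality one. The same argument with the roles of $u$ and $v$ exchanged gives $|N(v)\setminus Q|=1$.

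Both hypotheses of Corollary \ref{clique} therefore hold, and the inequality $\depth S/I(G)^{(k)}\geq\depth S/I(G)^{(k+1)}$ follows for all $k\geq1$. There is no genuine obstacle here. The only point needing care is that $u$ and $v$ themselves are excluded from $N(\cdot)\setminus Q$, so the degree-two condition translates exactly into the bound of at most one outside neighbor. The argument in fact works under the weaker assumption $\deg u,\deg v\leq2$.
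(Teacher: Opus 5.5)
Your proposal is correct and is exactly the paper's argument: the paper obtains this corollary by taking $Q=\{u,v\}$ in Corollary \ref{clique}, and your check of the hypotheses ($G|_{Q}\cong K_{2}$ is complete and $|N(w)\setminus Q|=1$ for $w\in Q$) is what that specialization requires. Your closing remark that $\deg u,\deg v\leq 2$ already suffices is also right, since Corollary \ref{clique} only needs $|N(w)\setminus Q|\leq 1$.
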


For the first two marked graphs in Proposition \ref{cardinality3}, we obtain the non-increasing property, improving the depth inequality in Corollary \ref{subgraph}.

\begin{cor} \label{non-increasing} 
Let $G$ be a graph. Suppose $G$ is a graph which contains one of the following $H$ as an induced subgraph:
\begin{enumerate}
\item $H\cong C_{5}$ and there exists $M\subset V(H)$ such that $V(H)\setminus M$ consists of two non-adjacent vertices and every vertex of $M$ is $H$-closed in $G$
\item $H\cong(P_{3}\sqcup K_{2})^{c}$ and there exists $M\subset V(H)$ such that $V(H)\setminus M$ consists of an endpoints of the $P_{3}$ in $H^{c}$, and every vertex of $M$ is $H$-closed in $G$. 
\end{enumerate}
Then we have 
$$\depth S/I(G)^{(k)} \geq\depth S/I(G)^{(k+1)}\mbox{ for all }k\geq1.$$
\end{cor}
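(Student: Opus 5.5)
For (1) I would reduce directly to Corollary \ref{deg2}. Label $H\cong C_5$ as $1\!-\!2\!-\!3\!-\!4\!-\!5\!-\!1$ with $V(H)\setminus M=\{1,3\}$, so $M=\{2,4,5\}$. Then $\{4,5\}$ is an edge of $H$ whose two endpoints are both $H$-closed in $G$. Hence $\deg_G 4=\deg_H 4=2$ and $\deg_G 5=2$, and Corollary \ref{deg2} gives the claim at once.

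For (2) I would imitate the proof of Corollary \ref{clique}, using Lemma \ref{negative-coord} and Theorem \ref{non-vanishing} on a suitable edge.

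\emph{Setup.} Write $H^c$ as the path $a\!-\!b\!-\!c$ together with the edge $d\!-\!e$, and take $V(H)\setminus M=\{a\}$. Then $b,c,d,e$ are $H$-closed, and in $G$ we have
$$N(b)=\{d,e\},\quad N(d)=N(e)=\{a,b,c\},\quad N(c)=\{a,d,e\}.$$
Also $\{a,c\}$ is an edge.

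\emph{Reduction to nonnegative degrees.} Fix $k$, set $i=\depth S/I(G)^{(k)}$, and pick ${\bf a}$ with $H^i_{\mathfrak m}(S/I(G)^{(k)})_{\bf a}\ne0$. If any vertex of $H$ has a negative coordinate, it has a neighbour in $H$, and Lemma \ref{negative-coord} gives $H^i_{\mathfrak m}(S/I(G)^{(k+1)})\ne0$. So assume $a_x\ge0$ for all $x\in V(H)$.

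\emph{Edges that satisfy the hypothesis of Theorem \ref{non-vanishing}.} I would use:
\begin{itemize}
\item $u=b,v=d$: here $N(b)\setminus\{d\}=\{e\}$, so the condition needed is $a_e\le a_b$.
\item $u=b,v=e$: symmetrically, the condition needed is $a_d\le a_b$.
\item $u=d,v=b$: here $N(d)\setminus\{b\}=\{a,c\}$, which is a clique, so the condition needed is $a_a,a_c\le a_d$.
\item $u=e,v=b$: likewise $N(e)\setminus\{b\}=\{a,c\}$, so the condition needed is $a_a,a_c\le a_e$.
\end{itemize}
So the argument is finished whenever $\min(a_d,a_e)\le a_b$ or $\max(a_a,a_c)\le\max(a_d,a_e)$.

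\emph{Main obstacle.} The remaining case is $a_b<a_d,a_e<\max(a_a,a_c)$. My first attempt would be to show that in this range the degree complex $\Delta_{\bf a}(I^{(k)})$ is unchanged when $a_a$ or $a_c$ is lowered to $\max(a_d,a_e)$. The reason to expect this: every maximal independent set avoiding $a$ or $c$ must contain $d$ or $e$ (or $b$), so the sums $\sum_{j\notin A}a_j$ compare favourably. One can then replace ${\bf a}$ by such a shifted degree with the same local cohomology, or at least nonvanishing local cohomology, and fall into the last two bullets.

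If that fails, the fallback is to rerun the two inclusions in the proof of Theorem \ref{non-vanishing} directly with ${\bf b}={\bf a}+{\bf e}_b+{\bf e}_d$. In Case 2 of that proof the exchange would be $A\mapsto (A\setminus\{e\})\cup\{b\}$, which is possible because $N(b)=\{d,e\}$. The task is to check that $a_e>a_b$ is compensated by the large values on $a,c$ forcing $A\cap\{a,c\}\neq\emptyset$. This case analysis on maximal independent sets meeting $H$ is where I expect the real work to lie.
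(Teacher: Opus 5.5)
Your part (1) is correct and is exactly the paper's argument.

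Part (2), however, is not yet a proof. The case $a_b<\min(a_d,a_e)$, $\max(a_d,a_e)<\max(a_a,a_c)$ is left open, and it genuinely occurs. Take $G=H$, $k=3$ and $(a_a,a_b,a_c,a_d,a_e)=(2,0,0,1,1)$. The maximal independent sets are $\{a,b\},\{b,c\},\{d,e\}$, so $\Delta_{\mathbf a}(I(G)^{(3)})=\langle\{a,b\},\{d,e\}\rangle$. This complex is disconnected, hence $H^1_{\mathfrak m}(S/I(G)^{(3)})_{\mathbf a}\neq0$, yet none of your four edges satisfies the hypotheses of Theorem \ref{non-vanishing}.

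Your first idea rests on a false claim. Lowering $a_a$ only decreases the sums $\sum_{j\notin A}a_j$ for maximal independent sets $A\not\ni a$, so new facets can appear. For instance, with $G=H$, $k=4$ and $(3,0,0,1,1)$, lowering $a_a$ to $1$ adds the facet $\{b,c\}$. Your fallback points in the right direction, but you have misidentified where the difficulty lies.

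The missing observation makes the whole case analysis unnecessary. Because $b,c,d,e$ are $H$-closed, every maximal independent set of $G$ meets $V(H)$ in exactly $\{a,b\}$, $\{b,c\}$ or $\{d,e\}$. Hence it contains exactly one of $b,d$. Equivalently, every minimal vertex cover $C$ of $G$ satisfies $|C\cap\{b,d\}|=1$. To see this directly, suppose $b,d\in C$. Minimality at $b$ and $N(b)=\{d,e\}$ force $e\notin C$, so $a,b,c\in C$. Then $N(d)=\{a,b,c\}\subseteq C$, and $C\setminus\{d\}$ is still a cover, a contradiction.

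Theorem \ref{decreasing} with $A=\{b,d\}$ and $t=1$ then gives (2) at once, with no local cohomology; this is the paper's proof. In your fallback language: with $u=b$, $v=d$, Case 2 of the proof of Theorem \ref{non-vanishing} never occurs. So $\Delta_{\mathbf{a}+\mathbf{e}_b+\mathbf{e}_d}(I(G)^{(k+1)})=\Delta_{\mathbf a}(I(G)^{(k)})$ whenever $a_b,a_d\geq 0$, and the hypothesis $a_e\le a_b$ is never used. There is no exchange $A\mapsto(A\setminus\{e\})\cup\{b\}$ to perform, and nothing for $a_a,a_c$ to compensate.
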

\begin{proof}
(1) Now, we see that $H|_{M}$ contains an edge $\{u,v\}$. Since both $u$ and $v$ are $H$-closed, we have $\deg_{G}u=\deg_{G}v=2$, and thus, from Corollary \ref{deg2}, the assertion follows. 

(2) Write $V(H)=\{a,b,c,d,e\}$ and $E(H^{c})=\{\{a,b\},\{b,c\},\{d,e\}\}$, where $a$ is the unique vertex $a\in V(H)\setminus M$. From Theorem \ref{decreasing}, it is verify that $|C\cap\{b,d\}|=1$ for every minimal vertex $C$ of $G$. Fix a minimal vertex cover $C$ of $G$. Suppose $b,d\in C$. Then, by the minimality of $C$ and $N_{G}(b)=\{d,e\}$, we know $e\notin C$, and hence, we get $a,c\in C$. Hence, we obtain that $N_{G}(d)=\{a,b,c\}\subset C$, which is a contradiction. Therefore, from Theorem \ref{decreasing}, the assertion follows. 
\end{proof}

The authors of this paper \cite[Definition 5.1]{mt2026} introduced multi-clique corona graphs. We obtain the following result on the depth of their symbolic powers. Let us recall the definition of multi-clique corona graphs. 
For a graph $G$ on the vertex set $X_{[h]}$ and positive integers $n_{1},\ldots, n_{h}$, we set 
$$\mathcal{H}=\bigcup_{1\leq i\leq h}\{H_{i}^{1},\ldots, H_{i}^{n_{i}}\},$$
where $H_{i}^{j}$ is a nonempty graph indexed by the vertex $x_{i}$. We define the {\it multi-corona graph} $G\circ\mathcal{H}$ of $G$ and $\mathcal{H}$ is the disjoint union of $G$ and $H_{i}^{1},\ldots, H_{i}^{n_{i}}$, with additional edges joining each vertex $x_{i}$ to all vertices $H_{i}^{j}$. 
We call $G\circ\mathcal{H}$ is {\it multi-clique corona graph}, if $H_{i}^{j}$ is the complete graph for all $i$ and $j$. The next result follows from Corollaries \ref{simplicial} and \ref{clique}.

\begin{cor}\label{clique-corona}
Let $G$ be a graph on the vertex set $X_{[h]}$ and positive integers $n_{1},\ldots, n_{h}$ and let $\mathcal{H}=\bigcup_{1\leq i\leq h}\{H_{i}^{1},\ldots, H_{i}^{n_{i}}\}$. If $G\circ\mathcal{H}$ is a multi-clique corona graph, then we have
$$\depth S/I(G\circ\mathcal{H})^{(k)}\geq\depth S/I(G\circ\mathcal{H})^{(k+1)}\mbox{ for all }k\geq1.$$
\end{cor}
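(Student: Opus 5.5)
The plan is to locate, inside the multi-clique corona graph $\Gamma=G\circ\mathcal{H}$, a local configuration to which Corollary \ref{clique} or Corollary \ref{simplicial} applies directly. Fix any vertex $x_i$ of $G$ (there is at least one, and $n_i\geq1$) and consider the complete graph $H_i^{1}$ attached to it. Every vertex $y\in V(H_i^1)$ has neighborhood $N_\Gamma(y)=(V(H_i^1)\setminus\{y\})\cup\{x_i\}$, since by construction the only edges leaving $H_i^1$ go to $x_i$.

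I would split according to the size of $H_i^1$. If $|V(H_i^1)|\geq2$, set $Q=V(H_i^1)$. Then $\Gamma|_Q$ is complete, $|Q|\geq2$, and for each $u\in Q$ we have $N_\Gamma(u)\setminus Q=\{x_i\}$, so $|N(u)\setminus Q|\le 1$. Corollary \ref{clique} then gives $\depth S/I(\Gamma)^{(k)}\geq\depth S/I(\Gamma)^{(k+1)}$ for all $k\geq1$. Alternatively, one could take $Q=V(H_i^1)\cup\{x_i\}$ only if $x_i$ had no other neighbours, but this is unnecessary.

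If instead $|V(H_i^1)|=1$, say $V(H_i^1)=\{y\}$, then $y$ is a leaf adjacent to $x_i$, that is, a simplicial vertex of degree $p=1$. Corollary \ref{simplicial} with $p=1$ yields exactly the inequality $\depth S/I(\Gamma)^{(k)}\geq\depth S/I(\Gamma)^{(k+1)}$. Equivalently, $Q=\{y,x_i\}$ would not in general satisfy the hypothesis of Corollary \ref{clique}, since $x_i$ may have many neighbours outside $Q$; this is why the leaf case is routed through Corollary \ref{simplicial}.

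I expect no serious obstacle. The only point to check carefully is the neighborhood computation in $\Gamma$, namely that vertices of different $H_i^j$ are not adjacent and that each $H_i^j$ is joined only to $x_i$. This follows directly from the definition of the multi-corona graph. The case split on $|V(H_i^1)|$ is what makes the argument uniform.
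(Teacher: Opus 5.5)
Your proposal is correct and is essentially the paper's own proof. You make the same case split on the size of an attached clique $H_i^j$: when it is a single vertex, that vertex is a leaf, i.e.\ a simplicial vertex of degree $1$, and Corollary~\ref{simplicial} applies with $p=1$; when it has at least two vertices, Corollary~\ref{clique} applies with $Q=V(H_i^j)$, using $N(v)=(Q\setminus\{v\})\cup\{x_i\}$ for every $v\in Q$.
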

\begin{proof}
Take an attached clique $H_{i}^{j}$ of $G$. If $|V(H_{i}^{j})=1|$, then the assertion follows from Corollary \ref{simplicial}. Suppose that $|V(H_{i}^{j})|\geq2$, and set $Q=V(H_{i}^{j})$. Then, $G\circ\mathcal{H}|_{Q}$ is complete, and we have $N_{G\circ\mathcal{H}}(v)=(Q\setminus\{v\})\cup\{x_{i}\}$ for all $v\in Q$. Therefore, from Corollary \ref{clique}, the assertion follows.   
\end{proof}

In the rest of this section, we discuss a conjecture posed by Minh, T.N. Trung, and Vu \cite[Conjecture 1.4]{mtv2024} concerning the stable value of the depth of symbolic powers of edge ideals. We first recall the necessary definitions. 

Let $G$ be a graph, and let $\mathcal{B}(G)$ be the set of maximal induced bipartite subgraphs of $G$. For $H\in\mathcal{B}(G)$, write $H=H_{1}\sqcup\cdots\sqcup H_{c}\sqcup \{p_{1},\ldots p_{t}\}$, where $H_{1},\ldots, H_{c}$ are the connected components containing an edge and $\{p_{1},\ldots p_{t}\}$ is the set of isolated vertices. $p_{1},\ldots, p_{t}$ are called {\it clustered} if there exists $u\in V(G)\setminus V(H)$ such that the subgraph induced by $\{u,p_{1},\ldots, p_{t}\}$ is a bouquet, where a bouquet is a star graph. Let $\bou_{G}(H)$ be the least number of bouquets needed to cluster all isolated vertices of $H$ and $0$ if $t=0$. The number of restricted connected components of $H$ is $c^{\prime}(H)=c+\bou_{G}(H)$. The restricted bipartite connectivity number of $G$ is 
$$\bc^{\prime}(G)=\min\{c^{\prime}(H)\,\,: H\in\mathcal{B}(G)\}.$$
Minh, T.N. Trung, and Vu proposed the following conjecture. 

\begin{conj}[\cite{mtv2024}, Conjecture 1.4]\label{conj for lim}
Let $G$ be a graph. Then we have 
$$\lim_{k\rightarrow\infty}\depth S/I(G)^{(k)}=\bc^{\prime}(G).$$
\end{conj}

The following example provide a counterexample to Conjecture \ref{conj for lim} and shows that the difference between the restricted bipartite connectivity number and the stable value of the symbolic depth function can be arbitrary large. 

\begin{thm}\label{counterexample}
Let $p\geq2$, and let $G=W(K_{p,p,p})$ be the whisker graph of the complete multipartite graph $K_{p,p,p}$. Then we have 
$$\lim_{k\rightarrow\infty}\depth S/I(G)^{(k)}\leq 2<p+1=\bc^{\prime}(G).$$
\end{thm}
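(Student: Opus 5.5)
The statement has two parts: the upper bound $\lim_{k\to\infty}\depth S/I(G)^{(k)}\le 2$, and the equality $\bc'(G)=p+1$. I will prove the first by exhibiting nonvanishing local cohomology via Takayama's formula, and the second by a direct combinatorial analysis of $\mathcal B(G)$.

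\emph{Notation.} Write the three parts of $K_{p,p,p}$ as $V_1,V_2,V_3$. For a base vertex $x$, let $x'$ be its whisker, and for a set $W$ of base vertices put $W'=\{x':x\in W\}$. An independent set of $K_{p,p,p}$ lies inside a single part. Hence the maximal independent sets of $G$ are of two kinds: $A_T=T\cup(V(K_{p,p,p})\setminus T)'$ with $\emptyset\neq T\subset V_i$ for some $i$, and $A_\emptyset$, the set of all whiskers.

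\emph{Upper bound on the limit.} Fix $m\ge1$ and set $k=2pm+1$. Let ${\bf a}$ have $a_x=m$ on every base vertex $x$ and $a_y=0$ on every whisker, so $\supp_-{\bf a}=\emptyset$. Faces of $\Delta_{\bf a}(I^{(k)})$ are the subsets of those maximal independent sets $A$ with $\sum_{j\notin A}a_j\le k-1=2pm$. For $A_T$ with $T\subset V_i$ this sum is $m(3p-|T|)$, so the condition holds exactly when $T=V_i$. For $A_\emptyset$ the sum is $3pm$, which is too large. Thus $\Delta$ has the three facets $F_i=V_i\cup(V_j\cup V_l)'$, where $\{i,j,l\}=\{1,2,3\}$. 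Their pairwise intersections are the nonempty simplices $V_l'$, and the triple intersection is empty. By the nerve lemma, $\Delta$ is homotopy equivalent to the boundary of a triangle, i.e.\ a circle, so $\widetilde H^1(\Delta;K)\neq0$. Takayama's formula (Theorem \ref{thm:Takayama}) with $|\supp_-{\bf a}|=0$ then gives $H^2_{\mathfrak m}(S/I^{(2pm+1)})_{\bf a}\neq0$. Hence $\depth S/I^{(k)}\le2$ for infinitely many $k$. Since the symbolic depth function is eventually constant, the limit is at most $2$.

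\emph{Computation of $\bc'(G)$.} Let $H\in\mathcal B(G)$.
\begin{enumerate}
\item Every whisker can be added to an induced bipartite subgraph, since a leaf never creates an odd cycle. So $H$ contains all whiskers.
\item The base vertices of $H$ cannot meet all three parts, since that would produce a triangle. If they meet only one part, any vertex of another part can still be added without creating an odd cycle. So by maximality the base vertices of $H$ are exactly $V_i\cup V_j$ for some $i\neq j$.
\end{enumerate}
Conversely, each such set together with all whiskers induces a bipartite graph, and it is maximal. The resulting $H$ has one connected component containing an edge: $K_{p,p}$ on $V_i\cup V_j$ with its whiskers attached. Its isolated vertices are the $p$ whiskers in $V_l'$. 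A bouquet centre $u\notin V(H)$ must lie in $V_l$, and such a $u$ is adjacent to exactly one isolated vertex, namely its own whisker. Therefore $\bou_G(H)=p$ and $c'(H)=1+p$ for every $H\in\mathcal B(G)$, so $\bc'(G)=p+1>2$ because $p\ge2$.

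The main point needing care is the nerve argument. Each facet and each nonempty intersection of facets is a simplex, hence contractible, so the cover by the three facets is a good cover. Its nerve is the boundary of a 2-simplex, which gives $\Delta\simeq S^1$.
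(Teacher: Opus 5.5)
Your proof is correct and follows essentially the paper's argument: the same multidegree (the paper uses $m=1$, i.e.\ $k=2p+1$), the same three facets whose first homology you get from the nerve lemma instead of Mayer--Vietoris, Takayama's formula in cohomological degree $2$, and the same bouquet count giving $\bc'(G)=p+1$. The one small difference is that you scale ${\bf a}$ to get $\depth S/I(G)^{(k)}\le 2$ for infinitely many $k$ and then use eventual constancy, whereas the paper takes a single $k$ and invokes $\lim=\min$ from \cite{hktt2017}.
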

\begin{proof}
Write $V(K_{p,p,p})=X_{1}\sqcup X_{2}\sqcup X_{3}$ and $X_{i}=\{x_{i,1},\ldots, x_{i,p}\}$ for $1\leq i\leq 3$. For each $i,j$, let $y_{i,j}$ be the leaf adjacent only to $x_{i,j}$, and put $Y_{i}=\{y_{i,1},\ldots, y_{i,p}\}$. 

First, we verify that $\bc^{\prime}(G)=p+1$. Let $H\in\mathcal{B}(G)$. By maximality, we have $V(H)=X_{i}\cup X_{j}\cup Y_{1}\cup Y_{2}\cup Y_{3}$ for some distinct $i,j\in\{1,2,3\}$. Write $\{\ell\}=\{1,2,3\}\setminus\{i,j\}$. Then $H$ has exactly one connected component containing an edge, and its isolated vertices are precisely the $p$ vertices of $Y_{\ell}$. Since each vertex of $V(G)\setminus V(H)=X_{\ell}$ is adjacent to exactly one vertex of $Y_{\ell}$, we have $\bou_{G}(H)=p$. Thus, $c^{\prime}(H)=p+1$, and hence $\bc^{\prime}(G)=p+1$. 

Next, we prove that $\lim_{k\rightarrow\infty}\depth S/I(G)^{(k)}\leq 2$. Define a vector ${\bf a}\in\mathbb{N}^{|V(G)|}$ as $a_{x_{i,j}}=1$ and $a_{y_{i,j}}=0$ for every $i$ and $j$. The maximal independent set of $G$ are the form $A\cup\{y_{i,j}\,\,:x_{i,j}\notin A\}$, where $A$ ranges over the independent sets of $K_{p,p,p}$. Since $\sum_{u\notin A\cup\{y_{i,j}\,\,:x_{i,j}\notin A\}}a_{u}=3p-|A|$ and $|A|\leq p$, we see that $\sum_{u\notin A\cup\{y_{i,j}\,\,:x_{i,j}\notin A\}}a_{u}\leq 2p$ if and only if $A=X_{1},X_{2}$, or $X_{3}$. Hence, we obtain $$\Delta_{\bf a}(I(G)^{(2p+1)})=\langle X_{1}\cup\bigcup_{j\neq1}Y_{j}\rangle\cup\langle X_{2}\cup\bigcup_{j\neq2}Y_{j}\rangle\cup\langle X_{3}\cup\bigcup_{j\neq3}Y_{j}\rangle.$$
Set $\Gamma_{i}=\langle X_{i}\cup\bigcup_{j\neq i}Y_{j}\rangle$ for $1\leq i\leq 3$. Then, Since $\Gamma_{1}\cap\Gamma_{2}=\langle Y_{3}\rangle$, we know that $\Gamma_{1}\cup\Gamma_{2}$ is connected, which implies that $\widetilde{H}_{0}(\Gamma_{1}\cup\Gamma_{2};K)=0$. Also, we see that $(\Gamma_{1}\cup\Gamma_{2})\cap\Gamma_{3}=\langle Y_{1}\rangle\sqcup\langle Y_{2}\rangle$ is disconnected. Since $\Gamma_{3}$ is connected, from the Mayer--Vietoris exact sequence, we see that $$\widetilde{H}_{1}(\Delta_{\bf a}(I(G)^{(2p+1)});K)\rightarrow\widetilde{H}_{0}((\Gamma_{1}\cup\Gamma_{2})\cap\Gamma_{3};K)$$
is surjective. Since $\widetilde{H}_{0}((\Gamma_{1}\cup\Gamma_{2})\cap\Gamma_{3};K)\neq0$, we obtain $\widetilde{H}_{1}(\Delta_{\bf a}(I(G)^{(2p+1)});K)\neq0$. From Takayama's formula, we get $H_{\mathfrak m}^{2}(S/I(G)^{(2p+1)})_{\bf a}\neq0$, and thus, from \cite[Theorem 2.4]{hktt2017}, we obtain that $$\lim_{k\rightarrow\infty}\depth S/I(G)^{(k)}=\min_{k\geq1}\depth S/I(G)^{(k)}\leq2,$$
which completes the proof. 
\end{proof}


\section{depth comparison between ordinary and symbolic powers of $I(G)$}\label{sec4}

In this section, we discuss the following conjecture posed by the first-named author of this paper using local structure of graphs. We give a sufficient condition involving a leaf whose neighbor is weakly simplicial vertex. Our sufficient condition reduces the depth comparison to certain smaller induced subgraphs. As an application, we prove the conjecture for graphs whose connected components are unicyclic or bipartite. 

\begin{conj}[\cite{m2026}, Conjecture 1.6]\label{Conj by Muta}
Let $G$ be a graph. Then we have 
$$\depth S/I(G)^{(k)}\geq\depth S/I(G)^{k}\mbox{ for all }k\geq1.$$
\end{conj}

We begin with two colon identities for ordinary and symbolic powers at a weakly simplicial vertex. Here, a vertex $x$ of a graph $G$ is called {\it weakly simplicial }if the open neighborhood $N(x)$ of $x$ is an independent set and, for any $u,v\in N(x)$, either $N(u)\subset N(v)$ or $N(v)\subset N(u)$.

\begin{lemma}\label{colon-lemma}
Let $G$ be a graph. Suppose that $x$ is a weakly simplicial vertex. Let $u\in N(x)$ such that $N(u)\subset N(v)$ for all $v\in N(x)$. Then, for every $k\geq1$, we have 
\begin{enumerate}
\item $(I(G)^{k}:xu)=I(G)^{k-1}$.
\item $(I(G)^{(k)}:xu)=I(G)^{(k-1)}$, 
\end{enumerate}
where we set $I(G)^{0}=I(G)^{(0)}=S$. 
\end{lemma}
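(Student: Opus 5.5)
We prove the two identities separately. Part (2) reduces, as in the proof of Theorem \ref{decreasing}, to a statement about minimal vertex covers. Part (1) is a direct manipulation of products of edge monomials. In both parts the inclusion ``$\supseteq$'' is immediate. Since $\{x,u\}\in E(G)$, we have $xu\in I(G)$, so $xu\cdot I(G)^{k-1}\subset I(G)^{k}$. Likewise $xu\cdot P_C^{k-1}\subset P_C^{k}$ for every minimal prime $P_C$, because every vertex cover $C$ meets $\{x,u\}$.

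For (2), write $I(G)^{(k)}=\bigcap_C P_C^{k}$ over the minimal vertex covers $C$. For a monomial prime we have $P_C^{k}:xu=P_C^{k-|C\cap\{x,u\}|}$, exactly as computed in the proof of Theorem \ref{decreasing}. So it suffices to show $|C\cap\{x,u\}|=1$ for every minimal vertex cover $C$; the colon then commutes with the intersection and yields $\bigcap_C P_C^{k-1}=I(G)^{(k-1)}$.

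The bound $|C\cap\{x,u\}|\ge 1$ holds because $xu$ is an edge. Suppose $x,u\in C$. By minimality of $C$ at $x$, there is a vertex $v\in N(x)\setminus C$, and then $N(v)\subset C$. Since $u\in C$ and $v\notin C$, we have $v\neq u$. By minimality at $u$, there is $w\in N(u)\setminus C$. But the hypothesis gives $N(u)\subset N(v)\subset C$, which is a contradiction.

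For (1), take a monomial $m$ with $xu\,m\in I(G)^{k}$, and write $xu\,m=e_1\cdots e_k f$ with edge monomials $e_i$ and a monomial $f$. Fix one copy of $x$ and one copy of $u$ on the right-hand side that are cancelled against $xu$. We split into cases according to where these copies sit.
\begin{enumerate}
\item Both copies lie in the same edge. That edge is then $e_i=xu$, and $m=\prod_{j\ne i}e_j\,f\in I(G)^{k-1}$.
\item Exactly one copy lies in an edge, say $x$ in $e_1$, while $u$ divides $f$ (or symmetrically). Then $m$ is divisible by $e_2\cdots e_k$, so $m\in I(G)^{k-1}$.
\item Neither copy lies in an edge. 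Then $m$ is divisible by $e_1\cdots e_k$, so in fact $m\in I(G)^{k}$.
\item The copies lie in two different edges, $e_1=xv$ with $v\in N(x)$ and $e_2=uw$ with $w\in N(u)$. If $v=u$ or $w=x$, we are back in case 1, so assume $v\neq u$ and $w\ne x$. The hypothesis $N(u)\subset N(v)$ gives $w\in N(v)$, so $vw$ is an edge monomial. Then $e_1e_2=xu\cdot vw$, hence $m=vw\,e_3\cdots e_k f\in I(G)^{k-1}$.
\end{enumerate}

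The only delicate point is case 4, where the edge $vw$ must be exchanged in. This is exactly where the choice of $u$ with $N(u)\subset N(v)$ for all $v\in N(x)$ is used. Independence of $N(x)$ guarantees $v\neq u$ in the relevant subcase, and the degenerate choices $w=x$ or $v=u$ fall into case 1. The cases $k=1$ are covered by the convention $I(G)^{0}=I(G)^{(0)}=S$.
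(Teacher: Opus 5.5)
Your proof is correct and follows the paper's argument. For (2) you show $|C\cap\{x,u\}|=1$ by producing $v\in N(x)\setminus C$ and $w\in N(u)\setminus C$ and using $N(u)\subset N(v)$; for (1) your case split by the location of the cancelled copies of $x$ and $u$ matches the paper's split (whether some edge equals $xu$, and whether $xu$ divides $\prod_i a_ib_i$), and it rests on the same exchange $xv\cdot uw=xu\cdot vw$. One small correction: independence of $N(x)$ is never used, neither by you nor by the paper, so your closing remark attributing $v\neq u$ to it is misplaced. In (1) the case $v=u$ is dispatched by reducing to case 1, and in (2) $v\neq u$ follows from $u\in C$ and $v\notin C$.
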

\begin{proof}
(1) Since $xu\in I(G)$, we have $(I(G)^{k}:xu)\supset I(G)^{k-1}$. Fix a monomial $m\in(I(G)^{k}:xu)$. Then, since $xum\in I(G)^{k}$, there exist edges $\{a_{i},b_{i}\}\in E(G)$ for $1\leq i\leq k$ such that $\prod_{i}a_{i}b_{i}$ divides $xum$. We distinguish the following cases:

{\bf Case 1.} Suppose that there exists $1\leq s\leq k$ such that $a_{s}b_{s}=xu$. Then, we see that $\prod_{i\neq s}a_{i}b_{i}$ divides $m$, and hence, we obtain $m\in I(G)^{k-1}$. 

{\bf Case 2.} Suppose that $a_{i}b_{i}\neq xu$ for every $i$ and $xu$ divides $\prod_{i}a_{i}b_{i}$. Then there exist $1\leq s,t\leq k$ with $s\neq t$ and $v, w\in V(G)$ such that $a_{s}b_{s}=xv$ and $a_{t}b_{t}=uw$. Since $v\in N(x)$ and the assumption, we know that $w\in N(u)\subset N(v)$, which implies that $\{v,w\}\in E(G)$. Hence, we see that $vw\prod_{i\neq s,t}a_{i}b_{i}=(\prod_{i}a_{i}b_{i})/xu$ divides $m$, and thus, $m\in I(G)^{k-1}$. 

{\bf Case 3.} Suppose that $a_{i}b_{i}\neq xu$ for every $i$ and $xu$ does not divide $\prod_{i}a_{i}b_{i}$. Take $z\in\{x,u\}$. If $z$ divides $\prod_{i}a_{i}b_{i}$, then we see that $\prod_{i\neq s}a_{i}b_{i}$ divides $(\prod_{i}a_{i}b_{i})/z$ for some $s$ and $(\prod_{i}a_{i}b_{i})/z$ divides $m$, and hence, we obtain $m\in I(G)^{k-1}$. Hence, we assume that $x$ and $u$ do not divide $\prod_{i}a_{i}b_{i}$. In this case, by the fact that $\prod_{i}a_{i}b_{i}$ divides $xum$, we know that $\prod_{i\neq s}a_{i}b_{i}$ divides $m$ for any $s$, which implies that $m\in I(G)^{k-1}$. 

(2) Let $C$ be a minimal vertex cover of $G$. Since $\{x,u\}\in E(G)$, we know $|C\cap\{x,u\}|\geq1$. Suppose that $x,u\in C$. Since $C\setminus\{x\}$ and $C\setminus\{u\}$ are not a vertex cover, there exist vertices $v\in N(x)\setminus C$ and $w\in N(u)\setminus C$. By assumption, we know $w\in N(u)\subset N(v)$, this means that $\{v,w\}\in E(G)$, a contradiction. Hence, we obtain $|C\cap\{x,u\}|=1$. For any monomial $m$, we know that $xum\in P_{C}^{k}$ if and only if $\sum_{v\in C}\deg_{v}(m)+1\geq k$. The latter one is equivalent to the condition that $m\in P_{C}^{k-1}$. Let $\Min(G)$ be the set of minimal vertex covers of $G$. Then, we obtain $(P_{C}^{k}:xu)=P_{C}^{k-1}$, and thus, we get 
$$(I(G)^{(k)}:xu)=\bigcap_{C\in\Min(G)}(P_{C}^{k}:xu)=\bigcap_{C\in\Min(G)}P_{C}^{k-1}=I(G)^{(k-1)},$$
which completes the proof. 
\end{proof}

To obtain a depth formula for ordinary powers, we use Eliahou--Kervaire splittings. We first recall the definition introduced in \cite{ek1990}. Let $I,J$, and $L$ be monomial ideals of $S$ such that $\mathcal{G}(I)=\mathcal{G}(J)\sqcup\mathcal{G}(L)$. Then, $I=J+L$ is an {\it Eliahou--Kervaire splitting} if there exists a splitting function 
$$\mathcal{G}(J\cap L)\rightarrow\mathcal{G}(J)\times\mathcal{G}(L)$$
sending $w\mapsto(\phi(w),\psi(w))$ such that 
\begin{enumerate}
\item $w=\lcm(\phi(w),\psi(w))$ for all $w\in\mathcal{G}(J\cap L)$, and 
\item for every subset $\emptyset\neq W\subset\mathcal{G}(J\cap L)$, $\lcm(\phi(W))$ and $\lcm(\psi(W))$ strictly divide $\lcm(W)$. 
\end{enumerate}

An Eliahou--Kervaire splitting gives the following formula for projective dimension. 

\begin{prop}[\cite{fhv2009}, Corollary 2.2]\label{pd formula}
Let $I,J$, and $L$ be monomial ideals of $S$. If $I=J+L$ is an Eliahou--Kervaire splitting, then we have $\pd I=\max\{\pd J, \pd L, \pd(J\cap L)+1\}$.
\end{prop}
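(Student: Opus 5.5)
The plan is to derive the formula from the stronger \emph{Betti splitting} identity
$$\beta_{i,\mathbf{m}}(I)=\beta_{i,\mathbf{m}}(J)+\beta_{i,\mathbf{m}}(L)+\beta_{i-1,\mathbf{m}}(J\cap L)\quad\mbox{for all } i\geq0,\ \mathbf{m}\in\mathbb{N}^{n},$$
where $\beta_{i,\mathbf{m}}(M)=\dim_K\operatorname{Tor}_i^S(M,K)_{\mathbf{m}}$. Once this holds, $\pd I$, the largest $i$ with some $\beta_{i,\mathbf{m}}(I)\neq0$, equals $\max\{\pd J,\pd L,\pd(J\cap L)+1\}$. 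This is exactly Proposition \ref{pd formula}.

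The Betti identity comes from the short exact sequence of multigraded modules
$$0\to J\cap L\xrightarrow{\ \alpha\ } J\oplus L\to I\to0,$$
where $\alpha(f)=(f,-f)$ and the surjection is $(g,h)\mapsto g+h$. Surjectivity holds because $\mathcal{G}(I)=\mathcal{G}(J)\sqcup\mathcal{G}(L)$. The associated long exact sequence of $\operatorname{Tor}^S(-,K)$ splits into short exact sequences
$$0\to\operatorname{Tor}_i(J,K)\oplus\operatorname{Tor}_i(L,K)\to\operatorname{Tor}_i(I,K)\to\operatorname{Tor}_{i-1}(J\cap L,K)\to0$$
precisely when every induced map $\operatorname{Tor}_i(J\cap L,K)\to\operatorname{Tor}_i(J,K)\oplus\operatorname{Tor}_i(L,K)$ is zero. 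Counting dimensions in each multidegree then gives the identity. So the whole task reduces to showing that the two inclusion maps $J\cap L\hookrightarrow J$ and $J\cap L\hookrightarrow L$ induce zero on $\operatorname{Tor}$.

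To see this, I would compute $\operatorname{Tor}$ with Taylor resolutions $T(J\cap L)$, $T(J)$ and $T(L)$. I would build an explicit multigraded chain map lifting the inclusion $J\cap L\hookrightarrow J$ using the splitting function. On the basis element $e_W$, for $W\subset\mathcal{G}(J\cap L)$, set
$$e_W\mapsto\frac{\lcm(W)}{\lcm(\phi(W))}\,e_{\phi(W)}$$
when $\phi$ is injective on $W$, and $e_W\mapsto0$ otherwise. Define the map to $T(L)$ analogously with $\psi$. In homological degree $0$, condition (1) ($w=\lcm(\phi(w),\psi(w))$) shows this lifts the inclusion. Condition (2) says $\lcm(\phi(W))$ strictly divides $\lcm(W)$, so every coefficient is a monomial in $\mathfrak m$. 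Hence the chain map becomes zero after tensoring with $K$. Since any two lifts of the same module map are homotopic, the induced map on $\operatorname{Tor}_i(-,K)=H_i(T(-)\otimes K)$ is zero, as required. This works even though Taylor complexes are not minimal, because the induced map on homology does not depend on the chosen lift.

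The main obstacle I expect is checking that this formula really is a chain map. This means comparing the Taylor differentials $d(e_W)=\sum_{w\in W}\pm\frac{\lcm(W)}{\lcm(W\setminus w)}e_{W\setminus w}$ on both sides. The delicate points are the signs, which require an ordering of $\mathcal{G}(J)$ compatible with $\phi$, and the cases where $\phi$ collapses elements so that the faces $W\setminus w$ map to sets of smaller size. If the direct verification becomes unwieldy, I would instead work one multidegree at a time. By Taylor's construction, $\operatorname{Tor}_i(J\cap L,K)_{\mathbf m}$ can be nonzero only for $\mathbf m=\lcm(W)$ with $W\subset\mathcal{G}(J\cap L)$. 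Any degree-preserving lift then sends $e_W$ into the span of basis elements $e_U$ with $\lcm(U)\mid\lcm(W)$. I would then use condition (2), via the homotopy freedom in choosing the lift, to arrange that every such $e_U$ has $\lcm(U)$ strictly dividing $\lcm(W)$. Those terms carry nonunit coefficients, so they vanish after tensoring with $K$.
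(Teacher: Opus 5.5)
The paper gives no proof of this statement and simply imports it from \cite{fhv2009}. The cited result rests on the same two steps you use: an Eliahou--Kervaire splitting is a Betti splitting because the maps $\operatorname{Tor}_i(J\cap L,K)\to\operatorname{Tor}_i(J,K)\oplus\operatorname{Tor}_i(L,K)$ vanish, and $\pd$ is then read off from the Betti identity. Your argument is correct and is essentially that standard proof.

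The step you flag as the main obstacle goes through directly, so the fallback argument is not needed. Your map is the homogenization of the simplicial chain map induced by $\phi$ on the full simplex on $\mathcal{G}(J\cap L)$. In both $dF(e_W)$ and $F(de_W)$ the coefficient of $e_U$ is an integer times $\lcm(W)/\lcm(U)$, so $dF=Fd$ reduces to the simplicial identity. For the signs, order $\mathcal{G}(J\cap L)$ so that $\phi$ is weakly monotone. Then $F$ carries no signs, and when $\phi$ identifies exactly two elements of $W$ those elements are adjacent, so the two corresponding faces cancel.
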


Under the assumptions of Lemma \ref{colon-lemma}, we decompose $(I(G)^{k}:x)$ by deleting the vertex $u$. We prove that the decomposition is an Eliahou--Kervaire splitting. 

\begin{prop}\label{EK-splitting}
Let $G$ be a graph and let $x$ be a weakly simplicial vertex. Let $u\in N(x)$ such that $N(u)\subset N(v)$ for all $v\in N(x)$. Then, for every $k\geq2$, we have 
$$(I(G)^{k}:x)=(I(G-u)^{k}:x)+uI(G)^{k-1}\mbox{ and }(I(G-u)^{k}:x)\cap uI(G)^{k-1}=u(I(G-u)^{k}:x).$$
Moreover, $(I(G)^{k}:x)=(I(G-u)^{k}:x)+uI(G)^{k-1}$ is an Eliahou--Kervaire splitting if $(I(G-u)^{k}:x)\neq0$. 
\end{prop}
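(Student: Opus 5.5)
The plan is to prove the two equalities by direct monomial arguments, using Lemma \ref{colon-lemma}(1) as the key input. After that, I will construct an explicit splitting function and check the Eliahou--Kervaire axioms. All ideals involved are monomial ideals, so it suffices to compare monomials.

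For the first equality, the inclusion $\supseteq$ has two parts. First, $I(G-u)^k\subset I(G)^k$ gives $(I(G-u)^k:x)\subset(I(G)^k:x)$. Second, Lemma \ref{colon-lemma}(1) gives $uI(G)^{k-1}=u(I(G)^k:xu)\subset(I(G)^k:x)$. For $\subseteq$, take a monomial $m$ with $xm\in I(G)^k$.
\begin{itemize}
\item If $u\mid m$, write $m=um'$. Then $xum'\in I(G)^k$, so $m'\in I(G)^{k-1}$ by Lemma \ref{colon-lemma}(1), and hence $m\in uI(G)^{k-1}$.
\item If $u\nmid m$, then $u\nmid xm$, since $x\neq u$. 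So none of the $k$ edge monomials dividing $xm$ involves $u$, and therefore $xm\in I(G-u)^k$.
\end{itemize}

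For the intersection, I first note that $(I(G)^k:x)\subset I(G)^{k-1}$. Indeed, if $xm$ is divisible by $e_1\cdots e_k$, then deleting the at most one edge containing $x$ leaves $k-1$ edges dividing $m$. Hence $(I(G-u)^k:x)\subset I(G)^{k-1}$, which gives $u(I(G-u)^k:x)\subset uI(G)^{k-1}$. Trivially $u(I(G-u)^k:x)\subset(I(G-u)^k:x)$, so $\supseteq$ holds. Conversely, let $m$ be a monomial in the intersection. Since $m\in uI(G)^{k-1}$, we have $u\mid m$. The ideal $(I(G-u)^k:x)$ is generated by monomials not involving $u$, so $m/u$ still lies in it, and therefore $m\in u(I(G-u)^k:x)$.

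For the splitting, set $J=(I(G-u)^k:x)$ and $L=uI(G)^{k-1}$, and assume $J\neq0$. First I check $\mathcal G(J+L)=\mathcal G(J)\sqcup\mathcal G(L)$. No generator of $J$ is divisible by $u$, so none lies in $L$. Now let $ug$ with $g\in\mathcal G(I(G)^{k-1})$ be a generator of $L$. If $ug\in J$, then $g\in J$ by the $u$-freeness of $J$. That would give $xg\in I(G-u)^k$, which is impossible since $\deg xg=2k-1<2k$. By the intersection formula, $\mathcal G(J\cap L)=\{uw: w\in\mathcal G(J)\}$.

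I then define $\phi(uw)=w$ and $\psi(uw)=ug_w$, where $g_w\in\mathcal G(I(G)^{k-1})$ divides $w$ (this exists because $w\in J\subset I(G)^{k-1}$). Axiom (1) holds since $u\nmid w$, so $\lcm(w,ug_w)=uw$. For axiom (2), $\lcm(\phi(W))=\lcm(W)/u$ is automatically a strict divisor. The remaining point is that $\lcm(\psi(W))=u\cdot\lcm_{w\in W}g_w$ must strictly divide $u\cdot\lcm_{w\in W} w$.

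I expect this strictness to be the main obstacle, because an arbitrary choice of $g_w$ might have the same lcm as the $w$'s. I would fix $g_w$ canonically as follows. Write $xw=e_1\cdots e_k h$ with $e_i$ edge monomials of $G-u$.
\begin{itemize}
\item If $x$ occurs in some $e_s=xy$, take $g_w=\prod_{i\neq s}e_i$. Then $y\in N(x)$ divides $w/g_w$.
\item Otherwise, take $g_w$ to be the product of $k-1$ of the edges, leaving out one edge.
\end{itemize}
The aim is then a variable-degree count: for some variable $z$, compare its exponent in $\lcm(W)$ with its exponent in $\lcm_{w\in W} g_w$, and show the latter is strictly smaller. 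The natural candidates for $z$ are $x$ itself or a variable from $N(x)$, and here one would use that $N(x)$ is independent and that the neighbourhoods $N(v)$, $v\in N(x)$, contain $N(u)$. If a uniform choice fails, the fallback is to fix a total order on monomials, take $g_w$ to be the smallest admissible divisor, and compare exponents at the largest variable where $\lcm(W)$ attains its maximum.
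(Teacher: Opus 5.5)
Your proofs of the two ideal identities and of $\mathcal{G}(J+L)=\mathcal{G}(J)\sqcup\mathcal{G}(L)$ are correct and essentially as in the paper. The shape of your splitting map $uw\mapsto(w,ug_w)$ also matches the paper's. The gap is axiom (2) for $\psi$, which is the real content of the ``moreover'' part. You never fix $g_w$ so that $\lcm(\psi(W))\neq\lcm(W)$. The rule you sketch (pick some edge $xy$ in a decomposition of $xw$, or otherwise drop some edge) is not well defined, and it fails for admissible choices.

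For example, let $k=2$ and let $G$ have edges $xu,xy_1,xy_2,y_1a,y_2a,y_2b$. Then $x$ is weakly simplicial with $N(u)=\{x\}\subset N(y_1)=\{x,a\}\subset N(y_2)=\{x,a,b\}$. The monomials $w=y_1y_2b$ and $w'=y_1y_2a$ lie in $\mathcal{G}(J)$, since degree $3$ is the least possible. Here $g_w=y_2b$ is forced. The decomposition $xw'=(xy_2)(y_1a)$, which your rule allows, gives $g_{w'}=y_1a$. For $W=\{uw,uw'\}$ you then get $\lcm(\psi(W))=uy_1y_2ab=\lcm(W)$. Dropping an arbitrary edge in your second case fails the same way: take three disjoint edges $e_1,e_2,e_3$ far from $x$, with $e_1e_2\mapsto e_1$, $e_2e_3\mapsto e_2$, $e_1e_3\mapsto e_3$.

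The missing idea is to decide which variable is removed by a fixed order on the variables, not by a decomposition.
\begin{itemize}
\item List $N_{G-u}(x)=\{v_1,\dots,v_t\}$ first, with $N_{G-u}(v_1)\subset\cdots\subset N_{G-u}(v_t)$. This is where the full weakly simplicial hypothesis is used, not only $N(u)\subset N(v)$. Then list the remaining variables, and put $x$ last.
\item Let $z(w)$ be the first variable dividing $w$, and choose $g_w\in\mathcal{G}(I(G)^{k-1})$ dividing $w/z(w)$.
\item The nontrivial claim is $w/z(w)\in I(G)^{k-1}$. If $z(w)=v_i$, then $xw\in I(G-\{u,v_1,\dots,v_{i-1}\})^k$. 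Lemma~\ref{colon-lemma}(1), applied to this smaller graph with the vertex $v_i$, gives $w/v_i\in I(G)^{k-1}$. If no $v_i$ divides $w$, minimality forces $w$ to be a product of $k$ edges.
\end{itemize}
This claim genuinely depends on the order. With $k=2$, edges $xu,xy,pq$ and $w=ypq$, removing $p$ leaves $yq\notin I(G)$.

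Strictness then follows at once. Let $z$ be the first variable dividing some $w$ with $uw\in W$. Each $g_w$ has $z$-degree at most $\deg_z w-1$ (when $z\mid w$, since then $z(w)=z$) or $0$ (when $z\nmid w$). Hence $\deg_z\lcm(\psi(W))<\deg_z\lcm(W)$. Your fallback via a monomial order does not supply the claim $w/z(w)\in I(G)^{k-1}$, which is exactly what the argument needs.
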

\begin{proof}
Fix $k\geq2$. First, we prove that $(I(G)^{k}:x)=(I(G-u)^{k}:x)+uI(G)^{k-1}$. Notice that we know that $(I(G-u)^{k}:x)\subset(I(G)^{k}:x)$. Also, since $xuI(G)^{k-1}\subset I(G)^{k}$, we get $uI(G)^{k-1}\subset(I(G)^{k}:x)$. Fix a monomial $m\in(I(G)^{k}:x)$. If $u$ divides $m$, then we have $xu(m/u)=xm\in I(G)^{k}$, which implies that $m/u\in I(G)^{k-1}$ from Lemma \ref{colon-lemma} (1). On the other hand, if $u$ does not divide $m$, then, since $xm\in K[V(G)\setminus\{u\}]$, we have $xm\in I(G)^{k}\cap K[V(G)\setminus\{u\}]=I(G-u)^{k}$, which implies that $m\in(I(G-u)^{k}:x)$. 
Hence, we get $(I(G)^{k}:x)=(I(G-u)^{k}:x)+uI(G)^{k-1}$. Moreover, since $(I(G-u)^{k}:x)\subset I(G)^{k-1}$, we obtain that 
\begin{align*}
(I(G-u)^{k}:x)\cap uI(G)^{k-1}&=(\lcm(a,ub)\,\,:a\in\mathcal{G}((I(G-u)^{k}:x)),b\in\mathcal{G}(I(G)^{k-1})) \\
&=u(\lcm(a,b)\,\,:a\in\mathcal{G}((I(G-u)^{k}:x)),b\in\mathcal{G}(I(G)^{k-1})) \\
&=u((I(G-u)^{k}:x)\cap I(G)^{k-1}) \\
&=u(I(G-u)^{k}:x).
\end{align*}
From now on, we prove that $(I(G)^{k}:x)=(I(G-u)^{k}:x)+uI(G)^{k-1}$ is an Eliahou--Kervaire splitting. Assume that $(I(G-u)^{k}:x)\neq0$. For any $m\in\mathcal{G}(I(G)^{k}:x)$, $xm\in I(G)^{k}$ implies that $\deg m\geq 2k-1$. On the other hand, every element of $\mathcal{G}(I(G)^{k-1})$ has degree $2k-2$. Let $m\in\mathcal{G}(I(G-u)^{k}:x)$ and $m^{\prime}\in\mathcal{G}(I(G)^{k-1})$. Since $xm\in I(G-u)^{k}$, we have $\deg m\geq 2k-1$, whereas $\deg m^{\prime}=2k-2$. Moreover, $u$ does not divide $m$. Hence, $um^{\prime}$ does not divide $m$. If $m$ divides $um^{\prime}$, then $m$ divides $m^{\prime}$, a contradiction. Hence, we get 
$$\mathcal{G}(I(G)^{k}:x)=\mathcal{G}(I(G-u)^{k}:x)\sqcup u\mathcal{G}(I(G)^{k-1}).$$ 
It remains to construct a splitting function. Write $N_{G-u}(x)=\{w_{1},\ldots, w_{t}\}$ and assume $N_{G-u}(w_{1})\subset\cdots\subset N_{G-u}(w_{t})$. Fix a total order on the variables of $K[V(G)\setminus\{u\}]$ in which $w_{1},\ldots, w_{t}$ occur first, followed by the remaining variables other than $x$, and $x$ occurs last. For every $m\in\mathcal{G}(I(G-u)^{k}:x)$, let $z(m)$ denote the first variable in this order which divides $m$. We claim that $m/z(m)\in I(G)^{k-1}$ for all $m\in\mathcal{G}(I(G-u)^{k}:x)$. Suppose that $z(m)=w_{i}$ for some $i$. Since $w_{1},\ldots, w_{i-1}$ do not divide $m$, we obtain $xm\in I(G-\{u,w_{1},\ldots, w_{i-1}\})^{k}$. Hence, from Lemma \ref{colon-lemma} (1), we see that $$m/w_{i}\in(I(G-\{u,w_{1},\ldots, w_{i-1}\})^{k}:xw_{i})\subset I(G-\{u,w_{1},\ldots, w_{i-1}\})^{k-1}\subset I(G)^{k-1}.$$

Suppose that no variables in $N_{G-u}(x)$ divides $m$. Take edges $\{a_{1},b_{1}\},\ldots,\{a_{k},b_{k}\}$ of a graph $G-u$ such that $\prod_{i}a_{i}b_{i}$ divides $xm$. Then we see that $\prod_{i}a_{i}b_{i}$ divides $m$. Since $\prod_{i}a_{i}b_{i}\in I(G-u)^{k}\subset(I(G-u)^{k}:x)$ and $m\in\mathcal{G}(I(G-u)^{k}:x)$, we have $m=\prod_{i}a_{i}b_{i}$. Take $s\in[k]$ such that $z(m)$ divides $a_{s}b_{s}$. Then we see that 
$$\frac{m}{z(m)}=\frac{a_{s}b_{s}}{z(m)}\prod_{i\neq s}a_{i}b_{i}\in I(G)^{k-1}.$$
For each $m\in\mathcal{G}(I(G-u)^{k}:x)$, we can take $m^{\prime}\in\mathcal{G}(I(G)^{k-1})$ such that $m^{\prime}$ divides $m/z(m)$. Define a map 
$$\mathcal{G}(u(I(G-u)^{k}:x))\rightarrow\mathcal{G}(I(G-u)^{k}:x)\times\mathcal{G}(uI(G)^{k-1}), \quad um\rightarrow(m,um^{\prime}).$$
Finally, we verify that this map satisfies the conditions given in an Eliahou--Kervaire splitting. 
Since $m^{\prime}$ divides $m$ and $u$ does not divide $m$, we have $\lcm(m,um^{\prime})=um$. Let $\emptyset\neq W\subset\mathcal{G}(u(I(G-u)^{k}:x))$. Since, $\phi(um)=m$ for every $um\in W$, we have $\lcm(W)=u\lcm(\phi(W))$, which implies that $\lcm(\phi(W))$ strictly divides $\lcm(W)$. Let $z$ be the first variable in the fixed order dividing at least one $m$ with $um\in W$. If $z$ divides $m$, then we have $z(m)=z$. Since $m^{\prime}$ divides $m/z(m)$, we have $\deg_{z}(m^{\prime})\leq\deg_{z}(m)-1$. If $z$ does not divide $m$, then we see that $z$ does not divide $m^{\prime}$. Thus, we obtain $\deg_{z}\lcm(\psi(W))<\deg_{z}\lcm(W)$. Moreover, we see that $um^{\prime}$ divides $um$ for each $um\in W$, which implies that $\lcm(\psi(W))$ divides $\lcm(W)$ and $\lcm(\psi(W))\neq\lcm(W)$, which completes the proof. 
\end{proof}

For symbolic powers, deleting a leaf gives the following lower bound for the depth. 

\begin{lemma}\label{leaf-depth}
Let $G$ be a graph and let $y$ be a leaf with $N(y)=\{x\}$. Then, for any $k\geq2$, we have 
\begin{align*}
\depth S/I(G)^{(k)}
\geq \min\left\{
\begin{aligned}
&\depth K[V(G)\setminus\{y\}]/I(G-y)^{(k)}+1,\\
&\depth S/I(G)^{(k-1)},\\
&\depth K[V(G)\setminus\{y\}]/(I(G-y)^{(k)}:x)
\end{aligned}
\right\}.
\end{align*}
\end{lemma}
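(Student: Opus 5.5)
The plan is to compare $I:=I(G)^{(k)}$ with the extended ideal $L:=I(G-y)^{(k)}S$ and then use short exact sequences together with the colon identity of Lemma \ref{colon-lemma}~(2). Since $N(y)=\{x\}$ is a single vertex, it is trivially independent and totally ordered by inclusion of neighbourhoods. So $y$ is weakly simplicial with $u=x$, and Lemma \ref{colon-lemma}~(2) gives $(I:xy)=I(G)^{(k-1)}$. This is where the middle term of the minimum will come from.

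\emph{Step 1: $L\subset I$.} I will check this prime by prime. Let $C$ be a minimal vertex cover of $G$. If $x\in C$, then $y\notin C$, and $C$ is a minimal vertex cover of $G-y$. If $y\in C$, then $x\notin C$, and $C\setminus\{y\}$ is a vertex cover of $G-y$, so it contains a minimal one $C'$. Hence $P_{C'}^k\subset P_C^k$ in both cases, and $L\subset I$. Since $y$ is a free variable for $L$, we get $\depth S/L=\depth K[V(G)\setminus\{y\}]/I(G-y)^{(k)}+1$. The exact sequence $0\to I/L\to S/L\to S/I\to 0$ and the depth lemma then give
$$\depth S/I\geq\min\{\depth S/L,\ \depth (I/L)-1\}.$$
This produces the first term, so it remains to bound $\depth(I/L)-1$.

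\emph{Step 2: describe $I/L$.} Every monomial of $I$ not divisible by $y$ lies in $L$. Indeed, for $C\ni y$ the degree of such a monomial on $C$ equals its degree on $C\setminus\{y\}$, and every minimal cover of $G-y$ arises from Step 1. Hence $I=L+y(I:y)$, and multiplication by $y$ gives an isomorphism $I/L\cong (I:y)/(L:y)$ up to shift, with $(L:y)=L$. Now $(I:y)$ is the intersection of the $P_C^k$ with $x\in C$ and the $P_C^{k-1}$ with $y\in C$. I will then split $(I:y)/L$ with respect to $x$, using the sequence
$$0\to \big((I:y)/L\big)\cap x(\cdot)\to (I:y)/L\to (I:y)/\big(L+x(I:xy)\big)\to 0.$$
Equivalently, I apply the depth lemma to $0\to S/(J:x)\to S/J\to S/(J,x)\to0$ for suitable $J$. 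One piece involves $(I:xy)=I(G)^{(k-1)}$ and yields $\depth S/I(G)^{(k-1)}$. The other involves $(L:x)$, whose depth equals $\depth K[V(G)\setminus\{y\}]/(I(G-y)^{(k)}:x)$ plus a shift, again because $y$ is free. Taken over all three quotients, the resulting inequality should be exactly the stated minimum.

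\emph{Main obstacle.} I expect the hard part to be the bookkeeping of the $-1$ and $+1$ shifts in Step 2. Concretely, I must check that $\depth(I/L)-1$ is bounded below by $\min\{\depth S/I(G)^{(k-1)},\ \depth K[V(G)\setminus\{y\}]/(I(G-y)^{(k)}:x)\}$, rather than by one less. For this I would identify $I/L$ directly as a direct sum over powers of $y$: writing $I=\bigoplus_j y^jI_j$ with $I_j\subset K[V(G)\setminus\{y\}]$, one has $I_0=I(G-y)^{(k)}$ and the $I_j$ increasing. I would then show that $I_j=(I_0:x^{j})$-type pieces stabilize at $j=1$ modulo $x(I:xy)$. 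The aim is to obtain an honest exact sequence of cyclic modules, with $S/I(G)^{(k-1)}$ and $S/(L:x)$ at the ends, so that the depth lemma can be applied without losing a degree.
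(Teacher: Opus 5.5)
Your Step 1 and the reduction $I/L\cong (I:y)/L$ (up to shift) are correct. There is one harmless slip in Step 1: if $x\in C$, then $C$ need not be a \emph{minimal} vertex cover of $G-y$. Take the path with edges $\{y,x\},\{x,w\},\{w,z\}$ and $C=\{x,w\}$. However, $C$ is a vertex cover of $G-y$, so it contains a minimal one, and that is all you use.

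The genuine gap is that the estimate you need,
$$\depth(I/L)-1\geq\min\{\depth S/I(G)^{(k-1)},\ \depth K[V(G)\setminus\{y\}]/(I(G-y)^{(k)}:x)\},$$
is never proved. You flag it as the main obstacle, and the plan around it is incomplete. The first term $((I:y)/L)\cap x(\cdot)$ of your sequence is left unspecified. The third quotient $(I:y)/(L+x(I:xy))$ is treated as a further term that would enter the minimum. Also, the natural instance $J=(I:y)$ of your sequence $0\to S/(J:x)\to S/J\to S/(J,x)\to 0$ does not lead to the statement. Since $(J,x)=(L,x)$, it produces a term involving $I(G-\{x,y\})^{(k)}$, not the colon ideal $(I(G-y)^{(k)}:x)$.

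The missing idea is the decomposition $(I:y)=L+x\,I(G)^{(k-1)}$. The part with $x\mid m$ follows from $(I:xy)=I(G)^{(k-1)}$. The real content is that every monomial $m\in(I:y)$ with $x\nmid m$ already lies in $L$. To see this, let $D$ be a minimal vertex cover of $G-y$.
\begin{itemize}
\item If $x\in D$, then $D$ is a minimal vertex cover of $G$ avoiding $y$, so $m\in(P_D^k:y)=P_D^k$.
\item If $x\notin D$, then $D\cup\{x\}$ contains a minimal vertex cover $C$ of $G$. This $C$ must contain $x$ and hence not $y$, so $k\leq\deg_C m\leq\deg_{D\cup\{x\}}m=\deg_D m$.
\end{itemize}
Thus your third quotient is zero.

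Next, $(L:x)\subset(I:xy)=I(G)^{(k-1)}$, so $L\cap xI(G)^{(k-1)}=x(L:x)$. Hence, up to shifts,
$$I/L\cong (I:y)/L\cong xI(G)^{(k-1)}/x(L:x)\cong I(G)^{(k-1)}/(L:x).$$
Apply the depth lemma to $0\to I(G)^{(k-1)}/(L:x)\to S/(L:x)\to S/I(G)^{(k-1)}\to 0$. It gives $\depth(I/L)\geq\min\{\depth S/(L:x),\ \depth S/I(G)^{(k-1)}+1\}$. Since $y$ is free, $\depth S/(L:x)=\depth K[V(G)\setminus\{y\}]/(I(G-y)^{(k)}:x)+1$, so no degree is lost.

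Completed this way, your argument is a filtered version of the paper's proof. The paper proves $I(G)^{(k)}=I(G-y)^{(k)}+xyI(G)^{(k-1)}$ and $I(G-y)^{(k)}\cap xyI(G)^{(k-1)}=xy(I(G-y)^{(k)}:x)$. It then applies the depth lemma once to
$$0\to S/xy(I(G-y)^{(k)}:x)\to S/I(G-y)^{(k)}\oplus S/xyI(G)^{(k-1)}\to S/I(G)^{(k)}\to 0.$$
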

\begin{proof}
Fix a minimal vertex cover $C$ of $G$. Since $y$ is a leaf, we know that $|C\cap\{x,y\}|=1$ and hence, we obtain $(I(G)^{(k)}:xy)=I(G)^{(k-1)}$. Hence, since $I(G-y)^{(k)}\subset I(G)^{(k)}$, we get $I(G-y)^{(k)}+xyI(G)^{(k-1)}\subset I(G)^{(k)}$. Also, fix a monomial $m\in I(G)^{(k)}$. If $y$ does not divide $m$, then we have $m\in I(G-y)^{(k)}$. Moreover, if $xy$ divides $m$, then we have $m\in xyI(G)^{(k-1)}$. Hence, suppose that $y$ divides $m$ and $x$ does not divide $m$. For any minimal vertex cover $D$ of $G-y$, since $D\cup\{x\}$ is a vertex cover of $G$, there exists a minimal vertex cover $C$ of $G$ such that $C\subset D\cup\{x\}$.  Thus, we obtain 
$$\sum_{v\in D}\deg_{v}(m)=\sum_{v\in D\cup\{x\}}\deg_{v}(m)\geq\sum_{v\in C}\deg_{v}(m)\geq k,$$
which implies that $m\in I(G-y)^{(k)}$. As a consequence, we get $$I(G)^{(k)}=I(G-y)^{(k)}+xyI(G)^{(k-1)}.$$
Moreover, since 
\begin{align*}
I(G-y)^{(k)}\cap xyI(G)^{(k-1)}&=(\lcm(a,xyb)\,\,: a\in\mathcal{G}(I(G-y)^{(k)}),b\in\mathcal{G}(I(G)^{(k-1)})) \\
&=xy(\lcm(a/\gcd(a,x), b)\,\,: a\in\mathcal{G}(I(G-y)^{(k)}),b\in\mathcal{G}(I(G)^{(k-1)})) \\
&=xy((I(G-y)^{(k)}:x)\cap I(G)^{(k-1)}) \\
&=xy(I(G-y)^{(k)}:x),
\end{align*}
we have the following short exact sequence:
$$0\rightarrow S/xy(I(G-y)^{(k)}:x)\rightarrow S/I(G-y)^{(k)}\oplus S/xyI(G)^{(k-1)}\rightarrow S/I(G)^{(k)}\rightarrow0.$$
Therefore, from the depth lemma, we obtain the desired inequality, as required. 
\end{proof}

We next compute the depth of $S/(I(G)^{k}:x)$ and give a lower bound for the corresponding symbolic quotient. 

\begin{lemma}\label{depth ineq}
Let $G$ be a graph and let $x$ be a weakly simplicial vertex. Let $u\in N(x)$ such that $N(u)\subset N(v)$ for all $v\in N(x)$. Then, for every $k\geq2$, we have 
$$\depth S/(I(G)^{k}:x)=\min\{\depth S/I(G)^{k-1},\depth K[V(G)\setminus\{u\}]/(I(G-u)^{k}:x)\}$$ and 
$$\depth S/(I(G)^{(k)}:x)\geq\min\{\depth S/I(G)^{(k-1)},\depth K[V(G)\setminus\{u\}]/(I(G-u)^{(k)}:x)\}$$
\end{lemma}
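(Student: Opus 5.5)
For the ordinary powers, I will use the decomposition from Proposition \ref{EK-splitting}. Write $J=(I(G-u)^{k}:x)S$ and $L=uI(G)^{k-1}$, so that $(I(G)^{k}:x)=J+L$ and $J\cap L=uJ$. If $J=0$, then $(I(G)^{k}:x)=uI(G)^{k-1}$, and $\depth S/uI(G)^{k-1}=\depth S/I(G)^{k-1}$ because $uI\cong I(-1)$. This case has to be matched against the convention for the empty second term, which I read as $+\infty$. Otherwise the splitting is Eliahou--Kervaire, and Proposition \ref{pd formula} gives
$$\pd (I(G)^{k}:x)=\max\{\pd J,\ \pd L,\ \pd(J\cap L)+1\}.$$
Since multiplication by a variable is a graded isomorphism, $\pd L=\pd I(G)^{k-1}$ and $\pd(J\cap L)=\pd uJ=\pd J$. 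So $\pd(I(G)^{k}:x)=\max\{\pd I(G)^{k-1},\pd J+1\}$. Here $\pd J$ over $S$ equals $\pd$ of $(I(G-u)^{k}:x)$ over $R=K[V(G)\setminus\{u\}]$, because the ideal is extended from $R$ ($u$ does not occur in its generators). The Auslander--Buchsbaum formula, with $\pd S/I=\pd I+1$ and $n=|V(G)|$, then converts this into
$$\depth S/(I(G)^{k}:x)=\min\{\depth S/I(G)^{k-1},\ (n-1)-\pd_R (I(G-u)^{k}:x)-1\}.$$
The second term is exactly $\depth R/(I(G-u)^{k}:x)$, which gives the equality.

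For the symbolic powers there is no splitting function available, so I will aim for an exact sequence instead. First I will show
$$(I(G)^{(k)}:x)=(I(G-u)^{(k)}:x)S+uI(G)^{(k-1)},\qquad (I(G-u)^{(k)}:x)S\cap uI(G)^{(k-1)}=u(I(G-u)^{(k)}:x)S.$$
The inclusion $\supseteq$ in the first identity uses two facts. Lemma \ref{colon-lemma}(2) gives $xuI(G)^{(k-1)}\subset I(G)^{(k)}$. For a minimal vertex cover $C$ of $G$, the set $C\setminus\{u\}$ contains a vertex cover of $G-u$, which gives $I(G-u)^{(k)}\subset I(G)^{(k)}$.

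For $\subseteq$, take a monomial $m$ with $xm\in I(G)^{(k)}$. If $u\mid m$, then $xu(m/u)\in I(G)^{(k)}$, so $m/u\in I(G)^{(k-1)}$ by Lemma \ref{colon-lemma}(2). If $u\nmid m$, I must show $xm\in I(G-u)^{(k)}$. Let $D$ be a minimal vertex cover of $G-u$. Then $D\cup\{u\}$ is a vertex cover of $G$, so it contains a minimal vertex cover $C$ of $G$, and $\sum_{C}\deg\le \sum_{D}\deg$ because $u$ does not divide $xm$. Hence $xm\in P_D^{k}$, the same argument as in Lemma \ref{leaf-depth}.

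For the intersection, it suffices to check $(I(G-u)^{(k)}:x)\subset I(G)^{(k-1)}$. Suppose $xm\in I(G-u)^{(k)}\subset I(G)^{(k)}$. Then $xum\in I(G)^{(k)}$, so $m\in I(G)^{(k-1)}$ by Lemma \ref{colon-lemma}(2). The intersection computation then proceeds as in Proposition \ref{EK-splitting}.

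This yields the short exact sequence
$$0\to S/uJ'\to S/J'\oplus S/uI(G)^{(k-1)}\to S/(I(G)^{(k)}:x)\to 0,$$
where $J'=(I(G-u)^{(k)}:x)S$. The depth lemma gives
$$\depth S/(I(G)^{(k)}:x)\ge \min\{\depth S/J',\ \depth S/uI(G)^{(k-1)},\ \depth S/uJ'-1\}.$$
For the individual terms: $\depth S/uI=\depth S/I$ when $I\ne S$, since $0\to S/I(-1)\to S/uI\to S/u\to 0$ and $\depth S/u=n-1\ge \depth S/I$. Also $\depth S/J'=\depth R/(I(G-u)^{(k)}:x)+1$, since $u$ is a free variable. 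With both of these, all three terms are bounded below by the stated minimum; in particular $\depth S/uJ'-1=\depth R/(I(G-u)^{(k)}:x)$.

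The main obstacle is this bookkeeping: the shift by the extra variable $u$, and the degenerate cases where $J$ or $J'$ is $0$ or the unit ideal (for instance when $x$ becomes isolated in $G-u$). These need to be checked separately against the conventions for $\depth$ of the zero module.
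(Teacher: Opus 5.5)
Your proof is correct. For the ordinary powers it is the paper's own argument: Proposition~\ref{EK-splitting}, Proposition~\ref{pd formula}, and Auslander--Buchsbaum.

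For the symbolic inequality you take a different route. You prove the symbolic analogue of the decomposition, $(I(G)^{(k)}:x)=J'+uI(G)^{(k-1)}$ with $J'\cap uI(G)^{(k-1)}=uJ'$. You then apply the depth lemma to the Mayer--Vietoris sequence, which costs you three terms and the shift bookkeeping for $uJ'$ and the free variable $u$.

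The paper instead uses multiplication by $u$:
\[
0\rightarrow S/I(G)^{(k-1)}(-1)\xrightarrow{\cdot u} S/(I(G)^{(k)}:x)\rightarrow K[V(G)\setminus\{u\}]/(I(G-u)^{(k)}:x)\rightarrow 0.
\]
The kernel is identified via $((I(G)^{(k)}:x):u)=(I(G)^{(k)}:xu)=I(G)^{(k-1)}$, which is Lemma~\ref{colon-lemma}(2). The cokernel is identified via $I(G)^{(k)}\cap K[V(G)\setminus\{u\}]=I(G-u)^{(k)}$, which is exactly your case $u\nmid m$. This gives the two-term bound in one step, with no shifts to track. What your version buys is the explicit parallel with the ordinary case: the same decomposition persists for symbolic powers, and only the splitting function, hence equality, is lost.

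The degenerate cases you flag are harmless and need no special convention. Neither $J$ nor $J'$ is ever the unit ideal, because $x\notin I(G-u)^{k}$ and $x\notin I(G-u)^{(k)}$ for $k\geq 2$ by degree. If $J=0$, the second term is $\depth K[V(G)\setminus\{u\}]=|V(G)|-1$, the depth of a polynomial ring rather than of a zero module. This is at least $\depth S/I(G)^{k-1}$ because $I(G)^{k-1}\neq 0$. That is how the paper handles it, and there is no need to read the term as $+\infty$.
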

\begin{proof}
Fix $k\geq2$. If $(I(G-u)^{k}:x)=0$, then from Proposition \ref{EK-splitting}, we have $(I(G)^{k}:x)=uI(G)^{k-1}$, which implies that $\depth S/(I(G)^{k}:x)=\depth S/I(G)^{k-1}\leq |V(G)|-1$. If $(I(G-u)^{k}:x)\neq0$, then the formula follows from Propositions \ref{pd formula} and \ref{EK-splitting} and the Auslander--Buchsbaum formula. Hence, it is enough to consider symbolic powers. From Lemma \ref{colon-lemma} (2) and $I(G)^{(k)}\cap K[V(G)\setminus\{u\}]=I(G-u)^{(k)}$, we have the short exact sequence 
$$0\rightarrow S/I(G)^{(k-1)}(-1)\rightarrow S/(I(G)^{(k)}:x)\rightarrow K[V(G)\setminus\{u\}]/(I(G-u)^{(k)}:x)\rightarrow0.$$
The desired inequality follows from the depth lemma, as required. 
\end{proof}

We now combine these results to give a sufficient condition for the depth comparison. The condition uses a leaf with a weakly simplicial neighbor and reduces the comparison to smaller induced subgraphs.

\begin{thm}\label{comparison}
Let $G$ be a graph and let $y$ be a leaf of $G$ with $N(y)=\{x\}$. Suppose that $x$ is weakly simplicial, $N(x)\setminus\{y\}=\{u_{1},\ldots, u_{r}\}$ with $r\geq1$, and $N(u_{1})\subset\cdots\subset N(u_{r})$. Suppose that the following conditions hold:
\begin{enumerate}
\item $\depth S_{i}/I(G-\{y,u_{1},\ldots, u_{i}\})^{(k)}\geq\depth S_{i}/I(G-\{y,u_{1},\ldots, u_{i}\})^{k}$ for all $0\leq i<r$ and $k\geq1$,
where $S_{i}=K[V(G)\setminus\{y,u_{1},\ldots, u_{i}\}]$ be a polynomial ring over $K$, with $S_{0}=K[V(G)\setminus\{y\}]$. 
\item $\depth S/I(G-N[x])^{(k)}\geq\depth S/I(G-N[x])^{k}$ for all $k\geq1$. 
\end{enumerate}
Then we have $\depth S/I(G)^{(k)}\geq\depth S/I(G)^{k}$ for all $k\geq1$. 
\end{thm}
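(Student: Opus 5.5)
The plan is to prove the statement by induction on $k$, comparing the symbolic and ordinary sides term by term through parallel decompositions at the leaf $y$ and then at the weakly simplicial vertex $x$. For $k=1$ the two sides coincide, so assume $k\geq2$ and that the inequality holds for $k-1$, both for $G$ and for the subgraphs appearing in (1) and (2). Lemma \ref{leaf-depth} bounds the symbolic side:
$$\depth S/I(G)^{(k)}\geq\min\{A^{(k)},B^{(k)},C^{(k)}\},$$
where
$$A^{(k)}=\depth S_0/I(G-y)^{(k)}+1,\quad B^{(k)}=\depth S/I(G)^{(k-1)},\quad C^{(k)}=\depth S_0/(I(G-y)^{(k)}:x).$$
Let $A^{k},B^{k},C^{k}$ be the analogous quantities with ordinary powers. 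It then suffices to prove two things: first $\depth S/I(G)^{k}\leq\min\{A^k,B^k,C^k\}$, and second $A^{(k)}\geq A^k$, $B^{(k)}\geq B^k$, $C^{(k)}\geq C^k$.

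For the upper bound on the ordinary side I will mimic the proof of Lemma \ref{leaf-depth}. Every generator of $I(G)^k$ divisible by $y$ is divisible by $xy$, and $(I(G)^k:xy)=I(G)^{k-1}$ by Lemma \ref{colon-lemma}(1), since $y$ is weakly simplicial. This gives
$$I(G)^k=I(G-y)^k+xyI(G)^{k-1},\qquad I(G-y)^k\cap xyI(G)^{k-1}=xy(I(G-y)^k:x).$$
I then want this decomposition to be an Eliahou--Kervaire (or at least Betti) splitting. I will build a splitting function as in Proposition \ref{EK-splitting}: send $xym$ with $m\in\mathcal{G}(I(G-y)^k:x)$ to the pair $(xm',\,xym)$, where $xm'$ is a generator of $I(G-y)^k$ dividing $xm$, choosing $m'$ via a variable order with $x$ last. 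With that, Proposition \ref{pd formula} and Auslander--Buchsbaum give the equality $\depth S/I(G)^k=\min\{A^k,B^k,C^k\}$. If the Eliahou--Kervaire verification resists, a fallback is to prove the three inequalities separately. For instance, $\depth S/I(G)^k\leq\depth S/I(G)^{k-1}$ follows from \cite[Corollary 1.3]{r2010} applied to the colon by $xy$. The other two need a comparable colon or polarization argument.

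The comparisons themselves go as follows. $A^{(k)}\geq A^k$ is hypothesis (1) with $i=0$, and $B^{(k)}\geq B^k$ is the induction hypothesis. For $C$, I work in $G_0=G-y$, where $x$ is weakly simplicial with $N_{G_0}(x)=\{u_1,\ldots,u_r\}$ and $N(u_1)$ minimal. Applying Lemma \ref{depth ineq} with $u=u_1$ gives an exact formula for the ordinary colon and a lower bound for the symbolic colon, involving $\depth S_0/I(G_0)^{(k-1)}$ (respectively $^{k-1}$) and the colons for $G_1=G-\{y,u_1\}$. In $G_i=G-\{y,u_1,\ldots,u_i\}$ the vertex $x$ stays weakly simplicial with nested neighborhoods $N(u_{i+1})\subset\cdots\subset N(u_r)$. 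This holds because $N(x)$ is independent, so deleting some $u_j$'s does not change the other $N(u_\ell)$.

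Iterating $r$ times produces terms $\depth S_i/I(G_i)^{(k-1)}$, each bounded below by its ordinary counterpart via hypothesis (1) (trivial when $k-1=0$). It also produces a final term $\depth K[V(G_r)]/(I(G_r)^{(k)}:x)$. In $G_r$ the vertex $x$ is isolated, so this colon is just $I(G-N[x])^{(k)}$ extended by the free variable $x$. Hypothesis (2) then handles the final term, with matching ordinary and symbolic extensions. Because the ordinary recursion consists of equalities and the symbolic one of "$\geq\min$", the minima compare and give $C^{(k)}\geq C^k$.

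I expect the main obstacle to be the ordinary leaf splitting, namely proving the upper bound $\depth S/I(G)^k\leq C^k$ (and $\leq A^k$), which requires a genuine splitting rather than just the depth lemma. A smaller bookkeeping issue is the degenerate case where a colon $(I(G_i)^k:x)$ is zero or the unit ideal, handled as in the proof of Lemma \ref{depth ineq}.
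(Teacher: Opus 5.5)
Your overall architecture is right and matches the paper: Lemma \ref{leaf-depth} on the symbolic side; $A^{(k)}\ge A^k$ and $B^{(k)}\ge B^k$ from hypothesis (1) and induction; and $C^{(k)}\ge C^k$ by iterating Lemma \ref{depth ineq} along $G_i=G-\{y,u_1,\dots,u_i\}$ and closing with hypothesis (2). The gap is the step you flag yourself. The ordinary-side bound $\depth S/I(G)^k\le\min\{A^k,C^k\}$ is never established, because the splitting function you write down does not work.

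\begin{itemize}
\item In an Eliahou--Kervaire splitting the second coordinate must lie in $\mathcal{G}(xyI(G)^{k-1})$, which is concentrated in degree $2k$. But $xym$ has degree at least $2k+1$. Moreover, $\psi(w)=w$ would give $\lcm(\psi(W))=\lcm(W)$, violating condition (2) outright.
\item The first coordinate ``$xm'$'' need not exist. If $m$ is a product of $k$ edges of $G-N[x]$, then $m\in\mathcal{G}(I(G-y)^k:x)$. However, no generator of $I(G-y)^k$ divisible by $x$ divides $xm$, since it would contain an edge $xu_j$ and $u_j\nmid m$.
\item Placing only $x$ last in the order gives no control. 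For $k=2$, take $xm=(xu_1)(ab)$ with $a$ ordered before $u_1$ and $u_1b\notin E(G)$. Then $m/z(m)=u_1b\notin I(G)$.
\end{itemize}

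Your fallback (``a comparable colon or polarization argument'') is left unspecified, so as written the proof is incomplete.

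The missing idea is that no new splitting is needed, contrary to your remark. For $A^k$: for any monomial ideal $I$ with $y\notin I$, use the exact sequence $0\to S/(I:y)(-1)\to S/I\to S/(I,y)\to0$. Combined with $\depth S/(I:y)\ge\depth S/I$ \cite[Corollary 1.3]{r2010}, it gives $\depth S/I\le\depth S/(I,y)+1$. Since $S/(I(G)^k,y)\cong S_0/I(G-y)^k$, this yields $\depth S/I(G)^k\le A^k$. For $C^k$: $\depth S/I(G)^k\le\depth S/(I(G)^k:x)$ by \cite[Corollary 1.3]{r2010}. Since $N(y)=\{x\}\subset N(v)$ for every $v\in N(x)$, Lemma \ref{depth ineq} applies to $G$ itself with $u=y$, giving $\depth S/(I(G)^k:x)=\min\{B^k,C^k\}$. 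This is exactly the paper's argument, and with it the rest of your proof goes through unchanged.

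Your leaf decomposition $I(G)^k=I(G-y)^k+xyI(G)^{k-1}$ can in fact be made an Eliahou--Kervaire splitting, which would give the stronger equality $\depth S/I(G)^k=\min\{A^k,B^k,C^k\}$. Take $\phi(xym)=m$ if $m\in I(G-y)^k$ (then $x\nmid m$), and $\phi(xym)=xm\in\mathcal{G}(I(G-y)^k)$ otherwise. Take $\psi(xym)=xyh$ with $h\in\mathcal{G}(I(G)^{k-1})$ dividing $m/z(m)$, using an order that lists $u_1,\dots,u_r$ first and $x$ last. Showing that $m/z(m)\in I(G)^{k-1}$ requires Lemma \ref{colon-lemma}(1) in $G-\{y,u_1,\dots,u_{i-1}\}$, as in Proposition \ref{EK-splitting}. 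That is substantially more than the map you proposed.
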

\begin{proof}
We use induction on $k$. The assertion is clear for $k=1$. Hence, we assume that $k\geq2$. Notice that $x$ is weakly simplicial in $G-\{y,u_{1},\ldots, u_{i}\}$ for any $0\leq i<r$. From Lemma \ref{depth ineq}, we have 
\begin{align*}
\depth S_{0}/(I(G-y)^{k}:x)
= \min\left\{
\begin{aligned}
&\min_{0\leq i<r}\{\depth S_{i}/I(G-\{y,u_{1},\ldots, u_{i}\})^{k-1}\},\\
&\depth K[V(G)\setminus N[x]]/I(G-N[x])^{k}+1.
\end{aligned}
\right\}.
\end{align*}
and 
\begin{align*}
\depth S_{0}/(I(G-y)^{(k)}:x)
\geq \min\left\{
\begin{aligned}
&\min_{0\leq i<r}\{\depth S_{i}/I(G-\{y,u_{1},\ldots, u_{i}\})^{(k-1)}\},\\
&\depth K[V(G)\setminus N[x]]/I(G-N[x])^{(k)}+1.
\end{aligned}
\right\}.
\end{align*}
Hence, from assumptions (1) and (2), we have $$\depth S_{0}/(I(G-y)^{(k)}:x)\geq\depth S_{0}/(I(G-y)^{k}:x).$$
Now, from \cite[Proposition 1.2 and Corollary 1.3]{r2010}, we obtain that 
$$\depth S/I(G)^{k}\leq\depth S_{0}/I(G-y)^{k}+1\mbox{ and }\depth S/I(G)^{k}\leq\depth S/(I(G)^{k}:x).$$
Since $N(y)=\{x\}\subset N(v)$ for any $v\in N(x)$, from Lemma \ref{depth ineq}, we see that 
$$\depth S/I(G)^{k}\leq\depth S/(I(G)^{k}:x)=\min\{\depth S/I(G)^{k-1},\depth S_{0}/(I(G-y)^{k}:x)\},$$
and thus, we get 
$$\depth S/I(G)^{k}\leq\min\{\depth S_{0}/I(G-y)^{k}+1,\depth S/I(G)^{k-1},\depth S_{0}/(I(G-y)^{k}:x)\}.$$
Therefore, from Lemma \ref{leaf-depth}, assumption (1), and the induction hypothesis, we conclude that 
\begin{align*}
\depth S/I(G)^{(k)}&\geq\min\{\depth S_{0}/I(G-y)^{(k)}+1,\depth S/I(G)^{(k-1)},\depth S_{0}/(I(G-y)^{(k)}:x)\} \\
&\geq\min\{\depth S_{0}/I(G-y)^{k}+1,\depth S/I(G)^{k-1},\depth S_{0}/(I(G-y)^{k}:x)\}\\
&\geq\depth S/I(G)^{k},
\end{align*}
which completes the proof. 
\end{proof}

Before applying Theorem \ref{comparison}, we show that the depth comparison is preserved under disjoint unions.  

\begin{lemma}\label{disconnected}
Let $G=G_{1}\sqcup G_{2}$ be a graph and set $S_{i}=K[V(G_{i})]$ for $i=1,2$. Suppose that $\depth S_{i}/I(G_{i})^{(k)}\geq\depth S_{i}/I(G_{i})^{k}$ for $i=1,2$ and all $k\geq1$. Then we have 
$$\depth S/I(G)^{(k)}\geq\depth S/I(G)^{k}\mbox{ for all }k\geq2.$$
\end{lemma}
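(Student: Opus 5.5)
We plan to reduce to known formulas for depth of powers and symbolic powers of sums of ideals in disjoint sets of variables. Write $I_i=I(G_i)\subset S_i$, so $I(G)=I_1S+I_2S$ with $S=S_1\otimes_K S_2$. For symbolic powers of squarefree monomial ideals in disjoint variables, the minimal primes of $I(G)$ are exactly $P_1+P_2$ with $P_i\in\Min(I_i)$. Hence $I(G)^{(k)}=\sum_{j=0}^{k}I_1^{(j)}I_2^{(k-j)}$, the binomial expansion familiar from the literature (Hà--Nguyen--Trung--Trung). For ordinary powers the analogous identity $I(G)^k=\sum_{j}I_1^jI_2^{k-j}$ is immediate. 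We will therefore invoke the depth formula for such binomial expansions (Hoa--Tam type, or Hà--Trung--Trung, \emph{Depth and regularity of powers of sums of ideals}):
$$\depth S/(I+J)^{k}\ \text{and}\ \depth S/(I+J)^{(k)}=\min_{\substack{i+j=k-1\\ i+j=k}}\Big\{\depth S_1/I^{i}+\depth S_2/J^{j}+1,\ \depth S_1/I^{i}+\depth S_2/J^{j}\Big\},$$
where in the first family $i,j\ge1$ with $i+j=k-1$, and in the second $i,j\ge1$ with $i+j=k$. The second formula holds with all powers replaced by symbolic powers, since the proof only uses that the filtrations $\{I^{(j)}\}$ and $\{J^{(j)}\}$ are decreasing and the binomial expansion above.

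Granting these formulas, the comparison is termwise. Every term in the symbolic formula is of the form $\depth S_1/I_1^{(i)}+\depth S_2/I_2^{(j)}(+1)$ with $i,j\ge1$. By hypothesis each summand dominates the corresponding ordinary term $\depth S_1/I_1^{i}$ or $\depth S_2/I_2^{j}$. Hence the minimum over the same index set is at least the ordinary minimum, giving $\depth S/I(G)^{(k)}\ge\depth S/I(G)^{k}$. The case $k=1$ is trivial, which explains the restriction $k\ge2$ in the statement.

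The main obstacle is justifying the symbolic version of the sum formula, and in particular the exact shape of the index ranges, so that the two formulas match term by term. If quoting it is problematic, we would instead derive it directly via Takayama's formula. The degree complex of a sum in disjoint variables is the join of degree complexes of the pieces, taken over the relevant splittings of the degree bound. So local cohomology of $S/I(G)^{(k)}$ in degree $\mathbf a=(\mathbf a_1,\mathbf a_2)$ is computed from $\Delta_{\mathbf a_1}(I_1^{(i)})$ and $\Delta_{\mathbf a_2}(I_2^{(j)})$ by the Künneth formula for joins. This yields the nonvanishing pattern, and hence the depth, in terms of the depths of the factors for both ordinary and symbolic powers simultaneously.
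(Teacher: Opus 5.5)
Your strategy is the same as the paper's. You cite the depth formula for powers of a sum of ideals in disjoint sets of variables (\cite[Theorem 1.1]{nv2019} for ordinary powers, \cite[Theorems 5.6 and 5.11]{hntt2020} for symbolic powers) and compare term by term. There is, however, a missing case: these formulas are about \emph{nonzero} proper ideals, and you never treat the situation where $G_1$ or $G_2$ has no edges. That case cannot be read off from the formula. If $I(G_1)=0$, the right-hand side collapses to $|V(G_1)|+\min_{1\le j\le k}\depth S_2/I(G_2)^{j}$ (resp.\ with symbolic powers), while the true value is $|V(G_1)|+\depth S_2/I(G_2)^{k}$. These agree only when the depth function of $I(G_2)$ attains its minimum over $[1,k]$ at $k$. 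Requiring this for all $k$ is, on the symbolic side, exactly the open Conjecture~\ref{conj of non-increasing}. The paper handles this case directly. $S/I(G)^{(k)}$ and $S/I(G)^{k}$ are obtained from $S_2/I(G_2)^{(k)}$ and $S_2/I(G_2)^{k}$ by adjoining $|V(G_1)|$ variables, so both depths shift by $|V(G_1)|$ and the hypothesis on $G_2$ finishes. The case is not vacuous: components consisting of isolated vertices do arise when the lemma is used in the proof of Theorem~\ref{unicyclic}.

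There are also two smaller corrections. First, your index ranges are off by one. The formula reads
$$\depth S/(I+J)^{k}=\min_{i\in[1,k-1],\,j\in[1,k]}\bigl\{\depth S_1/I^{k-i}+\depth S_2/J^{i}+1,\ \depth S_1/I^{k-j+1}+\depth S_2/J^{j}\bigr\},$$
so the terms with $+1$ have exponents summing to $k$ and the others to $k+1$. As you wrote it, the index set is empty for $k=1$. This is harmless for the termwise comparison, since the same index set appears on both sides. Second, the reasoning you give for the symbolic version (decreasing filtrations plus the binomial expansion) yields only the lower bound; equality is the actual content of the cited results in \cite{hntt2020}. For your purposes the lower bound suffices. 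You only need $\depth S/I(G)^{(k)}\ge\min(\text{symbolic terms})\ge\min(\text{ordinary terms})=\depth S/I(G)^{k}$, and equality is used only on the ordinary side.
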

\begin{proof}
If $I(G_{1})=0$, then, for every $k\geq1$, we have 
$$\depth S/I(G)^{(k)}=|V(G_{1})|+\depth S_{2}/I(G_{2})^{(k)}$$
and $$\depth S/I(G)^{k}=|V(G_{1})|+\depth S_{2}/I(G_{2})^{k},$$
which imply that the assertion follows from the assumption on $G_{2}$. Hence, we may assume that both $G_{1}$ and $G_{2}$ have at least one edge. From \cite[Theorem 1.1]{nv2019} and \cite[Theorems 5.6 and 5.11]{hntt2020}, we know that 
\begin{align*}
\depth S/I(G)^{k}
= \min_{i\in[1,k-1],j\in[1,k]}\left\{
\begin{aligned}
&\depth S_{1}/I(G_{1})^{k-i}+\depth S_{2}/I(G_{2})^{i}+1,\\
&\depth S_{1}/I(G_{1})^{k-j+1}+\depth S_{2}/I(G_{2})^{j}.
\end{aligned}
\right\}
\end{align*}
and 
\begin{align*}
\depth S/I(G)^{(k)}
= \min_{i\in[1,k-1],j\in[1,k]}\left\{
\begin{aligned}
&\depth S_{1}/I(G_{1})^{(k-i)}+\depth S_{2}/I(G_{2})^{(i)}+1,\\
&\depth S_{1}/I(G_{1})^{(k-j+1)}+\depth S_{2}/I(G_{2})^{(j)}.
\end{aligned}
\right\}.
\end{align*}
Hence, the assertion follows. 
\end{proof}

We conclude this paper with the following theorem, which proves Conjecture \ref{Conj by Muta} for graphs whose connected components are unicyclic or bipartite. 

\begin{thm}\label{unicyclic}
Let $G$ be a graph. Suppose that every connected component of $G$ is unicyclic or bipartite. Then we have 
$$\depth S/I(G)^{(k)}\geq\depth S/I(G)^{k}\mbox{ for all }k\geq1.$$
\end{thm}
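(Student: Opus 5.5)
We argue by induction on the number of vertices of $G$, proving the statement for the class $\mathcal{C}$ of graphs all of whose connected components are unicyclic or bipartite. This class is closed under taking induced subgraphs: an induced subgraph of a unicyclic graph has components that are either trees, hence bipartite, or contain the unique cycle, hence are unicyclic. Induced subgraphs of bipartite graphs are bipartite.

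\emph{Reduction to the connected case.} The case $k=1$ is trivial. If $G$ is disconnected, write $G=G_{1}\sqcup G_{2}$ with $G_{1},G_{2}\in\mathcal{C}$ having fewer vertices. The induction hypothesis applies to both, so Lemma \ref{disconnected} gives the inequality for all $k\geq2$. Hence we may assume $G$ is connected. If $G$ is bipartite, then $I(G)^{(k)}=I(G)^{k}$ by Simis--Vasconcelos--Villarreal, and equality holds. So let $G$ be connected, unicyclic and non-bipartite, whose unique cycle $C_{m}$ has odd length $m$.

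\emph{The pure cycle.} If $G=C_{m}$, we argue directly. For $C_{3}$, $I^{(k)}$ and $I^{k}$ have depth $0$ or $1$, and since $\depth S/I^{(k)}\geq 1$ trivially, the inequality holds. For general odd $m$, I would use Corollary \ref{deg2} and known formulas for the symbolic and ordinary depths of odd cycles; alternatively, cite the known computation of depth of powers of cycle edge ideals (Minh--Trung--Vu type formulas). This base case is the one genuine computational input I would need to take from the literature.

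\emph{$G$ with a tree attached.} Otherwise $G$ has a leaf. Choose a leaf $y$ at maximal distance from the cycle, with $N(y)=\{x\}$. By maximality, every neighbor of $x$ other than at most one vertex $w$ (the one toward the cycle) is a leaf. The exception is when $x$ itself lies on the cycle, in which case $x$ has two cycle neighbors and all its other neighbors are leaves.

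In the first case, $N(x)$ is independent: the leaves have neighborhood $\{x\}$, which is contained in $N(w)$, so $x$ is weakly simplicial. Order $N(x)\setminus\{y\}$ with the leaves first and $w$ last. The subgraphs $G-\{y,u_{1},\ldots,u_{i}\}$ and $G-N[x]$ lie in $\mathcal{C}$ and have fewer vertices. By induction and the polynomial-extension invariance of depth (adding free variables adds to both depths equally), conditions (1) and (2) of Theorem \ref{comparison} hold, and that theorem finishes the proof.

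In the second case, $x$ lies on the cycle. Its two cycle neighbors are adjacent only if $m=3$, which breaks independence of $N(x)$. For $m\geq5$ they are nonadjacent, but their neighborhoods need not be nested, so weak simpliciality fails in general.

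\emph{The main obstacle} is therefore the configuration where all pendant trees have depth one, rooted on cycle vertices. To handle it, I would first try choosing $x$ differently. If some cycle vertex has a pendant path of length $\geq2$, we are in the first case. Otherwise every leaf hangs directly on the cycle. Then I would instead apply the leaf decomposition of Lemma \ref{leaf-depth} together with its ordinary-power analogue, $I(G)^{k}=I(G-y)^{k}+xyI(G)^{k-1}$, whose colon $(I(G-y)^{k}:x)$ should still be computable by an Eliahou--Kervaire splitting along a cycle neighbor. Alternatively, I would use Corollary \ref{deg2}-style edges of degree two on the cycle to compare the two depth functions directly. I expect this final case, whiskered odd cycles with leaves only on the cycle, to require the most care.
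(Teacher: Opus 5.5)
\textbf{Gap: the case where every vertex lies within distance $1$ of the cycle is left unproved.}

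Your reduction steps match the paper: induction, the disconnected case via Lemma~\ref{disconnected}, and the bipartite case. Your treatment of the case where some vertex lies at distance $d\geq 2$ from the cycle also matches. There $x\notin V(C)$, all neighbors of $x$ except the one toward the cycle are leaves, $x$ is weakly simplicial, and Theorem~\ref{comparison} plus induction finishes.

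The remaining case is every vertex within distance $1$ of the cycle. It contains both the bare odd cycle and every odd cycle with leaves hanging only on cycle vertices. For it you offer only unspecified literature formulas for cycles and a speculative splitting at a cycle vertex. That is exactly where $x$ fails to be weakly simplicial, so Lemma~\ref{colon-lemma}, Proposition~\ref{EK-splitting} and Theorem~\ref{comparison} give you nothing. As written, the proof is incomplete.

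\textbf{The missing idea.} This case needs no splitting at all: the ordinary powers have depth $0$ as soon as they differ from the symbolic ones.

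Write $|V(C)|=2t+1$.
\begin{enumerate}
\item For $k\leq t$, the only odd cycle has length $2t+1\geq 2k+1$, so $I(G)^{(k)}=I(G)^{k}$ by \cite[Lemma 3.10]{rty2011}.
\item For $k\geq t+1$, fix an edge $\{a,b\}$ of $C$ and set $m=(ab)^{k-t-1}\prod_{v\in V(C)}v$. It has degree $2k-1$, so $m\notin I(G)^{k}$.
\item Every vertex $w$ has a neighbor $c\in V(C)$; this is exactly where "distance $\leq 1$" is used. The graph $C-c$ is a path on $2t$ vertices and has a perfect matching. Hence $wm=(wc)\cdot\bigl(\prod_{v\in V(C)\setminus\{c\}}v\bigr)\cdot(ab)^{k-t-1}\in I(G)^{1+t+(k-t-1)}=I(G)^{k}$.
\item So $m$ is a nonzero socle element of $S/I(G)^{k}$, giving $\depth S/I(G)^{k}=0$. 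The desired inequality is then trivial.
\end{enumerate}

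This argument also covers the pure odd cycle $C_{2t+1}$, so your separate base case relying on cycle-depth formulas is unnecessary.
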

\begin{proof}
We use strong induction on $n=|V(G)|$. If $G$ is bipartite, then, from \cite[Theorem 5.9]{svv1994}, we know that $I(G)^{(k)}=I(G)^{k}$ for all $k\geq1$. In particular, the assertion follows when $n\leq2$. Hence, from Lemma \ref{disconnected}, we may assume that $G$ is connected and non-bipartite. By assumption, $G$ has exactly one cycle $C$, and $C$ is odd. Write $|V(C)|=2t+1$.  
For $v,w\in V(G)$, let $\dist_{G}(v,w)$ be the minimum number of edges in a path joining $v$ and $w$. Set $\dist_{G}(v,V(C))=\min\{\dist_{G}(v,c)\,\,: c\in V(C)\}$. 

First, suppose $\max_{v\in V(G)}\dist_{G}(v,V(C))\leq1$. For $1\leq k\leq t$, from \cite[Lemma 3.10]{rty2011}, we know $I(G)^{(k)}=I(G)^{k}$. Let $k\geq t+1$, and take an edge $\{a,b\}\in E(C)$. Set 
$m=(ab)^{k-t-1}\prod_{v\in V(C)}v$. Since $\deg m=2k-1$, we see $m\notin I(G)^{k}$. Fix $w\in V(G)$. Take $c\in V(C)$ such that $\{w,c\}\in E(G)$. The induced subgraph $C|_{V(C)\setminus\{c\}}$ is a path with $2t$ vertices, and hence, there exist edges $\{a_{1},b_{1}\},\ldots,\{a_{t},b_{t}\}\in E(C)$ such that $\prod_{i}a_{i}b_{i}=\prod_{v\in V(C)\setminus\{c\}}v$, which implies that 
$$wm=(wc)\left(\prod_{i}a_{i}b_{i}\right)(ab)^{k-t-1}\in I(G)^{k}.$$
Therefore, we see that $\depth S/I(G)^{k}=0$, and thus, we obtain the desired inequality. 

Next, suppose that $d=\max\{\dist_{G}(v,V(C))\,\,: v\in V(G)\}\geq2$. Since $G$ is connected and $C$ is its unique cycle, deleting all edges of $C$ gives a forest whose connected components each contain exactly one vertex of $C$. By uniqueness of paths in a tree, every $v\notin V(C)$ has exactly one neighbor $w$ satisfying $\dist_{G}(w,V(C))=\dist_{G}(v,V(C))-1$. Moreover, every other neighbor $z\in N(v)\setminus\{w\}$ satisfies $\dist_{G}(z,V(C))=\dist_{G}(v,V(C))+1$. Take $y\in V(G)$ such that $\dist_{G}(y,V(C))=d$, and let $x\in N(y)$ with $\dist_{G}(x,V(C))=d-1$. Since $d\geq2$, we have $x\notin V(C)$. Set $r=|N(x)|-1\geq1$ and denote by $u_{r}$ the unique neighbor of $x$ with $\dist_{G}(u_{r},V(C))=d-2$. Notice that $u_{r}\neq y$. For every $z\in N(x)\setminus\{u_{r}\}$, we have $\dist_{G}(z,V(C))=d$. If $w\in N(z)\setminus\{x\}$, we have $\dist_{G}(w,V(C))=d+1$, a contradiction. Hence, we obtain that $N(z)=\{x\}$ for every $z\in N(x)\setminus\{u_{r}\}$. Write $N(x)\setminus\{y,u_{r}\}=\{u_{1},\ldots, u_{r-1}\}$. Then, we see that $N(y)=\{x\}$, $N(u_{i})=\{x\}\subset N(u_{r})$ for any $1\leq i<r$, and $N(x)$ is an independent set of $G$. Hence, $x$ is weakly simplicial and $N(u_{1})\subset\cdots\subset N(u_{r})$. Each connected component of $G-\{y,u_{1},\ldots, u_{i}\}$ and $G-N[x]$ contains at most one cycle and is therefore bipartite or unicyclic for $0\leq i<r$. Hence, by the induction hypothesis, we see that assumptions hold in Theorem \ref{comparison}, the assertion holds, as required. 
\end{proof}


\section*{Acknowledgement}
The research of Yuji Muta was partially supported by ohmoto-ikueikai and JST SPRING Japan Grant Number JPMJSP2126. The research of Naoki Terai was partially supported by JSPS Grant-in Aid for Scientific Research (C) 24K06670.



\begin{thebibliography}{99}

\bibitem{BHBook} W. Bruns and J. Herzog,
{\it Cohen--Macaulay Rings},
Cambridge Stud. Adv. Math., vol. 39, Cambridge Univ. Press, Cambridge, 1993.

\bibitem{ek1990} S. Eliahou and M. Kervaire, 
Minimal resolutions of some monomial ideals, 
{\it J. Algebra} {\bf 129} (1990), no. 1, 1--25.

\bibitem{f1982}
O. Favaron,
Very well covered graphs,
{\it Discrete Math.} {\bf 42} (1982), no. 2--3, 177--187.

\bibitem{fmm2026Rees} A. Ficarra, S. Moradi, and Y. Muta, 
Symbolic Rees algebras of complementary edge ideals, 
arXiv:2609.01165 (2026).

\bibitem{fmm2026reg-depth} A. Ficarra, S. Moradi, and Y. Muta, 
Regularity, depth and Serre depth of symbolic powers of complementary edge ideals, 
arXiv:2610.23224 (2026).

\bibitem{fhv2009} C.A. Francisco, H.T. Ha and A. Van Tuyl, 
Splittings of monomial ideals, 
{\it Proc. Amer. Math. Soc.} {\bf 137} (2009), no. 10, 3271--3282.

\bibitem{gv2005} I. Gitler and C.E. Valencia,
Bounds for invariants of edge-rings,
{\it Comm. Algebra} \textbf{33} (2005), 1603--1616.

\bibitem{hntt2020} H.T. Ha, H.D. Nguyen, N.V. Trung and T.N. Trung, 
Symbolic powers of sums of ideals, 
{\it Math. Z.} {\bf 294} (2020), 1499--1520.

\bibitem{HHBook} J. Herzog and T. Hibi, 
\emph{Monomial ideals}, 
Graduate texts in Mathematics {\bf 260}, Springer, 2011.

\bibitem{hlt2024} H.T.T. Hien, H.M. Lam, and N.V. Trung,
Decreasing behavior of the depth functions of edge ideals,
{\it J. Algebraic Combin.} {\bf 59} (2024), no. 1, 37--53.

\bibitem{hktt2017} L.T. Hoa, K. Kimura, N. Terai and T.N. Trung, 
Stability of depths of symbolic powers of Stanley--Reisner ideals, 
{\it J. Algebra} {\bf 473} (2017), 307--323. 

\bibitem{ht2010} L.T. Hoa and T.N. Trung,
Partial Castelnuovo--Mumford regularities of sums and intersections of powers of monomial ideals,
{\it Math. Proc. Cambridge Philos. Soc.} {\bf 149} (2010), no. 2, 229--246. 

\bibitem{kty2018}
K. Kimura, N. Terai and S. Yassemi,
The projective dimension of the edge ideal of a very well-covered graph,
{\it Nagoya Math. J.} {\bf 230} (2018), 160--179.

\bibitem{mtv2024} N.C. Minh, T.N. Trung and T. Vu, 
Stable value of depth of symbolic powers of edge ideals of graphs, 
{\it Pacific J. Math.} {\bf 329} (2024), 147--164. 

\bibitem {mv2012} S. Morey and R.H. Villarreal, 
{\it Edge ideals: algebraic and combinatorial properties}, Progress in commutative algebra 1, 85--126. Walter de Gruyter \& Co., Berlin, 2012


\bibitem{m2026} Y. Muta, 
Comparisons between ordinary and symbolic powers of edge ideals with respect to regularity and depth, 
arXiv:2609.05945 (2026).

\bibitem{mt2025} Y. Muta and N. Terai, 
The Serre depth of Stanley--Reisner rings and the depth of their symbolic powers,
arXiv: 2509.14838. (2025).

\bibitem{mt2026} Y. Muta and N. Terai, 
Algebraic study on rooted products of graphs and multi-clique corona graphs, 
{\it J. Pure Appl. Algebra} {\bf 230} (2026), no. 8. 

\bibitem{nt2019} H.D. Nguyen and N.V. Trung, 
Depth functions of symbolic powers of homogeneous ideals, 
{\it Invent. math.} {\bf 218} (2019), 779--827. 

\bibitem{nv2019} H.D. Nguyen and T. Vu,
Powers of sums and their homological invariants,
{\it J. Pure Appl. Algebra} {\bf 223} (2019), no. 7, 3081--3111.

\bibitem{r2010} A. Rauf, 
Depth and Stanley depth of multigraded modules,
{\it Comm. Algebra} {\bf 38} (2010), 773--784.

\bibitem{rty2011} G. Rinaldo, N. Terai and K. Yoshida,
Cohen--Macaulayness for symbolic power ideals of edge ideals,
{\it J. Algebra} {\bf 347} (2011), 1--22.

\bibitem{sf2017}
S.A. Seyed Fakhari,
Stanley depth and symbolic powers of monomial ideals,
{\it Math. Scand.} {\bf 120} (2017), no. 1, 5--16.

\bibitem{svv1994} A. Simis, W.V. Vasconcelos and R.H. Villarreal,
On the ideal theory of graphs, 
{\it J. Algebra} {\bf 167} (1994), 389--416.

\bibitem{t2005} Y. Takayama,
Combinatorial characterizations of generalized Cohen--Macaulay monomial ideals,
{\it Bull. Math. Soc. Sci. Math. Roumanie (N.S.)}
{\bf 48(96)} (2005), no. 3, 327--344.

\bibitem{v2011} M. Varbaro, 
Symbolic powers and matroids, 
{\it Proc. Amer. Math. Soc.} {\bf 139} (2011), 2357--2366. 

\bibitem{v1990} R.H. Villarreal, 
Cohen--Macaulay graphs, 
{\it Manuscripta Math.} {\bf 66} (1990), no. 3, 277--293. 

\bibitem{VBook} R. H. Villarreal,
\emph{Monomial Algebras}, 3rd edition,
CRC Press, Boca Raton, FL, 2026.

\end{thebibliography}
\end{document}